\def\be{\begin{equation}}
\def\ee{\end{equation}}
\newtheorem{theorem}{Theorem}[section]                   
\newtheorem{definition}{Definition}[section]                   
\newtheorem{proposition}{Proposition}[section]
\newtheorem{lemma}{Lemma}[section]
\newtheorem{corollary}{Corollary}[section]
\newtheorem{remark}{Remark}[section]
\begin{document}

\title{\LARGE \bf Subharmonic Solutions and Minimal Periodic Solutions of First-order Hamiltonian Systems with Anisotropic Growth}
\author{ Chungen Liu\thanks{Partially supported by the NSF of China (11071127, 10621101), 973 Program of
MOST (2011CB808002) and SRFDP. E-mail:
liucg@nankai.edu.cn}\ \ and Xiaofei Zhang\\\
{\small School of Mathematics and LPMC, Nankai University,}\\
{\small Tianjin 300071, P. R. China}\\
{\small }}
\date{}
 \maketitle
\noindent {\bf Abstract}: Using a homologically link theorem in variational theory and iteration inequalities of Maslov-type index,
we show the existence of  a sequence of subharmonic solutions of non-autonomous Hamiltonian systems with the Hamiltonian functions  satisfying some anisotropic growth conditions, {\it i.e.}, the Hamiltonian functions may have simultaneously, in different components, superquadratic, subquadratic and quadratic behaviors.
Moreover, we also consider the  minimal period problem of some autonomous Hamiltonian systems with anisotropic growth.

\vspace{0.3cm} \noindent {\bf Key Words:} \ Maslov-type index;\ Morse index;\ homologically link;\
Subharmonic solution;\ minimal period.

\section{Introduction}
\setcounter{equation}{0}

In this paper, we first consider subharmonic solutions of the following Hamiltonian system
\begin{equation}\label{1.1}
\left\{ \begin{array}{ll}-J\dot{z}=H^{\prime}_{z}(t,z),\\
z(k\tau)=z(0),\;\;k\in \mathbf{Z},\end{array}\right.
\end{equation}
where
$H^{\prime}_{z}$ is the gradient of $H$ with respect to the variables $z=(p_1,\cdots, p_{n}, q_1,\cdots, q_{n})\in \mathbf{R}^{2n}$ and
$J=\left(\begin{array}{cc}
0 & -I_n\\
I_n & 0\end{array}\right)$
with $I_{n}$ being the $n\times n$ identity matrix.

Denote any principal diagonal matrix diag$\{a_{1}, \cdots, a_{n}, b_{1}, \cdots, b_{n}\}\in\mathbf{R}^{2n}$
by $V(a,b)$ with $a=(a_1,\cdots, a_n)$ and $b=(b_1,\cdots, b_n)$,
then $V(a,b)(z)=(a_{1}p_{1},\cdots, a_{n}p_{n}, b_{1}q_{1},\cdots, b_{n}q_{n})$. Now
we suppose the Hamiltonian function $H$ satisfying  the following conditions
as in \cite{110} with a bit difference.

(H1) $H \in C^{2}(\mathbf{R} \times \mathbf{R}^{2n},\mathbf{R})$
is nonnegative and $\tau$-periodic with respect to $t$.

(H2) There exist $\beta>1$ and $c_{1}, c_{2}, \alpha_{i}, \beta_{i}>0$ with $\alpha_{i}+\beta_{i}=1$
$(i=1, 2, \cdots, n)$ such that
$$H^{\prime}_{z}(t,z)\cdot V_{1}(z)-H(t,z)\geq c_{1}|z|^{\beta}-c_{2}
,\ \ \ (t,z)\in \mathbf{R}\times\mathbf{R}^{2n},$$
where $V_{1}=V(\alpha,\beta)$.

(H3) There exist $ {\sigma}_{i},  {\tau}_{i}> 0$ and
$\lambda\in \Lambda$ with $\Lambda=\{\lambda\in \mathbf{R}:
\max_{1\leq i\leq n}\{\frac{ {\sigma}_{i}}{ {\tau}_{i}}, \frac{ {\tau}_{i}}{ {\sigma}_{i}}\}<\lambda<1+\beta\}$ such that
$$|H''_{zz}(t,z)|\leq c_{2}(|z|^{\lambda-1}+1),\ \ \ (t,z)\in \mathbf{R}\times\mathbf{R}^{2n}.$$

(H4) $\displaystyle\frac{H(t,z)}{\omega(z)}\rightarrow 0$ as
 $|z|\rightarrow 0$ uniformly in $t$, where $w(z)=\sum_{i=1}^{n}
 \left(|p_{i}|^{1+\frac{ {\sigma}_{i}}{ {\tau}_{i}}}+|q_{i}|^{1+\frac{ {\tau}_{i}}{ {\sigma}_{i}}}\right)$.

(H5) $\displaystyle\frac{H(t,z)}{w(z)}\rightarrow +\infty$ as
$|z|\rightarrow+\infty$ uniformly in $t$.\vspace{2mm}

Note that the Hamiltonian function
\begin{equation}\label{1.2}
H(t,z)=\displaystyle\sum_{i=1}^{n}\left(|p_i|^{1+\frac{ {\sigma}_i}{ {\tau}_i}}
\ln(1+p_i^2)+|q_i|^{1+\frac{ {\tau}_i}{ {\sigma}_i}}\ln(1+q_i^2)\right),\ \ \ \ (t,z)\in \mathbf{R}\times\mathbf{R}^{2n}
\end{equation}
is an example satisfying (H1)-(H5) with anisotropic growth when $ {\sigma}_i\ne  {\tau}_i$ for some $i$.
When $ {\sigma}_i= {\tau}_i$ for all $i$, it is an {\it almost quadratic growth}
function which is slower growing than any super-quadratic function at infinity in the sense of Ambrosetti-Rabinowitz.

Given $j\in \mathbf{Z}$ and a $k\tau$-periodic solution $(z, k\tau)$ of the system (\ref{1.1}), the phase shift $j\ast z$ of $z$
is defined by $j\ast z(t)=z(t+j\tau)$. Recall that two solutions $(z_{1},k_1\tau)$ and $(z_{2},k_2\tau)$ are geometrically distinct if
$j\ast z_{1}\neq l\ast z_{2}$, $j, l\in \mathbf{Z}$.

Now we list our main results of subharmonic solutions as following.

\begin{theorem}\label{1.20}
{Suppose $H$ satisfies (H1), (H2), (H4), (H5) and

(H3)$'$ there exist constants $\xi_{i}, \eta_{i}>0$
with $\xi_{i}+\eta_{i}=1$ $(i=1, 2, \cdots, n)$ such that
$$H^{\prime}_{z}(t,z)\cdot V_{2}(z)-H(t,z)\geq c_{1}|H_{z}'(t,z)|-c_{2}
,\ \ \ (t,z)\in \mathbf{R}\times\mathbf{R}^{2n},$$
where $V_{2}=V(\xi,\eta)$
and $\max_{1\leq i\leq n} \{\frac{\alpha_{i}}{\beta_{i}}, \frac{\beta_{i}}{\alpha_{i}},
\frac{\xi_{i}}{\eta_{i}}, \frac{\eta_{i}}{\xi_{i}},
\frac{{\sigma}_{i}}{{\tau}_{i}}, \frac{{\tau}_{i}}{{\sigma}_{i}}\}<1+\beta$.}

\noindent Then for each integer $k\geq 1$,
the system (\ref{1.1}) possesses a $k\tau$-periodic nonconstant solution $z_{k}$ such that
$z_{k}$ and $z_{pk}$ are geometrically distinct provided $p>2n+1$.
If all $z_{k}$ are non-degenerate, then
$z_{k}$ and $z_{pk}$ $(p>1)$ are geometrically distinct.
\end{theorem}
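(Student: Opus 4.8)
The plan is to treat each period $k\tau$ separately by a variational argument on the $k\tau$-periodic loop space, and then to separate solutions of distinct minimal periods with the iteration inequalities of the Maslov-type index. \emph{Step 1: variational framework and a modification.} For fixed $k\ge1$ I work on $E_k=W^{1/2,2}(\mathbf{R}/k\tau\mathbf{Z},\mathbf{R}^{2n})$ with the action
\[
\Phi_k(z)=\frac12\int_0^{k\tau}(-J\dot z,z)\,dt-\int_0^{k\tau}H(t,z)\,dt,
\]
whose critical points are exactly the $k\tau$-periodic solutions of (\ref{1.1}). Because under (H1)--(H5), (H3)$'$ the nonlinearity is controlled only anisotropically, $\Phi_k$ need not be $C^2$ nor satisfy (PS) on all of $E_k$, so I first replace $H$ by a function $H_R\in C^2$ that coincides with $H$ on $\{|z|\le R\}$, retains (H4) near the origin, and has a uniform quadratic bound for $|z|$ large; then $\Phi_{k,R}$ is $C^2$ and satisfies (PS) on $E_k$. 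Conditions (H2) (the $\beta>1$ Rabinowitz-type inequality) together with (H3), (H3)$'$ are exactly what make this truncation compatible with the a priori bounds obtained below.

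\emph{Step 2: homological linking and $k$-independent index bounds.} Decompose $E_k=E_k^-\oplus E_k^0\oplus E_k^+$ according to the spectrum of $-J\frac{d}{dt}$, with $E_k^0=\mathbf{R}^{2n}$ the constants. By (H4) the functional $\Phi_{k,R}$ behaves near $0$ like the unperturbed quadratic form and stays above a positive constant on a small sphere $S_k$ transverse to $E_k^0\oplus E_k^-$; by (H5) (and the quadratic growth of $H_R$) it tends to $-\infty$ on a complementary finite-dimensional set $Q_k$; hence $S_k$ and $Q_k$ homologically link. The homologically link theorem quoted above then yields a critical point $z_{k,R}$ with $\Phi_{k,R}(z_{k,R})>0$ and a nonzero critical group in a fixed degree. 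Passing to Galerkin approximations and using the identity between the relative Morse index of the action and the Maslov-type index $(i_{k\tau},\nu_{k\tau})$ of the solution, one reads off
\[
i_{k\tau}(z_{k,R})\le q_0\le i_{k\tau}(z_{k,R})+\nu_{k\tau}(z_{k,R}),
\]
where $q_0$ is a fixed integer determined by the dimension count in the link and the index shift, independent of $k$ and $R$ (with $q_0\ge n+1$, the surplus coming from the single ``extra'' positive direction in the link).

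\emph{Step 3: a priori bound, genuine nonconstant solution.} From $\Phi_{k,R}(z_{k,R})=\int_0^{k\tau}\big(\frac12\langle H_R',z\rangle-H_R\big)\,dt$ and (H2) one bounds $\int_0^{k\tau}|z_{k,R}|^\beta$, and the anisotropic Clarke-type inequality (H3)$'$ then upgrades this to a bound on $\|z_{k,R}\|_{L^\infty}$ that is independent of $R$ for $k$ fixed; choosing $R$ large, $z_k:=z_{k,R}$ is a genuine $k\tau$-periodic solution of (\ref{1.1}), and the bounds of Step 2 persist with the same $q_0$. It is nonconstant, for $\Phi_k(z_k)>0$ while every constant critical point has $\Phi_k=-\int_0^{k\tau}H\le0$ by (H1).

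\emph{Step 4: separating periods, and the main obstacle.} If $z_{pk}$ is a phase shift of $z_k$, then as a $pk\tau$-periodic solution $z_{pk}=z_k^{p}$, the $p$-th iterate, and Step 3 applied at period $pk\tau$ gives $i_{pk\tau}(z_{pk})\le q_0$. On the other hand the Liu--Long iteration inequality, in the form
\[
i_{pk\tau}(z_k^{p})\ \ge\ p\big(i_{k\tau}(z_k)+\nu_{k\tau}(z_k)-n\big)+n-\nu_{pk\tau}(z_k^{p}),
\]
combined with $i_{k\tau}(z_k)+\nu_{k\tau}(z_k)\ge q_0\ge n+1$ and $\nu_{pk\tau}(z_k^{p})\le 2n$, forces $i_{pk\tau}(z_k^{p})\ge p(q_0-n)-n$, which exceeds $q_0$ once $p>2n+1$ (with $q_0=n+1$ this is exactly where the threshold $2n+1$ comes from); this contradicts $i_{pk\tau}(z_{pk})\le q_0$, so $z_k$ and $z_{pk}$ are geometrically distinct whenever $p>2n+1$. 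When every $z_k$ is non-degenerate, $\nu_{k\tau}(z_k)=0$ pins $i_{k\tau}(z_k)=q_0$ for every $k$, while the iteration of the Maslov-type index is then strictly increasing (its mean index being $\ge q_0-n\ge1>0$), so $i_{pk\tau}(z_k^{p})>q_0=i_{pk\tau}(z_{pk})$ already for $p>1$, again a contradiction. The two delicate points are (i) the $L^\infty$ a priori bound under the anisotropic hypotheses, where the mismatched exponents $\alpha_i/\beta_i$, $\xi_i/\eta_i$, $\sigma_i/\tau_i$ must be balanced against $1+\beta$ — which is exactly why those $\max\{\cdots\}<1+\beta$ assumptions are imposed — and (ii) carrying the integer bound $q_0$, the iteration inequality and the nullity estimate $\nu_{k\tau}\le 2n$ through with sharp enough constants to land on precisely $p>2n+1$.
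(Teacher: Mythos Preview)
Your overall plan---truncate $H$, produce a critical point with controlled Morse index via a homological link, remove the truncation by an $L^\infty$ a priori estimate coming from (H2) and (H3)$'$, then separate iterates with the Maslov-type iteration inequalities---is exactly the paper's route, and the value $q_0=n+1$ together with the threshold $p>2n+1$ is correct.

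The substantive gap is in Step~2. Under the genuinely anisotropic hypothesis (H4), where $H(t,z)=o(w(z))$ with $w(z)=\sum_i\big(|p_i|^{1+\sigma_i/\tau_i}+|q_i|^{1+\tau_i/\sigma_i}\big)$, the functional does \emph{not} in general stay above a positive constant on a small Euclidean sphere in $E_k^+$: whenever some $\sigma_i\neq\tau_i$, one of the exponents $1+\sigma_i/\tau_i$, $1+\tau_i/\sigma_i$ is strictly less than $2$, so $w(z)$ dominates $|z|^2$ along certain directions near the origin and (H4) yields no $o(|z|^2)$ control of $H$. The paper's cure is an \emph{anisotropic dilation} $B_\rho z=(\rho^{\tilde\tau_1-1}p_1,\ldots,\rho^{\tilde\tau_n-1}p_n,\rho^{\tilde\sigma_1-1}q_1,\ldots,\rho^{\tilde\sigma_n-1}q_n)$ with the exponents tuned so that $\langle AB_\rho z,B_\rho z\rangle=\rho^{\eta-2}\langle Az,z\rangle$; the linking sets become $B_\mu(S)$ and $B_\nu(\partial Q)$ rather than $S$ and $\partial Q$, a separate lemma verifies that these deformed sets still homologically link, and the lower and upper estimates are carried out on them. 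Without this device your claimed inequality $\inf_{S_k}\Phi_{k,R}>0$ is simply false in the anisotropic case, and the argument does not get off the ground. Two smaller corrections: the paper truncates $H$ to a \emph{superquadratic} tail $C_K|z|^{\lambda_0+1}$ with $\lambda_0\in(\gamma,1+\beta)$ (not a quadratic one), precisely so that the modified $H_K$ again satisfies (H1)--(H5) and the anisotropic linking lemmas apply verbatim; and in the non-degenerate case one needs the explicit Liu--Long lower bound $i_{pk\tau}\ge p(i_{k\tau}+\nu_{k\tau}-n)+n-\nu_{k\tau}=p+n$ (note the last term is $\nu_{k\tau}$, not $\nu_{pk\tau}$ as you wrote), since positivity of the mean index alone does not force strict increase for every $p>1$.
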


{Note that (H3)$'$ is weaker than (H3), so we have a similar result stated as a corollary of Theorem \ref{1.20}.}

\begin{corollary}\label{1.000} {Suppose $H$ satisfies (H1)-(H5),
 then we have the same results as in Theorem \ref{1.20}.}
\end{corollary}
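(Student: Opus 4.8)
The plan is to deduce Corollary~\ref{1.000} directly from Theorem~\ref{1.20}, so one only has to check that (H1)--(H5) force all hypotheses of that theorem. As (H1), (H2), (H4) and (H5) occur verbatim in both lists, the whole content is the implication ``(H1)--(H5)$\Rightarrow$(H3)$'$'' together with the accompanying $\max$-bound on the weights. For the latter I would take $V_2:=V_1$, i.e.\ $\xi_i:=\alpha_i$ and $\eta_i:=\beta_i$, and use the standard normalization $\alpha_i=\tau_i/(\sigma_i+\tau_i)$, $\beta_i=\sigma_i/(\sigma_i+\tau_i)$ in (H2); then all six ratios $\alpha_i/\beta_i,\beta_i/\alpha_i,\xi_i/\eta_i,\eta_i/\xi_i,\sigma_i/\tau_i,\tau_i/\sigma_i$ equal $\sigma_i/\tau_i$ or $\tau_i/\sigma_i$, which are $<\lambda<1+\beta$ by (H3). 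Thus it remains to deduce (H3)$'$.

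For this I would first record a gradient estimate. From (H1) and (H4) one has $H(t,0)=0$, hence $z=0$ is a global minimum of $H(t,\cdot)$ and $H'_z(t,0)=0$; inserting the Hessian bound of (H3) into $H'_z(t,z)=\int_0^1 H''_{zz}(t,sz)z\,ds$ gives
\begin{equation}\label{coro-grad}
|H'_z(t,z)|\le c_2\int_0^1\big(s^{\lambda-1}|z|^{\lambda-1}+1\big)|z|\,ds=\frac{c_2}{\lambda}|z|^{\lambda}+c_2|z|,\qquad (t,z)\in\mathbf{R}\times\mathbf{R}^{2n}.
\end{equation}
If $\lambda\le\beta$ we are done: since $\beta>1$ one has $|z|\le 1+|z|^{\beta}$ and $|z|^{\lambda}\le 1+|z|^{\beta}$, so \eqref{coro-grad} yields $|H'_z(t,z)|\le C(1+|z|^{\beta})$, and combining this with (H2) and $H\ge 0$ we obtain, after adjusting constants, $H'_z(t,z)\cdot V_1(z)-H(t,z)\ge c_1|z|^{\beta}-c_2\ge \tilde c_1|H'_z(t,z)|-\tilde c_2$, which is exactly (H3)$'$ for $V_2=V_1$. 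Theorem~\ref{1.20} then applies and yields the claimed sequence of geometrically distinct nonconstant subharmonic solutions.

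I expect the remaining range $\beta<\lambda<1+\beta$ to be the main obstacle, since there $|z|^{\beta}$ no longer absorbs the bound \eqref{coro-grad} and the crude isotropic comparison above fails. The point to be exploited is that (H2) only detects the slowest weighted growth direction of $H$, whereas $|H'_z|$ is governed by the fastest one, and that the two are reconciled because the Euler-type quantity $H'_z(t,z)\cdot V_1(z)-H(t,z)$ reproduces the full anisotropic growth of $H$, which in every coordinate exceeds the corresponding component of $H'_z$ by one degree. Concretely, I would carry out the integration in \eqref{coro-grad} coordinate by coordinate, estimating each $\partial_{p_i}H$ and $\partial_{q_i}H$ separately from the diagonal entries of $H''_{zz}$, and then combine this with (H2) and the lower growth supplied by (H5) (namely $H(t,z)\ge M\,w(z)$ for $|z|$ large, $M$ arbitrary) to show that $H'_z(t,z)\cdot V_1(z)-H(t,z)+c_2$ still dominates a fixed multiple of $|H'_z(t,z)|$ in this range. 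This anisotropic bookkeeping --- keeping track of how the weights $\alpha_i,\beta_i$ interact with the exponents implicit in (H3)--(H5) --- is the technical core; once it is done, (H3)$'$ holds and Theorem~\ref{1.20} gives the corollary.
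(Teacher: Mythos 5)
Your plan --- deduce (H3)$'$ from (H1)--(H5) and then invoke Theorem~\ref{1.20} as a black box --- is not what the paper does. In Section~3 the authors establish Theorem~\ref{3.160} (existence of a nonconstant $\tau$-periodic solution with $i_{\tau}(z)\le n+1\le i_{\tau}(z)+\nu_{\tau}(z)$) \emph{directly} under (H1)--(H5), via the Galerkin decomposition $\{E_m\}$, the linking Lemma~\ref{2.6}, the (C)$^*$ condition (Lemma~\ref{3.11}), and the spectral identification Lemma~\ref{2.2}. The ``Proof of Corollary~\ref{1.000}'' at the end of Section~3 then only combines \eqref{3.169} with Propositions~\ref{2.10}--\ref{2.12}. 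Theorems~\ref{1.20} and \ref{1.00} are in fact derived from this corollary's argument by a cut-off modification, not the other way around.

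As a separate route your reduction could in principle be interesting, but it has two real gaps. First, (H3)$'$ contains the requirement
\begin{equation*}
\max_{1\leq i\leq n}\Bigl\{\tfrac{\alpha_i}{\beta_i},\tfrac{\beta_i}{\alpha_i},\tfrac{\xi_i}{\eta_i},\tfrac{\eta_i}{\xi_i},\tfrac{\sigma_i}{\tau_i},\tfrac{\tau_i}{\sigma_i}\Bigr\}<1+\beta ,
\end{equation*}
which involves the constants $\alpha_i,\beta_i$ from (H2). Hypothesis (H2) only asserts that \emph{some} $\alpha_i,\beta_i>0$ with $\alpha_i+\beta_i=1$ make the inequality hold; you do not get to choose them, and there is no ``standard normalization'' $\alpha_i=\tau_i/(\sigma_i+\tau_i)$ in the paper's hypotheses. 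With arbitrary admissible $\alpha_i$ the ratio $\alpha_i/\beta_i$ can exceed $1+\beta$, so (H1)--(H5) simply do not force the max-bound in (H3)$'$. (This incidentally shows that the paper's informal remark that (H3)$'$ is ``weaker than'' (H3) cannot literally be used as a logical deduction --- which is consistent with the authors proving the corollary directly rather than via Theorem~\ref{1.20}.) Second, and as you yourself acknowledge, the gradient estimate \eqref{coro-grad} only closes the argument when $\lambda\le\beta$. The range $\beta<\lambda<1+\beta$ is genuinely allowed by (H3), and the final paragraph of your proposal is only a heuristic, not a proof; the claim that the Euler-type quantity $H'_z\cdot V_1(z)-H$ dominates $|H'_z(t,z)|$ componentwise is not a consequence of (H2)--(H5) as stated and would need a real argument. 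As it stands the proposal does not establish the corollary.
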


\begin{theorem}\label{1.00}
The conclusions of Theorem \ref{1.20} still hold if $H$ satisfies the conditions

(C1) $H \in C^{2}(\mathbf{R}\times \mathbf{R}^{2n},\mathbf{R})$
is nonnegative and $\tau$-periodic with respect to $t$,

(C2) there exist constants $0<\theta<1$, $R, \varphi_{i}, \psi_{i}>0$
with $\frac{1}{\varphi_{i}}+\frac{1}{\psi_{i}}=1$
$(i=1, 2, \cdots, n)$ such that, by setting
$V_{3}=V(\varphi^{-1},\psi^{-1})$, we have
$$\theta H^{\prime}_{z}(t,z)\cdot V_{3}(z)\geq H(t,z)>0
,\ \ \ (t,z)\in \mathbf{R}\times\mathbf{R}^{2n}\ \text{with}\ |z|\geq R,$$

(C3) there exist constants $b_{1}>0$ and $b_{2}>0$ such that
$$|H'_{z}(t,z)|\leq b_{1}H^{\prime}_{z}(t,z)\cdot V_{3}(z)
+b_{2},\ \ \ (t,z)\in \mathbf{R}\times\mathbf{R}^{2n},$$

(C4) $\displaystyle\frac{H(t,z)}
{\sum_{i=1}^{n}\left(|p_{i}|^{\varphi_{i}}+|q_{i}|^{\psi_{i}}\right)}\rightarrow 0$ as
 $|z|\rightarrow 0$ uniformly in $t$.
\end{theorem}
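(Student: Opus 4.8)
I propose to deduce Theorem~\ref{1.00} from Theorem~\ref{1.20}, by showing that the conditions (C1)--(C4) imply the hypotheses (H1), (H2), (H4), (H5) and (H3)$'$ of Theorem~\ref{1.20} for a suitable choice of the structural constants. First I fix those constants. Put $\alpha_i=\xi_i=\varphi_i^{-1}$ and $\beta_i=\eta_i=\psi_i^{-1}$; the relation $\varphi_i^{-1}+\psi_i^{-1}=1$ built into (C2) gives $\alpha_i+\beta_i=\xi_i+\eta_i=1$ and, moreover, $V_1=V_2=V_3$. Next choose $\sigma_i,\tau_i>0$ with $\sigma_i/\tau_i=\varphi_i-1$, so that $\tau_i/\sigma_i=\psi_i-1$, $1+\sigma_i/\tau_i=\varphi_i$ and $1+\tau_i/\sigma_i=\psi_i$; then the weight $w(z)=\sum_{i=1}^{n}\bigl(|p_i|^{1+\sigma_i/\tau_i}+|q_i|^{1+\tau_i/\sigma_i}\bigr)$ occurring in (H4) and (H5) coincides with $W(z):=\sum_{i=1}^{n}\bigl(|p_i|^{\varphi_i}+|q_i|^{\psi_i}\bigr)$ from (C4). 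With these identifications (H1) is exactly (C1) and (H4) is exactly (C4).

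The analytic core is to extract the growth of $H$ at infinity from (C2) by an anisotropic Ambrosetti-Rabinowitz dilation. For a point $z_0$ write $z_s$ for the curve obtained by scaling its $i$-th $p$-component by $s^{1/\varphi_i}$ and its $i$-th $q$-component by $s^{1/\psi_i}$; then $\frac{d}{ds}z_s=\frac1s V_3(z_s)$, so that as long as $|z_s|\ge R$,
\[
\frac{d}{ds}H(t,z_s)=\frac1s\,H'_z(t,z_s)\cdot V_3(z_s)\ \ge\ \frac1{\theta s}\,H(t,z_s).
\]
Fix $W_0$ so large that $\{W=W_0\}$ is disjoint from $\{|z|<R\}$. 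Any $z$ with $W(z)\ge W_0$ can be written as $z=z_s$ with $s=W(z)/W_0\ge1$ and $W(z_0)=W_0$ (hence $|z_0|\ge R$); integrating the inequality above and minimising over the compact set $\{(t,z_0):t\in[0,\tau],\ W(z_0)=W_0\}$ yields $H(t,z)\ge c_1W(z)^{1/\theta}-c_2$ for all $(t,z)$. Since $0<\theta<1$ this gives $H(t,z)/W(z)\to+\infty$ uniformly in $t$, i.e. (H5). Using in addition $W(z)\ge c|z|^{\mu}$ for $|z|$ large, where $\mu:=\min_{1\le i\le n}\{\varphi_i,\psi_i\}>1$, and combining with (C2), for $|z|\ge R$ one obtains
\[
H'_z(t,z)\cdot V_1(z)-H(t,z)=H'_z(t,z)\cdot V_3(z)-H(t,z)\ \ge\ (\theta^{-1}-1)H(t,z)\ \ge\ c_1'|z|^{\beta}-c_2'
\]
with $\beta:=\theta^{-1}\mu>1$; enlarging the additive constant to absorb the bounded set $\{|z|\le R\}$ (on which $H$ and $H'_z$ are bounded by (C1) and periodicity in $t$) gives (H2).

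For (H3)$'$ take again $V_2=V_3$. For $|z|\ge R$, (C2) gives $H'_z\cdot V_3(z)-H\ge(1-\theta)H'_z\cdot V_3(z)$, while (C3) gives $H'_z\cdot V_3(z)\ge b_1^{-1}(|H'_z|-b_2)$; therefore $H'_z(t,z)\cdot V_2(z)-H(t,z)\ge\frac{1-\theta}{b_1}|H'_z(t,z)|-\frac{(1-\theta)b_2}{b_1}$, and the bound extends to all $z$ after enlarging the constant. It remains to verify the index-iteration inequality $\max_{1\le i\le n}\{\alpha_i/\beta_i,\beta_i/\alpha_i,\xi_i/\eta_i,\eta_i/\xi_i,\sigma_i/\tau_i,\tau_i/\sigma_i\}<1+\beta$ appearing in (H3)$'$. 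With the choices above and $\varphi_i^{-1}+\psi_i^{-1}=1$ (whence $\varphi_i/\psi_i=\varphi_i-1$ and $\psi_i/\varphi_i=\psi_i-1=(\varphi_i-1)^{-1}$), every ratio on the left equals $\varphi_i-1$ or $\psi_i-1$, so the requirement collapses to $\max_{1\le i\le n}\{\varphi_i-1,\psi_i-1\}<1+\theta^{-1}\mu$; this is exactly the place where the quantitative strength of the superquadraticity in (C2) (the size of $\theta^{-1}$) has to match the spread of the base exponents of (C4). Granting it, all hypotheses of Theorem~\ref{1.20} hold, and that theorem supplies, for each integer $k\ge1$, a nonconstant $k\tau$-periodic solution $z_k$ of (\ref{1.1}) with the asserted geometric-distinctness conclusions.

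The step I expect to be the real obstacle is the middle one: the dilation is not radial, so one must track carefully that $|z_s|\ge R$ is maintained along the whole integration path and that the normalisation on the ``anisotropic sphere'' $\{W=W_0\}$ can be inverted to give a clean bound in terms of $|z|$; bundled with this is the parameter bookkeeping that lands the data inside the admissible range of Theorem~\ref{1.20}, in particular the index-iteration inequality displayed above. The remaining implications are a direct translation of (C1)--(C4) into the earlier hypotheses, together with routine use of compactness on $\{|z|\le R\}$.
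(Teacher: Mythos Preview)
Your reduction to Theorem~\ref{1.20} does not go through in general: the numerical constraint built into (H3)$'$, namely
\[
\max_{1\le i\le n}\Bigl\{\tfrac{\alpha_i}{\beta_i},\tfrac{\beta_i}{\alpha_i},\tfrac{\xi_i}{\eta_i},\tfrac{\eta_i}{\xi_i},\tfrac{\sigma_i}{\tau_i},\tfrac{\tau_i}{\sigma_i}\Bigr\}<1+\beta,
\]
is \emph{not} a consequence of (C1)--(C4). With your identifications every ratio on the left equals $\varphi_i-1$ or $\psi_i-1$, while your argument produces at best $\beta=\theta^{-1}\min_i\{\varphi_i,\psi_i\}$; so the requirement becomes
\[
\max_{1\le i\le n}\{\varphi_i,\psi_i\}<2+\theta^{-1}\min_{1\le i\le n}\{\varphi_i,\psi_i\}.
\]
You flag this yourself (``Granting it''), but it is genuinely extra: nothing in (C1)--(C4) ties $\theta$ to the spread of the $\varphi_i,\psi_i$. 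For instance with $n=1$, $\varphi_1=10$, $\psi_1=\tfrac{10}{9}$ and $\theta=\tfrac12$, a Hamiltonian of the type $H(z)=|p|^{\varphi_1/\theta}+|q|^{\psi_1/\theta}$ (suitably smoothed near the origin) satisfies (C1)--(C4), yet the inequality above reads $10<2+\tfrac{20}{9}$, which is false. Thus your proof establishes only a proper subcase of Theorem~\ref{1.00}.

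The paper does \emph{not} reduce Theorem~\ref{1.00} to Theorem~\ref{1.20}. Instead it reruns the variational scheme of Section~3 directly under (C1)--(C4): one truncates $H$ to a modified $H_K$ of the form
\[
H_K(t,z)=\chi(|z|)H(t,z)+(1-\chi(|z|))\,R(K)\sum_{i=1}^n\bigl(|p_i|^{\theta^{-1}\varphi_i}+|q_i|^{\theta^{-1}\psi_i}\bigr),
\]
checks that $H_K$ still satisfies (C2), (C3) with $K$-independent constants, and then verifies the (PS)$^{*}$ condition and the homological linking geometry for $f_K$ \emph{using (C2), (C3) directly}, rather than through (H2) and (H3)$'$. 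The anisotropic dilation $B_\rho$ is built from the exponents $\varphi_i,\psi_i$ themselves, so no compatibility inequality between $\beta$ and the ratios $\sigma_i/\tau_i$ is ever needed. Finally (C3) is used to show the critical point of $f_K$ is a solution of the original system for $K$ large. Your dilation argument for the lower bound $H\ge cW^{1/\theta}$ is correct and is essentially the growth estimate the paper invokes, but it feeds into the linking construction, not into a verification of (H2)/(H3)$'$.
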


The above conditions (C1)-(C4) are similar to that of \cite{813}
with minor difference.

\begin{remark}\label{1.90}
In the case where $H(t,z)=\frac{1}{2}(\hat{B}(t)z,z)+\hat{H}(t,z)$ with
$\hat{B}(t)$ being a $\tau$-periodic, continuous symmetric  matrix function and $\hat{H}$ satisfying the conditions
as stated in {Theorems \ref{1.20}, \ref{1.00} and Corollary \ref{1.000}}, we also obtain the similar results with some restrictive conditions on $\hat{B}$ in Section 4 below. But compared with the results in \cite{102},
we note that the condition $B(t)$ is semi-positive-definite required in \cite{102} is not necessary here (see Remark \ref{5.4}).

In Section 5, we consider the minimal periodic problem of some autonomous Hamiltonian systems with the Hamiltonian functions $H(z)$ satisfying the anisotropic growth conditions as stated in {Theorems \ref{1.20}, \ref{1.00} and Corollary \ref{1.000}}. With the same tricks, we show that the critical points $(z,\tau)$ obtained from the homological link method in fact is the minimal periodic solution of the Hamiltonian systems provided the Hessian {$H_{zz}''(z)$ is positively definite for $z\in \mathbf{R}^{2n}\setminus \{0\}$}.
\end{remark}

In the pioneer work \cite{8}, Rabinowitz obtained a sequence of subharmonic solutions of the system (\ref{1.1}).
Since then, many papers were devoted to the study of subharmonic solutions (see \cite{2, 1, 21, 6, 4, 42, 102, 5, 108}). For the brake subharmonic solutions of Hamiltonian systems we refer to \cite{LC1, LC3}. For the $P$-symmetric subharmonic solutions of Hamiltonian systems we refer to \cite{LT2}.
 We note that all the results obtained in the references mentioned here are related with the Hamiltonian functions with superquadratic growth or subquadratic growth.

This paper is organized as follows, in Section 2, as preliminary we recall some notions about the  Maslov-type index theory and the iteration inequalities developed by Y.Long and the first author of this paper in \cite{112}. In this section
we also recall the homologically link theorem in \cite{100} from which we can find a critical point of the corresponding functional together with   index information. Under the conditions as in {Theorems \ref{1.20}, \ref{1.00} and Corollary \ref{1.000}}, we show that there is a homologically link structure for the functional.
In Section 3, we give a proof of {Theorems \ref{1.20}, \ref{1.00} and Corollary \ref{1.000}}. In Section 4, we consider the existence of subharmonic solutions of the Hamiltonian systems
 in the case where $H$ may contain a quadratic term as $H(t,z)=\frac{1}{2}(\hat{B}(t)z,z)+\hat{H}(t,z)$.
We consider minimal  periodic problem  for  the autonomous Hamiltonian systems in Section 5.

\section{Preliminaries}
\setcounter{equation}{0}

We first recall the notion of Maslov-type index and  some iteration estimates. We refer to \cite{100}, \cite{112} and \cite{105} for  details.

Denote by $Sp(2n)=\{M\in \mathcal{L}(\mathbf{R}^{2n})| M^{t}JM=J\}$
the $2n\times 2n$  symplectic group,
where $M^{t}$ denotes the transpose of $M$.
Define $\mathcal P(2n)=\{\gamma|\gamma\in C([0,\tau],Sp(2n)),\;\;\gamma(0)=I\}$.

For $\gamma\in \mathcal{P}(2n)$, according to \cite{100} and \cite{105},
there is a Maslov-type index theory which assigns to $\gamma$  a pair of integers
$$(i_{\tau},\nu_{\tau}):=(i_{\tau}(\gamma),\nu_{\tau}(\gamma))\in \mathbf{Z}\times \{0, 1, \cdots, 2n\},$$
where $i_{\tau}$ is the index part of $\gamma$ and $\nu_{\tau}$ is the nullity.

For  $\gamma\in \mathcal{P}(2n)$, define $\gamma^{k} : [0,k\tau]\rightarrow Sp(2n)$ by
$$\gamma^{k}(t)=\gamma(t-j\tau)\gamma(\tau)^{j},\ \ \ \ j\tau\leq t\leq (j+1)\tau,\ 0\leq j\leq k-1.$$

We denote the Maslov index of $\gamma^{k}$ on the interval $[0,k\tau]$ by
$(i_{k\tau},\nu_{k\tau}):=(i_{k\tau}(\gamma^{k}),\nu_{k\tau}(\gamma^{k}))$.

In the case of linear Hamiltonian systems
\begin{equation*}
\left\{\begin{array}{l}
 -J\dot{z}=B(t)z\\
z(0)=z(\tau),\\
\end{array}\right.
\end{equation*}
where $B(t)$ is a $\tau$-periodic, symmetric and continuous matrix function. Its fundamental solution is denoted by
$\gamma_{B}\in C([0,\tau],Sp(2n))$ with $\gamma(0)=I_{2n}$. The Maslov-type index $(i_{\tau}(B),\nu_{\tau}(B)):=(i_{\tau}(\gamma_B),\nu_{\tau}(\gamma_B))$  is also called the Maslov-type index of the matrix function $B(t)$.

If $z$ is a $\tau$-periodic solution of the system (\ref{1.1}), we denote by
$(i_{\tau}(z),\nu_{\tau}(z)):=(i_{\tau}(B),\nu_{\tau}(B))$  with $B(t)=H_{z}''(t,z(t))$.
The solution $z$ is non-degenerate if $\nu_{\tau}(z)=0$.

\begin{proposition}(\cite{102})\label{2.11}
If $z$ is a $k\tau$-periodic solution of the system (\ref{1.1}),
then $i_{k\tau}(j\ast z)=i_{k\tau}(z)$ and $\nu_{k\tau}(j\ast z)=\nu_{k\tau}(z)$ hold for $0\leq j\leq k$.
\end{proposition}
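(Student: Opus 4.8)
The plan is to reduce the statement to a fact about the fundamental solution of the linearized system along $z$, and then invoke homotopy invariance of the Maslov-type index. First I would put $\zeta = j\ast z$, which is again a $k\tau$-periodic solution of (\ref{1.1}), and observe that its linearization is a time translate of that of $z$: since $H$ is $\tau$-periodic in $t$,
$$B_\zeta(t) := H_z''(t,\zeta(t)) = H_z''(t,z(t+j\tau)) = H_z''(t+j\tau,z(t+j\tau)) = B_z(t+j\tau),$$
where $B_z(t) = H_z''(t,z(t))$ is $k\tau$-periodic (because $z$ is $k\tau$-periodic and $H$ is $\tau$-periodic in $t$). Extending $\gamma := \gamma_{B_z}$ to all of $\mathbf{R}$, uniqueness for linear systems together with the $k\tau$-periodicity of $B_z$ yields the cocycle identity $\gamma(t+k\tau) = \gamma(t)\gamma(k\tau)$; and differentiating $t\mapsto\gamma(t+j\tau)\gamma(j\tau)^{-1}$ and using $-J\dot\gamma = B_z\gamma$ shows that the fundamental solution along $\zeta$ is $\gamma_{B_\zeta}(t) = \gamma(t+j\tau)\gamma(j\tau)^{-1}$ on $[0,k\tau]$.

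For the nullity I would simply evaluate at $t=k\tau$: the cocycle identity gives $\gamma_{B_\zeta}(k\tau) = \gamma(j\tau)\gamma(k\tau)\gamma(j\tau)^{-1}$, which is symplectically conjugate to $\gamma(k\tau)$, so $\nu_{k\tau}(j\ast z) = \dim\ker(\gamma_{B_\zeta}(k\tau)-I) = \dim\ker(\gamma(k\tau)-I) = \nu_{k\tau}(z)$.

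For the index I would interpolate by $\Gamma_s(t) = \gamma(t+sj\tau)\gamma(sj\tau)^{-1}$, $s\in[0,1]$, $t\in[0,k\tau]$. Then $(s,t)\mapsto\Gamma_s(t)$ is continuous, $\Gamma_s(0) = I$ for every $s$, $\Gamma_0 = \gamma|_{[0,k\tau]}$, $\Gamma_1 = \gamma_{B_\zeta}$, and, by the cocycle identity, $\Gamma_s(k\tau) = \gamma(sj\tau)\gamma(k\tau)\gamma(sj\tau)^{-1}$ is conjugate to $\gamma(k\tau)$, so $\nu_{k\tau}(\Gamma_s) = \dim\ker(\gamma(k\tau)-I)$ is constant along the homotopy. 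Invoking the homotopy invariance of the Maslov-type index along continuous families of symplectic paths starting at $I$ whose terminal matrices have constant nullity (a basic property of the index theory of \cite{100,105}), I obtain $i_{k\tau}(j\ast z) = i_{k\tau}(\Gamma_1) = i_{k\tau}(\Gamma_0) = i_{k\tau}(z)$.

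The one point that needs care is invoking the homotopy invariance of $i_{k\tau}$ in precisely the form available — invariance along homotopies keeping $\dim\ker(\gamma(k\tau)-I)$ fixed — and checking that the family $\Gamma_s$ genuinely lies in that class, which it does by conjugacy of the endpoints; all the remaining ingredients (the time-shift formula for $B_\zeta$, the cocycle identity for $\gamma$, the conjugacy computations) are routine. As an alternative I would keep in reserve the identification of $(i_{k\tau},\nu_{k\tau})$ with the Morse index and nullity of the Hamiltonian action functional on $k\tau$-periodic loops: that functional is invariant under the translation $z(\cdot)\mapsto z(\cdot+j\tau)$ (again by $\tau$-periodicity of $H$), so $z$ and $j\ast z$ carry the same Morse data, which gives the claim at once.
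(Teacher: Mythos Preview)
The paper does not supply its own proof of this proposition; it is simply quoted from \cite{102}. Your argument is correct and is the standard one: the time-shift identity $B_{j\ast z}(t)=B_z(t+j\tau)$, the cocycle relation $\gamma(t+k\tau)=\gamma(t)\gamma(k\tau)$, conjugacy of the endpoints $\Gamma_s(k\tau)=\gamma(sj\tau)\gamma(k\tau)\gamma(sj\tau)^{-1}$, and homotopy invariance of $i_{k\tau}$ along families in $\mathcal P(2n)$ with constant nullity are exactly the ingredients one uses, and your alternative via the $\tau$-shift invariance of the action functional is equally valid.
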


\begin{proposition}(\cite{102})\label{2.10}
For $k\in \mathbf{N}$, there holds
$$k(i_{\tau}+\nu_{\tau}-n)-n\leq i_{k\tau}\leq k(i_{\tau}+n)+n-\nu_{k\tau}.$$
\end{proposition}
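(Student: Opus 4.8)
The statement is a purely linear-algebraic fact about symplectic paths. Writing $\gamma=\gamma_B\in\mathcal P(2n)$ for the fundamental solution of $-J\dot z=B(t)z$ with $B(t)=H''_{zz}(t,z(t))$, so that $(i_{k\tau},\nu_{k\tau})=(i_{k\tau}(\gamma^k),\nu_{k\tau}(\gamma^k))$, I would deduce it from two structural properties of the mean index $\hat i(\gamma):=\lim_{m\to\infty}\frac{1}{m}\,i_{m\tau}(\gamma^m)$, whose existence is part of the iteration theory of \cite{112}. The two properties are: (i) \emph{homogeneity under iteration}, $\hat i(\gamma^k)=k\,\hat i(\gamma)$, which is immediate from the definition, since $(\gamma^k)^m=\gamma^{km}$ gives $i_{km\tau}((\gamma^k)^m)=i_{km\tau}(\gamma^{km})$ and one passes to the subsequence $M=km$ in the defining limit; and (ii) the \emph{basic one-step estimate} $i_\tau(\gamma)+\nu_\tau(\gamma)-n\le\hat i(\gamma)\le i_\tau(\gamma)+n$, valid for every $\gamma\in\mathcal P(2n)$.

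Granting (i) and (ii), the proposition follows from two short computations. For the lower bound, the left half of (ii) applied to $\gamma$ gives $k\hat i(\gamma)\ge k(i_\tau+\nu_\tau-n)$, while the right half of (ii) applied to the path $\gamma^k$ (of period $k\tau$) gives $i_{k\tau}=i_{k\tau}(\gamma^k)\ge\hat i(\gamma^k)-n=k\hat i(\gamma)-n\ge k(i_\tau+\nu_\tau-n)-n$. For the upper bound, the right half of (ii) applied to $\gamma$ gives $k\hat i(\gamma)\le k(i_\tau+n)$, while the left half of (ii) applied to $\gamma^k$ gives $i_{k\tau}(\gamma^k)+\nu_{k\tau}(\gamma^k)-n\le\hat i(\gamma^k)=k\hat i(\gamma)\le k(i_\tau+n)$, that is, $i_{k\tau}\le k(i_\tau+n)+n-\nu_{k\tau}$. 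Both bounds collapse to the trivial $0\le\nu_\tau\le 2n$ when $k=1$, a convenient consistency check.

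The substantive step is property (ii), and this is where I expect the real work to lie. I would obtain it from the Bott-type iteration formula $i_{m\tau}(\gamma^m)=\sum_{\omega^m=1}i_\omega(\gamma)$ and $\nu_{m\tau}(\gamma^m)=\sum_{\omega^m=1}\nu_\omega(\gamma)$ of \cite{112}: dividing by $m$ and letting $m\to\infty$ identifies $\hat i(\gamma)$ with the circle average of the $\omega$-index $\omega\mapsto i_\omega(\gamma)$. One then needs the structural facts that this function is locally constant away from the finitely many eigenvalues of $\gamma(\tau)$ on the unit circle, that it jumps there only by splitting numbers $S^\pm$ with $0\le S^\pm\le\nu_\omega(\gamma)\le 2n$, and that these jumps telescope to zero around a full turn; together with $i_1(\gamma)=i_\tau(\gamma)$ and $\nu_1(\gamma)=\nu_\tau(\gamma)$ this confines $\omega\mapsto i_\omega(\gamma)$, hence its average, to the window $[\,i_\tau+\nu_\tau-n,\ i_\tau+n\,]$. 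The only delicate point is the bookkeeping of the constants $\pm n$, in particular isolating the contribution of $\omega=1$, where the nullity may be as large as $2n$; a fully detailed treatment is the splitting-number analysis of \cite{105,112} (alternatively, (ii) can be read off a Galerkin/Morse-index description of $i_{k\tau}$ together with a dimension count). Since the inequality is quoted here from \cite{102}, in the paper itself one may of course simply cite it; the above is the route by which I would reconstruct it.
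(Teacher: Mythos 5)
The paper does not prove Proposition~\ref{2.10}; it cites \cite{102} and moves on, so there is no in-paper argument to compare against. Taken on its own terms, your reconstruction is correct. The reduction to the two properties of the mean index --- homogeneity $\hat{i}(\gamma^k)=k\,\hat{i}(\gamma)$ and the one-step window $i_\tau+\nu_\tau-n\le\hat{i}(\gamma)\le i_\tau+n$ --- is clean, and the two short computations that extract the asserted inequality from these are right, as is the $k=1$ consistency check. The one-step window, which you rightly flag as the substantive step, does follow from the Bott-type formula together with the standard pointwise $\omega$-index bounds $i_1+\nu_1-n\le i_\omega(\gamma)$ and $i_\omega(\gamma)+\nu_\omega(\gamma)\le i_1+n$ valid for $\omega$ on the unit circle with $\omega\ne 1$; since the singleton $\{1\}$ has measure zero, the circle average $\hat{i}$ inherits both bounds even though neither need hold at $\omega=1$ once $\nu_1>n$, which is precisely the bookkeeping you allude to.

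One remark on sharpness, though. Averaging over the circle throws information away. If you instead feed the same two pointwise bounds directly into the Bott sum $i_{k\tau}=\sum_{\omega^k=1}i_\omega(\gamma)$, isolating the $\omega=1$ term and using $\sum_{\omega^k=1}\nu_\omega(\gamma)=\nu_{k\tau}$, you get the tighter estimate
\begin{equation*}
k(i_\tau+\nu_\tau-n)+n-\nu_\tau\;\le\; i_{k\tau}\;\le\; k(i_\tau+n)-n-(\nu_{k\tau}-\nu_\tau),
\end{equation*}
which is exactly the neighboring Proposition~\ref{2.12} of the paper (from \cite{112}); Proposition~\ref{2.10} then follows as a corollary by the crude bound $0\le\nu_\tau\le 2n$. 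So the mean-index route proves the stated inequality but not the strongest inequality the same ingredients yield, and the direct Bott-sum computation is probably closer to what \cite{102} and \cite{112} actually do.
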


\begin{proposition}(\cite{112})\label{2.12}
For $m\in \mathbf{N}$, there holds
$$m(i_{\tau}+\nu_{\tau}-n)+n-\nu_{\tau}
\leq i_{m\tau}\leq m(i_{\tau}+n)-n-(\nu_{m\tau}-\nu_{\tau}).$$
\end{proposition}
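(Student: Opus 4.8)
The plan is to reduce everything to the linear system and then exploit the $\omega$-index theory and the Bott-type iteration formula of Long, following \cite{112}. Write $\gamma=\gamma_B\in\mathcal P(2n)$ for the fundamental solution of $-J\dot z=B(t)z$ and $M=\gamma(\tau)\in Sp(2n)$, so that $i_\tau=i_1(\gamma)$, $\nu_\tau=\nu_1(\gamma)$, $i_{m\tau}=i_{m\tau}(\gamma^m)$ and $\nu_{m\tau}=\nu_{m\tau}(\gamma^m)$. For $\omega\in\mathbf U:=\{z\in\mathbf C:|z|=1\}$ denote by $i_\omega(\gamma)$ the $\omega$-index and by $\nu_\omega(\gamma)=\dim_{\mathbf C}\ker_{\mathbf C}(M-\omega I_{2n})$ the $\omega$-nullity. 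With $\omega_j=e^{2\pi\sqrt{-1}\,j/m}$, $0\le j\le m-1$, the Bott-type iteration formula reads
\[
 i_{m\tau}(\gamma^m)=\sum_{j=0}^{m-1}i_{\omega_j}(\gamma),\qquad \nu_{m\tau}(\gamma^m)=\sum_{j=0}^{m-1}\nu_{\omega_j}(\gamma).
\]
Since $\omega_0=1$, the $j=0$ terms are exactly $i_\tau$ and $\nu_\tau$, whence $\sum_{j=1}^{m-1}\nu_{\omega_j}(\gamma)=\nu_{m\tau}-\nu_\tau$.

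The crucial ingredient is the one-step estimate: for every $\omega\in\mathbf U\setminus\{1\}$,
\[
 i_\tau+\nu_\tau-n\ \le\ i_\omega(\gamma)\ \le\ i_\tau+n-\nu_\omega(\gamma).
\]
This is where the symplectic structure is genuinely used, and it is the part I expect to be the main obstacle. I would derive it from the splitting-number description of the function $\omega\mapsto i_\omega(\gamma)$: this function is locally constant on $\mathbf U\setminus\sigma(M)$, at each $\omega_0\in\sigma(M)\cap\mathbf U$ it jumps by an amount recorded by the splitting numbers $S_M^{\pm}(\omega_0)$, these are non-negative integers bounded above by $\nu_{\omega_0}(M)$, and the net jump of $i_\omega(\gamma)$ around the whole circle vanishes. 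Following $i_\omega(\gamma)$ along each of the two arcs of $\mathbf U$ joining $1$ to $\omega$ and summing the jumps then produces the two inequalities, the quantities $\nu_\tau$ and $\nu_\omega(\gamma)$ entering through the splittings at $1$ and at $\omega$. Alternatively, one decomposes $M$ up to homotopy into a $\diamond$-sum of basic normal forms (rotations, shears, hyperbolic and parabolic blocks); both sides of the one-step estimate are additive under $\diamond$-sums, so it suffices to verify it for each basic normal form using the explicit iteration formulas, a finite check.

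Granting the one-step estimate, the remainder is bookkeeping. Summing it over $j=1,\dots,m-1$ yields
\[
 (m-1)(i_\tau+\nu_\tau-n)\ \le\ \sum_{j=1}^{m-1}i_{\omega_j}(\gamma)\ \le\ (m-1)(i_\tau+n)-\sum_{j=1}^{m-1}\nu_{\omega_j}(\gamma);
\]
adding the $j=0$ contribution $i_\tau$ to all three sides, and using the Bott formula $i_{m\tau}=i_\tau+\sum_{j=1}^{m-1}i_{\omega_j}(\gamma)$ together with $\sum_{j=1}^{m-1}\nu_{\omega_j}(\gamma)=\nu_{m\tau}-\nu_\tau$, a short computation gives precisely
\[
 m(i_\tau+\nu_\tau-n)+n-\nu_\tau\ \le\ i_{m\tau}\ \le\ m(i_\tau+n)-n-(\nu_{m\tau}-\nu_\tau).
\]
Note that this is strictly sharper than Proposition \ref{2.10}, the improvement coming precisely from retaining the per-$\omega$ nullities $\nu_{\omega_j}(\gamma)$ rather than only a mean-index bound; thus all the real work sits in the one-step $\omega$-index estimate, everything else being a bookkeeping application of the Bott-type formula.
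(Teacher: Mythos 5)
The paper itself does not prove Proposition \ref{2.12}; it is quoted from Liu--Long \cite{112}, so there is no internal proof to compare against. Your strategy --- the Bott-type iteration formula $i_{m\tau}=\sum_{j=0}^{m-1}i_{\omega_j}(\gamma)$, $\nu_{m\tau}=\sum_{j=0}^{m-1}\nu_{\omega_j}(\gamma)$ together with the one-step estimate
\begin{equation*}
i_\tau+\nu_\tau-n\ \le\ i_\omega(\gamma)\ \le\ i_\tau+n-\nu_\omega(\gamma),\qquad \omega\in\{z\in\mathbf{C}:|z|=1\}\setminus\{1\},
\end{equation*}
is precisely the mechanism used in \cite{112}, and your arithmetic reassembling the $(m-1)$ summed inequalities into the stated two-sided bound is correct. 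You are also right to flag the one-step estimate as the real content: it is not a routine consequence of $0\le S_M^{\pm}(\omega_0)\le\nu_{\omega_0}(M)$ alone. The derivation in \cite{112} (see also Long's book \cite{105}) tracks the jumps of $\omega\mapsto i_\omega(\gamma)$ along the two arcs joining $1$ to $\omega$ and combines the bound $S_M^{\pm}\le\nu$ with the conjugation symmetry $S_M^{+}(\overline{\omega_0})=S_M^{-}(\omega_0)$ and the vanishing of the net jump around the circle; equivalently, as you note, one reduces to the $\diamond$-additive list of basic normal forms and checks each case from the explicit $\omega$-index formulas. So your proposal is a correct outline with the central lemma correctly identified but left as a sketch rather than proved.
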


\begin{lemma}(\cite{100})\label{2.121}
Let $B(t)$ be a $\tau$-periodic, symmetric and continuous matrix function. Assume
$B(t)$ are positive for $t\in [0, \tau]$ and $B(t_{0})$ is strictly positive for
some $t_0\in [0, \tau]$. Then $i_{\tau}(B)\geq n$.
\end{lemma}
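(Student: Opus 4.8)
The plan is to reduce the statement to the known comparison and positivity properties of the Maslov-type index. First I would recall that for a symmetric continuous loop of coefficients $B(t)$, the Maslov-type index $i_\tau(B)$ can be characterized variationally: $i_\tau(B)$ equals the Morse index (the dimension of the maximal negative subspace) of the quadratic form $Q_B(z) = \int_0^\tau \bigl(\tfrac12(-J\dot z - B(t)z, z)\bigr)\,dt$ on the Sobolev space $E = W^{1/2,2}(\mathbf{R}/\tau\mathbf{Z};\mathbf{R}^{2n})$, up to the standard normalization that fixes the reference at $B \equiv 0$ (where $i_\tau(0) = -n$). Equivalently, one uses the monotonicity property: if $B_1(t) \le B_2(t)$ for all $t$ with strict inequality on a set of positive measure, then $i_\tau(B_1) < i_\tau(B_2) + \nu_\tau(B_2)$, and more simply $i_\tau(B_1) \le i_\tau(B_2)$.

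Next I would produce a concrete lower bound by exhibiting an $n$-dimensional negative subspace for $Q_B$, or by comparing with a constant matrix. Since $B(t)$ is positive for all $t \in [0,\tau]$ and continuous on the compact interval, there is a constant $\varepsilon > 0$ and a constant symmetric matrix $B_0 := \varepsilon I_{2n}$ with $0 < B_0 \le B(t)$ on a neighborhood of $t_0$; using the strict positivity of $B(t_0)$ one can moreover arrange that $B_0 \le B(t)$ only locally but combine this with $B(t) \ge 0$ globally. The cleanest route is: by monotonicity $i_\tau(B) + \nu_\tau(B) \ge i_\tau(\delta I_{2n})$ for small $\delta > 0$ chosen so that $\delta I_{2n} \le B(t)$ fails to hold globally but a perturbation argument still applies — so instead I would directly compute $i_\tau(\delta I_{2n})$ for $\delta > 0$ small. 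For the constant system $-J\dot z = \delta z$, the fundamental solution is $\gamma(t) = \exp(t\delta J)$, which is a rotation; a direct computation of the rotation number gives $i_\tau(\delta I_{2n}) = n$ for all sufficiently small $\delta > 0$ (the eigenvalues $e^{\pm i\delta\tau}$ stay in the upper/lower half circle without completing a full turn, contributing exactly $n$), with $\nu_\tau(\delta I_{2n}) = 0$. Then strict positivity of $B(t_0)$ feeds into a strict version of the comparison to absorb the nullity term and conclude $i_\tau(B) \ge n$.

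The main obstacle I expect is handling the boundary case carefully: positivity of $B(t)$ only guarantees $i_\tau(B) \ge i_\tau(\delta I) = n$ via monotonicity if one can genuinely compare $B(t)$ with $\delta I_{2n}$ globally, which requires $B(t) \ge \delta I_{2n}$ for all $t$, i.e. a uniform positive lower bound; but $B(t)$ is only assumed positive (possibly degenerating at some points), with strict positivity only at the single point $t_0$. So the real work is a localization/perturbation argument: one perturbs $B$ to $B_s(t) = B(t) + s\,\chi(t)$ where $\chi$ is a bump function supported near $t_0$, shows $i_\tau(B_s)$ is eventually constant in $s$ and equals $i_\tau(B)$ because the index can only jump where the nullity is positive, and then for the perturbed family one has genuine local strict positivity enabling the comparison. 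Combining this with the computation $i_\tau(\delta I_{2n}) = n$ and the non-decreasing property of the index under $B(t) \mapsto B(t) + (\text{nonnegative})$ yields $i_\tau(B) \ge n$. I would also double-check the normalization convention in \cite{100} to make sure the additive constant in the Morse-index identification is $-n$ and not something else, since the whole numeric conclusion hinges on it.
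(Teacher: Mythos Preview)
The paper does not prove this lemma; it is quoted from Abbondandolo's monograph \cite{100} without argument. So there is no proof in the paper to compare against, but your proposal can be assessed on its own.

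Your instinct to use monotonicity of the Maslov-type index is correct, and the computation $i_\tau(\delta I_{2n})=n$ for $0<\delta\tau<2\pi$ is right. However, the argument as written has a genuine gap. The perturbation $B_s(t)=B(t)+s\chi(t)$ with $\chi$ a bump supported near $t_0$ does \emph{not} produce a matrix function satisfying $B_s(t)\ge \delta I_{2n}$ for all $t$: away from the support of $\chi$ you still have only $B_s(t)=B(t)\ge 0$, possibly degenerate. Hence the global comparison with $\delta I_{2n}$ is never available, and your concluding sentence (``combining this with \dots the non-decreasing property \dots yields $i_\tau(B)\ge n$'') does not follow from what precedes it.

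The fix is shorter than the detour you propose, and it hinges on stating the sharp monotonicity correctly. You wrote $i_\tau(B_1)<i_\tau(B_2)+\nu_\tau(B_2)$; the correct inequality has the nullity on the \emph{left}:
\[
B_1(t)\le B_2(t)\ \forall t,\quad B_1(t_0)<B_2(t_0)\ \text{for some }t_0
\ \Longrightarrow\ i_\tau(B_1)+\nu_\tau(B_1)\le i_\tau(B_2).
\]
(The reason strict inequality at a single point suffices: nonzero elements of $\ker(A-B_1)$ solve a linear ODE and are therefore nowhere vanishing, so the integrated perturbation $\int_0^\tau((B_2-B_1)z,z)\,dt$ is strictly positive on that kernel.) Now apply this directly with $B_1\equiv 0$ and $B_2=B$: since $i_\tau(0)=-n$ and $\nu_\tau(0)=2n$, one obtains $i_\tau(B)\ge -n+2n=n$ in one line. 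No comparison with $\delta I_{2n}$, no bump function, and no normalization worry beyond the standard $i_\tau(0)=-n$, which is consistent with Lemma~\ref{2.2} in the paper.
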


\begin{lemma}(\cite{121})\label{2.120}
Let $B(t)$ be a $\tau$-periodic, symmetric and continuous matrix function.
Assume for some $k\in \mathbf{N}$, there hold $i_{k\tau}(B)\leq n+1$, $i_{\tau}(B)\geq n$
and $\nu_{\tau}(B)\geq1$. Then $k=1.$
\end{lemma}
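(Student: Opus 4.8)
The plan is to play the lower iteration estimate for the Maslov-type index against the prescribed upper bound $n+1$: this at once confines $k$ to $\{1,2\}$, and the borderline value $k=2$ is then eliminated by a sharper, splitting-number analysis of the second iterate. Write $\gamma=\gamma_B$ for the fundamental solution, so that $(i_\tau,\nu_\tau)=(i_\tau(B),\nu_\tau(B))$ and $i_{m\tau}=i_{m\tau}(\gamma^m)$. From $i_\tau\ge n$ and $\nu_\tau\ge 1$ one gets $i_\tau+\nu_\tau-n\ge 1$, and applying the lower bound of Proposition \ref{2.12} with $m=k$, then regrouping the terms,
$$
i_{k\tau}\ \ge\ k(i_\tau+\nu_\tau-n)+n-\nu_\tau\ =\ n+k(i_\tau-n)+(k-1)\nu_\tau\ \ge\ n+(k-1),
$$
where the last inequality uses $i_\tau-n\ge 0$ and $\nu_\tau\ge 1$. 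Since $i_{k\tau}\le n+1$ by hypothesis, this forces $k-1\le 1$, i.e.\ $k\in\{1,2\}$; moreover, if $k=2$ then every step above must be an equality, which pins down
$$
i_\tau(B)=n,\qquad \nu_\tau(B)=1,\qquad i_{2\tau}(B)=n+1.
$$

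It then remains to rule out this single borderline configuration, and this is where the real content lies, since the crude iteration inequality alone only delivers $k\le 2$. I would argue through the $\omega$-index and the Bott-type iteration formulas $i_{2\tau}(\gamma^2)=i_1(\gamma)+i_{-1}(\gamma)$ and $\nu_{2\tau}(\gamma^2)=\nu_1(\gamma)+\nu_{-1}(\gamma)$, recalling that $i_1(\gamma)=i_\tau(\gamma)=n$ and $\nu_1(\gamma)=1$. Because the eigenvalue $1$ of $M:=\gamma(\tau)$ has $\nu_1=1$, in the symplectic normal form it occupies a single two–dimensional (parabolic) block; splitting $\gamma\simeq\gamma_1\diamond\gamma'$ with $\gamma_1\in\mathcal P(2)$ carrying that block and $\nu_1(\gamma')=0$, one computes the splitting numbers $S_M^{\pm}(1)$ and tracks the jumps of $\omega\mapsto i_\omega(\gamma)$ along the arc from $\omega=1$ to $\omega=-1$. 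The outcome I expect is $i_{-1}(\gamma)\ge 2$ — morally, having $i_1(\gamma)=n$ while the endpoint still carries a nontrivial $1$–eigenblock means $\gamma$ has already accumulated essentially a full turn of rotational content, which is doubled in the iterate — and hence $i_{2\tau}(\gamma^2)=n+i_{-1}(\gamma)\ge n+2$, contradicting $i_{2\tau}(B)=n+1$. Therefore $k=2$ cannot occur and $k=1$. An alternative to the splitting-number bookkeeping is to invoke Long's precise iteration formula, which computes $i_{2\tau}(\gamma^2)$ directly from the normal form of $M$ and again yields the strict gain over $n+1$. Either way, making precise how a nontrivial $1$–eigenblock interacts with the $(-1)$–index under iteration is the main obstacle; everything preceding it is routine from Proposition \ref{2.12}.
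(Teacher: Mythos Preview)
The paper does not prove this lemma; it is quoted from Dong--Long \cite{121} without argument, so there is no in-paper proof to compare against. I can therefore only assess your attempt on its own merits.

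Your first step is clean and correct: applying the lower bound of Proposition~\ref{2.12} together with $i_\tau\ge n$ and $\nu_\tau\ge 1$ gives $i_{k\tau}\ge n+(k-1)$, and the hypothesis $i_{k\tau}\le n+1$ forces $k\in\{1,2\}$, with the $k=2$ case pinned to $i_\tau=n$, $\nu_\tau=1$, $i_{2\tau}=n+1$.

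The genuine gap is in ruling out $k=2$. You yourself flag it (``the main obstacle''), and indeed the heuristic you offer --- ``$\gamma$ has already accumulated a full turn'' --- is not a proof. More concretely, your expected inequality $i_{-1}(\gamma)\ge 2$ does \emph{not} follow from $i_1=n$ and $\nu_1=1$ by soft reasoning alone. In the $n=1$ case, with $M=\gamma(\tau)\sim N_1(1,b)$, one has $i_{-1}(\gamma)=i_1(\gamma)+S_M^+(1)$, and for one sign of $b$ the splitting number $S_M^+(1)$ vanishes, which would give $i_{-1}=i_1=1$ and $i_{2\tau}=2=n+1$ --- no contradiction. What actually saves the lemma is a parity-type constraint: the value $i_1(\gamma)=n$ is incompatible with that particular normal form (the parity of $i_1$ is determined by the component of the endpoint), so one is forced into the block with $S_M^+(1)=1$, and then $i_{-1}\ge n+1$. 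For general $n$ one must also control contributions from possible eigenvalues of $M$ on the open arc between $1$ and $-1$. This is precisely the finer splitting-number / normal-form bookkeeping carried out in \cite{121}; Propositions~\ref{2.10} and~\ref{2.12} alone do not suffice. So your outline points in the right direction, but the step you defer is the entire substance of the lemma, and your stated expectation is not correct without the additional parity argument.
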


Now we introduce some concepts and results of Sobolev space theory.

Let $E=W^{\frac{1}{2},2}(S_{\tau},\mathbf{R}^{2n})=
\bigg\{z\in L^{2}(S_{\tau},\mathbf{R}^{2n})\big| \displaystyle\sum_{j\in \textbf{Z}}|j||a_{j}|^{2}<+\infty\bigg\}$,
where $S_{\tau}:=\mathbf{R}/\tau\textbf{Z}$, $z(t)=\sum_{k\in \textbf{Z}}\exp{(\frac{2k\pi t}{\tau}J)}a_{k},
a_{k}\in \mathbf{R}^{2n}$.

For $\zeta\in E$, $\zeta(t)=\sum_{k\in \textbf{Z}}\exp{(\frac{2k\pi t}{\tau}J)}b_{k},
b_{k}\in \mathbf{R}^{2n}$, the inner product on $E$ is
$$\langle z,\zeta\rangle=\tau(a_{0},b_{0})+\tau\sum_{k\in \textbf{Z}}|k|a_{k}\cdot b_{k}.$$

\begin{lemma} \label{l2.1}(\cite{9}) (Embedding Theorem) The space
$E$ compactly embeds into $L^{s}(S_{\tau},\mathbf{R}^{2n})$ $(s\geq 1)$,
in particular, there exists a constant $C_{s}>0$
such that $\| z\|_{L^{s}}\leq C_{s}\| z\|$ holds for $z\in E$,
where $\|\cdot\|$ denotes the norm on $E$.
\end{lemma}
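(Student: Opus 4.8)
The plan is to identify $E$ with the fractional Sobolev space $H^{1/2}(S_\tau;\mathbf{R}^{2n})$ through its Fourier description and to prove first the \emph{continuous} embedding $E\hookrightarrow L^s$ for every finite $s\geq1$, and then the \emph{compactness}. Throughout I write $z(t)=\sum_{k\in\mathbf{Z}}\exp(\tfrac{2k\pi t}{\tau}J)a_k$, so that $\|z\|^2=\tau|a_0|^2+\tau\sum_{k\neq0}|k|\,|a_k|^2$ and, by orthogonality of the exponentials over $[0,\tau]$, $\|z\|_{L^2}^2=\tau\sum_k|a_k|^2\leq\|z\|^2$. This already settles $s=2$ with $C_2\leq1$, and the range $1\leq s<2$ follows from it by Hölder's inequality on the finite-measure space $S_\tau$; hence the real content is $s>2$.

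For the continuous embedding when $s>2$ I would use a dyadic (Littlewood--Paley) decomposition $z=a_0+\sum_{j\geq0}z_j$, where $z_j$ collects the frequencies $2^j\leq|k|<2^{j+1}$. Each $z_j$ is a trigonometric polynomial of degree $<2^{j+1}$, so Bernstein's (Nikolskii's) inequality gives $\|z_j\|_{L^s}\leq C\,2^{j(\frac12-\frac1s)}\|z_j\|_{L^2}$ for $s\geq2$, while setting $\|z_j\|_\ast^2:=\tau\sum_{2^j\leq|k|<2^{j+1}}|k|\,|a_k|^2$ one has $\|z_j\|_{L^2}^2=\tau\sum_{2^j\leq|k|<2^{j+1}}|a_k|^2\leq 2^{-j}\|z_j\|_\ast^2$. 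Therefore $\|z_j\|_{L^s}\leq C\,2^{-j/s}\|z_j\|_\ast$, and by the triangle inequality together with Cauchy--Schwarz,
$$\|z\|_{L^s}\leq\|a_0\|_{L^s}+\sum_{j\geq0}\|z_j\|_{L^s}\leq C\Big(|a_0|+\big(\textstyle\sum_{j\geq0}2^{-2j/s}\big)^{1/2}\big(\sum_{j\geq0}\|z_j\|_\ast^2\big)^{1/2}\Big)\leq C_s\|z\|,$$
using $|a_0|\leq\tau^{-1/2}\|z\|$ and $\sum_{j}\|z_j\|_\ast^2\leq\|z\|^2$. Here $\sum_{j}2^{-2j/s}<\infty$ precisely because $s<\infty$. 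I expect this to be the most delicate point: the geometric series diverges as $s\to\infty$, reflecting the borderline failure of $H^{1/2}\hookrightarrow L^\infty$ in dimension one, so one must keep $s$ finite and monitor how $C_s$ blows up with $s$.

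For compactness, let $\{z_m\}\subset E$ be bounded, $\|z_m\|\leq M$. For $N\in\mathbf{N}$ let $P_N$ denote the truncation to frequencies $|k|\leq N$; then $\|(I-P_N)z_m\|_{L^2}^2=\tau\sum_{|k|>N}|a_k^{(m)}|^2\leq N^{-1}\|z_m\|^2\leq M^2/N$ uniformly in $m$, while $\{P_Nz_m\}_m$ lies in a fixed finite-dimensional subspace and is bounded, hence precompact. A standard total-boundedness argument (approximate $\{z_m\}$ within $M/\sqrt N$ by the precompact finite-dimensional family, then cover by finitely many balls and let $N\to\infty$) shows $\{z_m\}$ is precompact in $L^2(S_\tau;\mathbf{R}^{2n})$; passing to a subsequence, $z_m\to z$ in $L^2$. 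To upgrade to $L^s$ with $s>2$, choose any $q>s$; by the continuous embedding proved above $\{z_m\}$ is bounded in $L^q$, so the interpolation inequality $\|z_m-z\|_{L^s}\leq\|z_m-z\|_{L^2}^{\theta}\|z_m-z\|_{L^q}^{1-\theta}$ (with $\tfrac1s=\tfrac{\theta}{2}+\tfrac{1-\theta}{q}$) forces $z_m\to z$ in $L^s$; for $1\leq s<2$ the convergence follows directly from the $L^2$ convergence and Hölder's inequality. This gives the compact embedding $E\hookrightarrow L^s$ for every finite $s\geq1$, together with the stated bound $\|z\|_{L^s}\leq C_s\|z\|$.
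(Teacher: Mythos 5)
Your proof is correct. Note that the paper itself offers no argument for this lemma; it simply cites Rabinowitz's CBMS notes~\cite{9}, so there is no in-paper proof to compare against, only the standard reference.

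Your continuous-embedding argument for $s>2$ organizes the computation by a dyadic (Littlewood--Paley) decomposition, pairing Nikolskii's inequality for each dyadic block with the gain $\|z_j\|_{L^2}\leq 2^{-j/2}\|z_j\|_\ast$ coming from the $|k|$-weight, and then summing a geometric series after Cauchy--Schwarz. This is a clean, modern way to see the borderline Sobolev embedding $H^{1/2}(S^1)\hookrightarrow L^s$ for all finite $s$, and it makes transparent both why the constant $C_s$ is finite and why it degenerates as $s\to\infty$. The argument in the cited source proceeds by a single frequency cut $z=\sum_{|k|\leq N}+\sum_{|k|>N}$ (low part controlled in $L^\infty$ by Cauchy--Schwarz against $\sum_{|k|\leq N}|k|^{-1}\sim\log N$, high part controlled in $L^2$), optimizing over $N$; it is essentially the same idea but organized linearly rather than dyadically, and is slightly more elementary in that it avoids invoking Nikolskii/Bernstein by name. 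Your compactness argument (tail estimate $\|(I-P_N)z_m\|_{L^2}\leq MN^{-1/2}$, precompactness of the truncated family, total boundedness in $L^2$, then interpolation to upgrade to $L^s$) is the standard route and matches the reference in spirit. Two small remarks: (i) the orthogonality $\|z\|_{L^2}^2=\tau\sum_k|a_k|^2$ you use implicitly relies on $\exp(\tfrac{2k\pi t}{\tau}J)$ being orthogonal matrices together with $\int_0^\tau\exp(\tfrac{2m\pi t}{\tau}J)\,dt=0$ for $m\neq0$, which is worth stating since the basis here is matrix-valued rather than scalar; (ii) in the compactness step it is cleaner to say directly that a bounded sequence in $E$ has a weakly convergent subsequence whose tail estimate forces strong $L^2$ convergence, rather than appealing to total boundedness of the whole sequence, but the argument you give is also fine.
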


There exists a linear bounded self-adjoint operators $A$ such that
$\langle Az, \zeta\rangle=2\pi\sum_{k\in \textbf{Z}}k a_{k}\cdot b_{k}$,
obtained by extending the bilinear form $\langle Az, \zeta\rangle
=\int^{2\pi}_{0}(-J\dot{z},\zeta)\text{d}t$, $z, \zeta\in W^{1,2}(S_{\tau},\mathbf{R}^{2n})$.

Set $E^{\pm}= \{z\in E | z(t)=\sum_{\pm k>0}\exp{(\frac{2k\pi t}{\tau}J)}a_{k}, a_{k}\in \mathbf{R}^{2n}\}$
and $E^{0}= \mathbf{R}^{2n}$, then $Az^{\pm}=\pm\frac{2\pi}{\tau}z^{\pm}$, $z^{\pm}\in E^{\pm}$ (see \cite{102}).
Moreover, we set $E_{m}=\{z\in E | z(t)=\sum_{k=-m}^{m}\exp{(\frac{2k\pi t}{\tau}J)}a_{k},
a_{k}\in \mathbf{R}^{2n}\}$ and
$E_{m}^{\pm}=E^{\pm}\bigcap E_{m}$, and let $P_{m}$ be the corresponding orthogonal projection.

For $d>0$, we denote by $M_{d}^{+}(C)$, $M_{d}^{-}(C)$ and $M_{d}^{0}(C)$ the eigenspaces
of any linear bounded self-adjoint Fredholm operator $C$ corresponding to the eigenvalue $\lambda$
belonging to $(d,+\infty)$, $(-\infty,d)$ and $[-d,d]$ respectively.

Given $B(t)$ a $\tau$-periodic, symmetric and continuous matrix function with Maslov-type index
$(i_{\tau}(B),\nu_{\tau}(B))$, define $\langle Bz, \zeta\rangle=\int^{\tau}_{0}(B(t)z,\zeta)\text{d}t$, $z, \zeta\in E$.
Set $(A-B)^{\sharp}:= (A-B|_{R(A-B)})^{-1}$, then we have the following theorem.
\begin{lemma}(\cite{101})\label{2.2}
Suppose $0<d<\frac{1}{4}\| (A-B)^{\sharp}\|^{-1}$, then for $m$ large enough, there hold
\begin{equation*}\label{2.3}
 \text{dim}M_{d}^{+}(P_{m}(A-B)P_{m})=\frac{1}{2}\text{dim}(P_{m}E)-i_{\tau}(B)-\nu_{\tau}(B),
\end{equation*}
\begin{equation*}\label{2.4}
 \text{dim}M_{d}^{-}(P_{m}(A-B)P_{m})=\frac{1}{2}\text{dim}(P_{m}E)+i_{\tau}(B),
\end{equation*}
\begin{equation*}\label{2.5}
 \text{dim}M_{d}^{0}(P_{m}(A-B)P_{m})=\nu_{\tau}(B).
\end{equation*}
\end{lemma}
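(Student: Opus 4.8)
The plan is to compute the three eigenspace dimensions by a finite-dimensional approximation argument, tracking how the quadratic form associated with $A-B$ splits once one restricts to the $(2m+1)$-dimensional blocks $E_m$. First I would recall that $A$ acts diagonally on the Fourier decomposition: on $E^{\pm}$ it is multiplication by $\pm\frac{2\pi}{\tau}$ times the identity on the coefficient level (more precisely $A z^{\pm}=\pm\frac{2\pi}{\tau}z^{\pm}$ in the sense of the excerpt), and $A$ vanishes on $E^0=\mathbf{R}^{2n}$. Since $B$ is given by an $L^\infty$ (indeed continuous) matrix, $z\mapsto Bz$ is a compact operator on $E$ by Lemma \ref{l2.1}; hence $A-B$ is a bounded self-adjoint Fredholm operator with essential spectrum reduced to $\{\pm\frac{2\pi}{\tau}\}$, and only finitely many eigenvalues (with finite multiplicity) in the gap $(-\frac{2\pi}{\tau},\frac{2\pi}{\tau})$, accumulating nowhere inside it. The number $d$ is chosen smaller than a quarter of the reciprocal of $\|(A-B)^{\sharp}\|$, which guarantees that $[-d,d]$ is strictly inside that spectral gap, that $0$ is the only spectral point of $A-B$ in $[-d,d]$, and that the $d$-eigenspace of $A-B$ is exactly $\ker(A-B)$.

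Next I would pass to the Galerkin truncation $P_m(A-B)P_m$ on $E_m$ and invoke the standard spectral-convergence fact: as $m\to\infty$, the eigenvalues of $P_m(A-B)P_m$ converge to those of $A-B$ together with the ``escaping'' eigenvalues created by truncating $A$ itself, which all lie at $\pm\frac{2\pi}{\tau}$ in the limit and in particular stay bounded away from $[-d,d]$ for large $m$. Concretely, write $E_m=E_m^+\oplus E^0\oplus E_m^-$; on $E_m^{\pm}$ the operator $P_mAP_m$ has all eigenvalues of modulus at least $\frac{2\pi}{\tau}$, so for $m$ large the only eigenvalues of $P_m(A-B)P_m$ inside $(-\frac{2\pi}{\tau}+\varepsilon,\frac{2\pi}{\tau}-\varepsilon)$ are small perturbations of the finitely many gap-eigenvalues of $A-B$; in particular $\dim M_d^0(P_m(A-B)P_m)=\dim\ker(A-B)=\nu_\tau(B)$, which is the third identity. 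This last equality is really the definition/characterization of $\nu_\tau(B)$ as the dimension of the space of $\tau$-periodic solutions of the linearized system $-J\dot z=B(t)z$, identified with $\ker(A-B)$ inside $E$.

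For the first two identities I would use the relation, coming directly from the Maslov-type index theory (this is essentially how $(i_\tau(B),\nu_\tau(B))$ is defined via the spectral flow / Galerkin approximation in \cite{100} and \cite{101}), that the Morse index of the quadratic form $q(z)=\tfrac12\langle (A-B)z,z\rangle$ restricted to $E_m$ equals $\tfrac12\dim(P_mE)-i_\tau(B)$ asymptotically, and its nullity equals $\nu_\tau(B)$. Counting dimensions on the $(2(2m+1)n)$-dimensional space $P_mE$: the $d$-positive, $d$-negative, and $d$-null subspaces partition it, so once $\dim M_d^-=\tfrac12\dim(P_mE)+i_\tau(B)$ and $\dim M_d^0=\nu_\tau(B)$ are established, the value $\dim M_d^+=\tfrac12\dim(P_mE)-i_\tau(B)-\nu_\tau(B)$ follows automatically by subtraction. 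So the real content is the single asymptotic formula for the negative-eigenvalue count, i.e. the Morse index of the truncated form.

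The main obstacle is exactly establishing that negative-index formula with the cutoff at level $d$ rather than at $0$, uniformly for all large $m$. The subtlety is twofold: one must show (a) no eigenvalue of $P_m(A-B)P_m$ lingers in $(-d,0)\cup(0,d)$ for $m$ large — this uses the quantitative choice $d<\tfrac14\|(A-B)^{\sharp}\|^{-1}$ together with a norm estimate $\|P_m(A-B)P_m - P_m(A-B)P_m\text{ applied to }R(A-B)\|$-type bound, ensuring the small nonzero eigenvalues of the truncation cannot cross below $d$; and (b) the ``spurious'' eigenvalues introduced by truncating the unbounded-like part $A$ contribute exactly $\tfrac12\dim(P_mE) \mp (\text{something independent of }B)$ to the $d$-negative/$d$-positive counts, matching the $B=0$ case where $i_\tau(0)=-n$ and $\nu_\tau(0)=2n$ (or whatever normalization \cite{100} uses), so that the $B$-dependence enters only through $i_\tau(B)$ and $\nu_\tau(B)$. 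Both points are standard in the Maslov-index literature and are presumably proved in \cite{101}, so in the write-up I would state them as consequences of that reference and of the compactness of $z\mapsto Bz$, and reduce the proof to the dimension bookkeeping above.
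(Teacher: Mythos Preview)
The paper does not supply its own proof of this lemma: it is stated with the attribution ``(\cite{101})'' and no proof environment follows. It is simply quoted as a known result of Fei and Qiu, so there is no argument in the paper to compare your proposal against.

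Your sketch is a reasonable outline of the ideas that underlie such a result: the boundedness and self-adjointness of $A$ with spectrum $\{-\tfrac{2\pi}{\tau},0,\tfrac{2\pi}{\tau}\}$, the compactness of $B$ (via Lemma~\ref{l2.1}) making $A-B$ Fredholm, the role of the gap condition $d<\tfrac14\|(A-B)^\sharp\|^{-1}$ in isolating $\ker(A-B)$, and the identification of the Morse index of the truncated form with $\tfrac12\dim(P_mE)+i_\tau(B)$ coming from the Galerkin definition of the Maslov-type index. The dimension bookkeeping you describe is exactly how the three formulas fit together. Since the paper treats this as a black box from \cite{101}, your write-up goes beyond what the paper itself does; if you want to include a proof, you should either cite \cite{101} directly for the key spectral-stability step (your points (a) and (b)) or reproduce the estimate there that controls how eigenvalues of $P_m(A-B)P_m$ in the window $[-d,d]$ converge to those of $A-B$.
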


For $e\in E_{1}\bigcap E^{+}$ with $\| e\|=1$,
set $W=\big\{z\in \text{span}\{e\}\bigoplus E^{-}\bigoplus E^{0}
\big|1\leq\| z\|\leq2,
\| z^{-}\| \leq \| z^{+}+z^{0}\|\big\}$, then we have

\begin{lemma}(\cite{3})\label{3.130}
There exists a constant $\varepsilon_{1}>0$ such that
$$\emph{measure}\left\{t\in[0,T]\big| |z(t)|\geq \varepsilon_{1}\right\}\geq \varepsilon_{1},\ \ \ \ z\in W.$$
\end{lemma}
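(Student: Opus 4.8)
The plan is to prove the statement by contradiction, using the Sobolev embedding bound of Lemma~\ref{l2.1} together with the fact that the part of any $z\in W$ orthogonal to $E^{-}$ lies in the fixed finite-dimensional subspace $F:=\text{span}\{e\}\oplus E^{0}$.

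Assume the conclusion fails. Then for every $\varepsilon>0$ there is a $z\in W$ with $\text{measure}\{t\in[0,\tau]\,|\,|z(t)|\ge\varepsilon\}<\varepsilon$, and taking $\varepsilon=1/k$ yields a sequence $(z_{k})\subset W$ with $\text{measure}\{t\,|\,|z_{k}(t)|\ge 1/k\}<1/k$. Write $z_{k}=z_{k}^{+}+z_{k}^{-}+z_{k}^{0}$ with $z_{k}^{+}\in\text{span}\{e\}$, $z_{k}^{-}\in E^{-}$, $z_{k}^{0}\in E^{0}$. The orthogonal splitting of $E$ gives $\|z_{k}\|^{2}=\|z_{k}^{+}+z_{k}^{0}\|^{2}+\|z_{k}^{-}\|^{2}$, and the defining inequality of $W$, namely $\|z_{k}^{-}\|\le\|z_{k}^{+}+z_{k}^{0}\|$, then forces $\|z_{k}\|^{2}\le 2\|z_{k}^{+}+z_{k}^{0}\|^{2}$; since $\|z_{k}\|\ge1$, this gives $\|z_{k}^{+}+z_{k}^{0}\|\ge 1/\sqrt{2}$. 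As $F$ is finite dimensional and $\|z_{k}^{+}+z_{k}^{0}\|\le\|z_{k}\|\le 2$, after passing to a subsequence we may assume $z_{k}^{+}+z_{k}^{0}\to w$ in $F$ with $\|w\|\ge 1/\sqrt{2}$; in particular $w\ne0$.

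Next I would show $z_{k}\to 0$ in $L^{2}(S_{\tau},\mathbf{R}^{2n})$. Splitting $[0,\tau]$ into $\{\,|z_{k}|<1/k\,\}$ and $\{\,|z_{k}|\ge 1/k\,\}$, applying H\"older's inequality on the latter set, and using $\|z_{k}\|_{L^{4}}\le C_{4}\|z_{k}\|\le 2C_{4}$ from Lemma~\ref{l2.1}, one obtains
$$\|z_{k}\|_{L^{2}}^{2}\le \frac{\tau}{k^{2}}+\|z_{k}\|_{L^{4}}^{2}\,\big(\text{measure}\{|z_{k}|\ge 1/k\}\big)^{1/2}\le \frac{\tau}{k^{2}}+\frac{4C_{4}^{2}}{\sqrt{k}}\longrightarrow 0.$$
Since also $z_{k}^{+}+z_{k}^{0}\to w$ in $E$, hence in $L^{2}$, it follows that $z_{k}^{-}=z_{k}-(z_{k}^{+}+z_{k}^{0})\to -w$ in $L^{2}$. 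Now the crucial point: $e$ carries only the Fourier mode $+1$ and $E^{0}$ only the mode $0$, while every $z_{k}^{-}$ carries only strictly negative modes; hence $z_{k}^{-}$ and $w$ have disjoint Fourier supports, so $\langle z_{k}^{-},w\rangle_{L^{2}}=0$ for all $k$. Letting $k\to\infty$ gives $\langle -w,w\rangle_{L^{2}}=0$, i.e. $w=0$, which contradicts $w\ne 0$. This contradiction establishes the existence of the required constant $\varepsilon_{1}$.

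The main (and essentially only) delicate point is the passage to the limit in $\langle z_{k}^{-},w\rangle_{L^{2}}=0$: it requires \emph{strong} $L^{2}$-convergence of $z_{k}$ to $0$, which is exactly what the smallness-of-measure hypothesis together with the higher integrability $\|z_{k}\|_{L^{4}}\le 2C_{4}$ provides; weak convergence in $E$ alone would not suffice. The remaining ingredients---orthogonality of distinct Fourier modes in both $E$ and $L^{2}$, equivalence of norms on the finite-dimensional space $F$, and Bolzano--Weierstrass---are routine.
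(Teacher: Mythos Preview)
The paper does not supply its own proof of this lemma: it is quoted verbatim from Fei \cite{3} and used as a black box. So there is nothing in the paper to compare your argument against.

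That said, your proof is correct and is essentially the standard argument one finds in the source. The contradiction scheme---extract a sequence $(z_k)$ violating the estimate for $\varepsilon=1/k$, use the constraint $\|z_k^{-}\|\le\|z_k^{+}+z_k^{0}\|$ to force the finite-dimensional component $z_k^{+}+z_k^{0}\in F=\text{span}\{e\}\oplus E^{0}$ to stay bounded away from zero, pass to a convergent subsequence in $F$, and then exploit the smallness of the measure together with the uniform $L^{4}$ bound to drive $\|z_k\|_{L^{2}}\to 0$---is exactly the mechanism behind the original result. Your final step, using $L^{2}$-orthogonality of distinct Fourier blocks to conclude $\langle z_k^{-},w\rangle_{L^{2}}=0$ and hence $\|w\|_{L^{2}}=0$, is clean; note that on $F$ the $E$-norm and the $L^{2}$-norm actually coincide, so the contradiction with $\|w\|\ge 1/\sqrt{2}$ is immediate without even invoking equivalence of norms.
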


Finally, we recall the homologically link theorem in \cite{100}.

\begin{definition}(\cite{120})\label{2.07}
Let $Q$
be a topologically embedded closed $q$-dimensional ball on a Hilbert manifold $M$
and let $S\subset M$ be a closed subset such that
$\partial Q\bigcap S=\emptyset$. We say that $\partial Q$ and $S$ homotopically link if $\varphi(Q)\bigcap S\neq\emptyset$ for
$\varphi\in C(Q, M)$ with $\varphi|_{\partial Q}=\emph{id}|_{\partial Q}$.
\end{definition}

\begin{definition}(\cite{100})\label{2.7}
Let $Q$
be a topologically embedded closed $q$-dimensional ball on a Hilbert manifold $M$
and let $S\subset M$ be a closed subset such that
$\partial Q\bigcap S=\emptyset$. We say that $\partial Q$ and $S$ homologically link if $\partial Q$ is the
support of a non-vanishing homology class in $H_{q-1}(M\backslash S)$.
\end{definition}

\begin{lemma}\label{2.6}
Let $M=M_{1}\bigoplus M_{2}$ be a Hilbert space with dim$M_{2}=q-1$, $S=\partial B_{\mu}\bigcap M_{1}$
and $Q=(\bar{B}_{\nu}\bigcap M_{2})\bigoplus [0,\nu]e$, where $e\in M_{1}$ with $\|e\|=1$ and $\nu>\mu>0$. Let $B_{\mu}$, $B_{\nu}$
be two bounded linear invertible operators on $M$
such that $\nu>\mu\| B_{\nu}^{-1} B_{\mu}\|$ and
$PB_{\mu}^{-1}B_{\nu}: M_{2}\rightarrow M_{2}$ is invertible, where
$P: M\rightarrow M_{2}$ is the orthogonal projection. Then
$B_{\nu}(\partial Q)$ and $B_{\mu}(S)$ homologically link.
\end{lemma}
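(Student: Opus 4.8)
The plan is to reduce the homological linking of $B_\nu(\partial Q)$ and $B_\mu(S)$ to the standard linking of a sphere and a subspace-complement, which is a well-known fact, by exhibiting an explicit homotopy that ``undoes'' the operators $B_\mu$ and $B_\nu$ while keeping the two sets disjoint throughout. First I would recall the model case: if $M = M_1 \oplus M_2$ with $\dim M_2 = q-1$, $S_0 = \partial B_\mu \cap M_1$ and $Q_0 = (\bar B_\nu \cap M_2) \oplus [0,\nu]e$ with $e \in M_1$, $\|e\|=1$, then $\partial Q_0$ supports a nontrivial class in $H_{q-1}(M \setminus S_0)$; indeed $\partial Q_0$ is a $(q-1)$-sphere and the pair $(\partial Q_0, S_0)$ is the classical ``sphere links subspace'' configuration (see \cite{120}), since any $\varphi \in C(Q_0,M)$ fixing $\partial Q_0$ must meet $S_0$ by a degree/Borsuk-type argument, and this homotopy linking upgrades to homology linking in a Hilbert space because $M \setminus S_0$ deformation retracts onto a space detecting the class.

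Next I would set up the comparison. Write $T = B_\mu^{-1} B_\nu \colon M \to M$, a bounded invertible operator. The condition $\nu > \mu \|B_\nu^{-1}B_\mu\| = \mu\|T^{-1}\|$ will be used to guarantee that $B_\mu(S)$ — equivalently $S$ after applying $B_\mu^{-1}$, i.e.\ the set $S$ itself — stays away from the image of $Q$ under $T$ composed with appropriate scalings: explicitly, for $z \in \partial B_\mu \cap M_1$ and $w \in \partial Q_0$ we have $\|w\| \ge \nu$ on the ``cap'' part and we can estimate $\|T w\| \ge \|T^{-1}\|^{-1}\|w\| \ge \nu \|T^{-1}\|^{-1} > \mu = \|z\|$, so $Tw \ne z$; a parallel estimate handles the lateral boundary piece of $\partial Q_0$. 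This shows $T(\partial Q_0) \cap S_0 = \emptyset$, hence applying $B_\mu$: $B_\nu(\partial Q_0) \cap B_\mu(S_0) = \emptyset$. The hypothesis that $P B_\mu^{-1} B_\nu = PT \colon M_2 \to M_2$ is invertible is what I would use to show that $T$ does not collapse the $M_2$-ball to something that fails to ``wrap around'' $M_1$: it guarantees that the projected map $PT|_{M_2}$ has nonzero degree, which is precisely what is needed so that the linking number (intersection number of $T(\partial Q_0)$ with $M_1$, suitably interpreted) is nonzero.

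Then I would carry out the homology computation: consider the straight-line homotopy $h_s = (1-s)\,\mathrm{id} + s\,T$ for $s \in [0,1]$, or rather a homotopy of inclusions $\partial Q_0 \hookrightarrow M \setminus S_0$. One must check $h_s(\partial Q_0) \cap S_0 = \emptyset$ for all $s$; this follows from the same norm estimates, using convexity of the norm and the bound $\nu > \mu\|T^{-1}\|$, together with the invertibility of $PT|_{M_2}$ to prevent the image from passing through $S_0$ via the $M_2$-directions. Since $[\partial Q_0] \ne 0$ in $H_{q-1}(M \setminus S_0)$ by the model case, and $h_s$ is a homotopy within $M \setminus S_0$, we conclude $[T(\partial Q_0)] = [B_\nu(\partial Q_0)] \ne 0$ in $H_{q-1}(M \setminus B_\mu(S_0))$ after the homeomorphism $B_\mu$ of $M$ (which sends $M \setminus S_0$ onto $M \setminus B_\mu(S_0)$ and induces an isomorphism on homology), giving exactly the assertion that $B_\nu(\partial Q)$ and $B_\mu(S)$ homologically link.

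I expect the main obstacle to be the bookkeeping in the homotopy estimates — specifically, verifying that $h_s(\partial Q_0)$ avoids $S_0$ uniformly in $s$ requires handling separately the two faces of the ball-boundary $\partial Q_0 = \big(\partial B_\nu \cap M_2) \times [0,\nu]e\big) \cup \big((\bar B_\nu \cap M_2) \times \{0,\nu\}e\big)$, and on the ``small'' face $\{s=0\}$-end one genuinely needs the interplay of $\nu > \mu\|T^{-1}\|$ with the invertibility of $PT|_{M_2}$ rather than either hypothesis alone. Making the degree-theoretic input rigorous in infinite dimensions (reducing to the finite-dimensional $M_2 \oplus \mathrm{span}\{e\}$ plus a complement via the Leray–Schauder-type argument already implicit in \cite{100,120}) is the technical heart; once the disjointness-under-homotopy is established, the nonvanishing of the homology class is essentially quoted from the model linking lemma.
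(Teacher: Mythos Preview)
Your straight-line homotopy $h_s = (1-s)\,\mathrm{id} + s\,T$ does not, in general, keep $\partial Q$ away from $S$, and the hypotheses are not strong enough to rescue it. A two-dimensional counterexample already breaks it: take $M=\mathbf{R}^2$, $M_1=\mathrm{span}\{e_1\}$, $M_2=\mathrm{span}\{e_2\}$, $e=e_1$, and $T=\mathrm{diag}(1,-1)$. Then $\|T^{-1}\|=1$, so any $\nu>\mu$ works, and $PT|_{M_2}=-1$ is invertible. But $h_{1/2}=\mathrm{diag}(1,0)$ sends the side-face point $(\mu,\nu)\in\partial Q$ to $(\mu,0)\in S$. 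So ``convexity of the norm plus invertibility of $PT|_{M_2}$'' cannot give the disjointness you need; the operator family $(1-s)I+sPT$ on $M_2$ can degenerate at intermediate $s$, and exactly when it does the lateral boundary of $Q$ is pushed through $S$. The homotopy idea is therefore not just a matter of bookkeeping --- it is the wrong deformation.

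The paper takes a different and more robust route: it first proves \emph{homotopy} linking (Definition~\ref{2.07}) directly by a Brouwer degree argument in the finite-dimensional target $\mathbf{R}\times M_2$. Concretely, for any $\varphi\in C(B_\nu(Q),M)$ fixing $B_\nu(\partial Q)$, one studies the map
\[
\psi(t,v)=\big(\|B_\mu^{-1}\varphi B_\nu(te+v)\|,\ PB_\mu^{-1}\varphi B_\nu(te+v)\big),\qquad (t,v)\in[0,\nu]\times(\bar B_\nu\cap M_2),
\]
and shows that $\psi=(\mu,0)$ has a solution. For the unperturbed map $\psi_0$ (i.e.\ $\varphi=\mathrm{id}$) the unique solution is computed explicitly --- this is exactly where both hypotheses $\nu>\mu\|B_\nu^{-1}B_\mu\|$ and invertibility of $PT|_{M_2}$ enter, to place the solution in the interior --- giving $\deg(\psi_0,\cdot,(\mu,0))=\pm1$. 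Since $\psi=\psi_0$ on the boundary, the degree persists for general $\varphi$. Homological linking then follows from homotopical linking by the general implication in Chang's book (Theorem~II.1.2 of \cite{120}). This degree argument sidesteps entirely the need to exhibit a homotopy of cycles in $M\setminus S$.
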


\begin{proof}
It is easy to prove $B_{\nu}(\partial Q)$ and $B_{\mu}(S)$ homotopically link (see \cite{41}).

Indeed, to show $B_{\nu}(Q)\bigcap B_{\mu}(S)\neq\emptyset$,
it is equivalent to proving
$\psi_{0}(t,v)=(\mu,0)$ has a solution in $[0,\nu]\times (\bar{B}_{\nu}\bigcap M_{2})$, where
$$\psi_{0}(t,v)=(\|B_{\mu}^{-1}B_{\nu}(te+v)\|,P_{2}B_{\mu}^{-1}B_{\nu}(te+v)), \ \ \ \ (t,v)\in
[0,\nu]\times (\bar{B}_{\nu}\bigcap M_{2}).$$

Note that $t=\mu\|B_{\mu}^{-1}B_{\nu}e-B_{\mu}^{-1}B_{\nu}B_{0}^{-1}B_{0}e\|^{-1}$ $(<\nu)$ and
$v=-tB_{0}^{-1}B_{0}e$ $(\in B_{\nu}\bigcap M_{2})$ is the unique solution of $\psi_{0}$
in $[0,\nu]\times (\bar{B}_{\nu}\bigcap M_{2})$
, where $B_{0}=P_{2}B_{\mu}^{-1}B_{\nu}$ and
$B_{0}^{-1}$ denotes the inverse of $B_{0}|_{M_{2}}$.
Thus $(\mu,0)\notin \psi_{0}(\partial ([0,\nu]\times (\bar{B}_{\nu}\bigcap M_{2})))$,
deg$(\psi_{0}, (0,\nu)\times (B_{\nu}\bigcap M_{2}), (\mu,0))=\pm 1$
and $B_{\nu}(\partial Q)\bigcap B_{\mu}(S)=\emptyset$.

For $\varphi\in C(B_{\nu}(Q), M)$ with $\varphi|_{B_{\nu}(\partial Q)}=\emph{id}|_{B_{\nu}(\partial Q)}$,
define $\psi: [0,\nu]\times (\bar{B}_{\nu}\bigcap M_{2})\rightarrow \mathbf{R}\times M_{2}$ as
$$\psi(t,v)=(\|B_{\mu}^{-1}\varphi B_{\nu}(te+v)\|,P_{2}B_{\mu}^{-1}\varphi B_{\nu}(te+v)).$$

In order to show $\varphi(B_{\nu}(Q))\bigcap B_{\mu}(S)\neq\emptyset$, it is equivalent to proving
$\psi(t,v)=(\mu,0)$ has a solution in $[0,\nu]\times (\bar{B}_{\nu}\bigcap M_{2})$.

Since $\psi=\psi_{0}$ on $\partial ([0,\nu]\times (\bar{B}_{\nu}\bigcap M_{2}))$, by the Brouwer
degree theory, deg$(\psi, (0,\nu)\times (B_{\nu}\bigcap M_{2}), (\mu,0))$=
deg$(\psi_{0}, (0,\nu)\times (B_{\nu}\bigcap M_{2}), (\mu,0))=\pm 1$, then
$\psi(t,v)=(\mu,0)$ has a solution in $[0,\nu]\times (\bar{B}_{\nu}\bigcap M_{2})$.
Hence, $B_{\nu}(\partial Q)$ and $B_{\mu}(S)$ homotopically link.

From Theorem \uppercase\expandafter{\romannumeral2}.1.2
in \cite{120}, we see $B_{\nu}(\partial Q)$ and $B_{\mu}(S)$ homologically link.
\end{proof}

Let $f$ be a $C^{2}$ functional on a Hilbert manifold $M$.
Recall that the Morse index $m(x)$ of $f$ at a critical point $x$ is the
dimension of a maximal subspace on which $D^{2}f(x)$ is strictly negative definite,
while the large Morse index $m^{*}(x)$ is $m(x)$+dim Ker$D^{2}f(x)$.

\begin{lemma}(\cite{100})\label{2.1}
Let $f$ be a $C^{2}$ functional on a Hilbert manifold $M$ with Fredholm gradient. Let $Q\subset M$
be a topologically embedded closed $q$-dimensional ball and let $S\subset M$ be a closed subset such that
$\partial Q\bigcap S=\emptyset$. Assume that $\partial Q$ and $S$ homologically link. Moreover, assume

(i) $\sup_{\partial Q} f<\inf_{S} f$,

(ii) $f$ satisfies (PS) condition on some open interval containing $[\inf_{S} f,\sup_{Q} f]$.

\noindent Then, if $\Gamma$ denotes the set of all $q$-chains in $M$ whose boundary has support $\partial Q$,
the number $c=\inf_{\xi\in \Gamma}\sup_{|\xi|} f\in [\inf_{S} f,\sup_{Q} f]$ is a critical value of $f$,
where $|\xi|$ denotes the support of the chain $\xi$.
Moreover, $f$ has a critical point $\bar{x}$ such that $f(\bar{x})=c$ and $m(\bar{x})\leq q\leq m^{*}(\bar{x})$.
\end{lemma}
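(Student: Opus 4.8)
The plan is to run the classical minimax argument for $c$ while carrying Morse-theoretic data through critical groups, so that the homological link in dimension $q-1$ produces a nonvanishing relative homology group of sublevel sets in dimension exactly $q$, which then confines the Morse index of a critical point at level $c$ to $[m(\bar x),m^*(\bar x)]$. I would begin by checking that $c$ is well defined and that $c\in[\inf_S f,\sup_Q f]$. Since $Q$ is itself a $q$-chain with boundary support $\partial Q$, we have $Q\in\Gamma$, hence $c\le\sup_Q f$. For the lower bound, if some $\xi\in\Gamma$ satisfied $|\xi|\cap S=\emptyset$, then $\xi$ would be a $q$-chain in $M\setminus S$ whose boundary is the $(q-1)$-cycle $\partial Q$, so $[\partial Q]=0$ in $H_{q-1}(M\setminus S)$, contradicting the hypothesis that $\partial Q$ and $S$ homologically link; thus every $\xi\in\Gamma$ meets $S$ and $\sup_{|\xi|}f\ge\inf_S f$, giving $c\ge\inf_S f$. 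Hypothesis (i) then forces $\sup_{\partial Q}f<c$, which is precisely what lets one deform near-optimal chains while keeping their boundary on $\partial Q$, so the deformed chains remain in $\Gamma$.

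The next step is to translate the minimax into relative homology of the sublevel sets $f^a=\{x\in M:f(x)\le a\}$. Fix $\varepsilon>0$ small enough that $\sup_{\partial Q}f<c-\varepsilon$. By the definition of $c$ there is $\xi\in\Gamma$ with $|\xi|\subset f^{c+\varepsilon}$, hence $[\partial Q]=0$ in $H_{q-1}(f^{c+\varepsilon})$; on the other hand $[\partial Q]\ne 0$ in $H_{q-1}(f^{c-\varepsilon})$, since a $q$-chain contained in $f^{c-\varepsilon}$ with boundary $\partial Q$ would belong to $\Gamma$ with sup-value $\le c-\varepsilon<c$. So the map $H_{q-1}(f^{c-\varepsilon})\to H_{q-1}(f^{c+\varepsilon})$ induced by inclusion is not injective, and the long exact sequence of the pair $(f^{c+\varepsilon},f^{c-\varepsilon})$ yields $H_q(f^{c+\varepsilon},f^{c-\varepsilon})\ne 0$. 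If $c$ were a regular value, then by (PS) (which also rules out critical values accumulating at $c$) one could shrink $\varepsilon$ so that $[c-\varepsilon,c+\varepsilon]$ is free of critical values; the deformation lemma would then make $f^{c+\varepsilon}$ deformation retract onto $f^{c-\varepsilon}$, killing the group above — a contradiction. Hence $c$ is a critical value.

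To extract the critical point with the stated index I would use critical groups. Since the gradient of $f$ is Fredholm, each isolated critical point $x$ at level $c$ has finite Morse index $m(x)$ and finite nullity, hence finite $m^*(x)$, and well-defined critical groups $C_\ast(f,x)$; because $D^2f(x)$ is self-adjoint Fredholm the Gromoll--Meyer splitting lemma applies and the shifting theorem gives $C_k(f,x)=0$ whenever $k\notin[m(x),m^*(x)]$. Using (PS) and the deformation lemmas, $H_q(f^{c+\varepsilon},f^{c-\varepsilon})\cong\bigoplus_{x\in K_c}C_q(f,x)$, where $K_c$ is the (compact, by (PS)) critical set at level $c$; when $K_c$ is not a finite union of isolated points one first replaces $f$ near $K_c$ by a Marino--Prodi perturbation with only nondegenerate critical points there, proves the statement for it, and passes to the limit. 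Since the left-hand side is nonzero, some $\bar x\in K_c$ has $C_q(f,\bar x)\ne 0$, and therefore $m(\bar x)\le q\le m^*(\bar x)$, with $f(\bar x)=c\in[\inf_S f,\sup_Q f]$.

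I expect the only genuinely delicate point to be the last step: legitimately identifying $H_q(f^{c+\varepsilon},f^{c-\varepsilon})$ with $\bigoplus_{x\in K_c}C_q(f,x)$ when the critical set at level $c$ may fail to be discrete, which is where the Fredholm hypothesis, a Marino--Prodi perturbation, and a careful limiting argument are needed, together with the Gromoll--Meyer shifting theorem in the Hilbert-manifold setting. By contrast, the topological bookkeeping of the first two steps — the two homology claims for $[\partial Q]$ and the exact-sequence conclusion — is routine once the homological link hypothesis and the (PS)-based deformation lemma are available.
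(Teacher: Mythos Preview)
The paper does not supply its own proof of this lemma; it is quoted from Abbondandolo's monograph \cite{100} (and Remark~\ref{2.01} points specifically to Theorem~4.1.7 there for the variant under the (C) condition). Your sketch is correct and is essentially the argument carried out in that reference: the minimax level $c$ is located via the homological link, the long exact sequence of the pair $(f^{c+\varepsilon},f^{c-\varepsilon})$ yields $H_q(f^{c+\varepsilon},f^{c-\varepsilon})\neq 0$, and then the Gromoll--Meyer splitting/shifting theorem (valid under the Fredholm hypothesis) combined with a Marino--Prodi perturbation produces a critical point $\bar x$ with $C_q(f,\bar x)\neq 0$, hence $m(\bar x)\le q\le m^*(\bar x)$.
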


\begin{remark}\label{2.01}
If $M$ is a finite dimensional Hilbert space, and
$f$ satisfies (C) condition instead of (PS) condition, the above theorem still holds, the proof is the same
as that of Theorem 4.1.7 in \cite{100} (see \cite{111} for results obtained under (C) condition).
\end{remark}

Recall that the functional $f$ satisfies the so called Cerami  condition ((C) condition for short) on $J\subset \mathbf{R}\bigcup \{\pm\infty\}$ if $\{z_{m}\}\subset M$
such that $f(z_{m})\rightarrow c\in J$ and
$(1+\| z_{m}\|)\|\nabla f(z_{m})\|\rightarrow 0$ as $m\rightarrow +\infty$ has a convergent subsequence.

\section{Proofs of the Main Results}
\setcounter{equation}{0}

{For simplicity, we first give a proof of Corollary \ref{1.000}.}

Define $f(z)=\frac{1}{2}\langle Az,z\rangle-\int_{0}^{\tau}H(t,z)\text{d}t$, $z\in E$,
by (H3), we have $f\in C^{2}(E,\mathbf{R})$. As usual, finding periodic solutions of the
system (\ref{1.1}) converts to looking for critical points of $f$.

Let $f_{m}=f|_{E_{m}}$, $X_{m}=E_{m}^{-}\bigoplus E^{0}$ and $Y_{m}=E_{m}^{+}$.
Now we check the conditions in Lemma \ref{2.1} for $f_{m}$ when $H$ satisfies (H1)-(H5).
The proofs are similar to those in \cite{110, 109}.

\begin{lemma}\label{3.11}
The functional $f$ satisfies (C)$^{*}$ condition with respect to
$\{E_{m} | m=1, 2, \cdots\}$, that is,
any sequence $\{z_{m}\}$ such that $z_{m}\in E_{m}$, $\{f_{m}(z_{m})\}$ is bounded and
$(1+\| z_{m}\|)\|\nabla f_{m}(z_{m})\|\rightarrow 0$ as $m\rightarrow +\infty$ has a convergent subsequence.
\end{lemma}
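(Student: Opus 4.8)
**Proof proposal for Lemma 3.11 ((C)$^*$ condition).**

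The plan is to follow the standard two-step argument for Cerami-type compactness conditions adapted to the Galerkin approximation scheme: first establish that any such sequence $\{z_m\}$ is bounded in $E$, then extract a strongly convergent subsequence by splitting into the $E^+$, $E^-$, $E^0$ components and exploiting the compact embedding of Lemma \ref{l2.1} together with the fact that $A$ acts as $\pm\frac{2\pi}{\tau}\,\mathrm{id}$ on $E^\pm$. For the boundedness step I would test $\nabla f_m(z_m)$ against a suitable combination of $z_m$ and the anisotropic vector field applied to $z_m$. Concretely, write $\langle \nabla f_m(z_m), V_1(z_m)\rangle$ (and, using the hypothesis (H3)$'$ from the Corollary's actual hypotheses (H1)–(H5), also $\langle\nabla f_m(z_m), V_2(z_m)\rangle$); since $V_1 = V(\alpha,\beta)$ with $\alpha_i+\beta_i=1$ one has $\langle Az, V_1(z)\rangle = \langle Az, z\rangle$ because the off-diagonal structure of $J$ pairs the $p_i$-block with the $q_i$-block and $\alpha_i+\beta_i=1$ restores the full quadratic form. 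Hence $\langle \nabla f_m(z_m), V_1(z_m)\rangle = \langle Az_m, z_m\rangle - \int_0^\tau H'_z(t,z_m)\cdot V_1(z_m)\,dt$, and subtracting $2f_m(z_m) = \langle Az_m,z_m\rangle - 2\int_0^\tau H\,dt$ yields $\int_0^\tau\bigl(H'_z(t,z_m)\cdot V_1(z_m) - H(t,z_m)\bigr)\,dt = 2f_m(z_m) - \langle\nabla f_m(z_m), V_1(z_m)\rangle$. By (H2) the left side is bounded below by $c_1\|z_m\|_{L^\beta}^\beta - c_2\tau$, while the right side is $O(1) + o(1)\|z_m\|$ (using $(1+\|z_m\|)\|\nabla f_m(z_m)\|\to 0$ and $\|V_1(z_m)\|\le C\|z_m\|$). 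This gives an a priori bound $\|z_m\|_{L^\beta}^\beta \le C(1+\|z_m\|)$.

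Next I would upgrade this $L^\beta$ bound to a bound on $\|z_m\|$ in $E$. Split $z_m = z_m^+ + z_m^- + z_m^0$. Testing $\nabla f_m(z_m)$ against $z_m^+ - z_m^-$ gives $\frac{2\pi}{\tau}(\|z_m^+\|^2 + \|z_m^-\|^2) = o(1)\,\|z_m\| + \int_0^\tau H'_z(t,z_m)\cdot(z_m^+ - z_m^-)\,dt$. The growth hypothesis (H3) (together with (H2), (H5)) controls $|H'_z(t,z)|$ by $C(|z|^{\lambda} + 1)$ with $\lambda < 1+\beta$, so $\|H'_z(t,z_m)\|_{L^{s}}$ for an appropriate dual exponent $s$ is bounded by $C(\|z_m\|_{L^{\lambda s'}}^{\lambda} + 1)$; choosing exponents so that $\lambda s' \le \beta\cdot(\text{something interpolated})$ and invoking the embedding $E \hookrightarrow L^r$ for all $r$, one obtains $\|z_m^\pm\|^2 \le o(1)\|z_m\| + C\|z_m\|_{L^\beta}^{\lambda\theta}\|z_m\|^{1-\theta}$-type estimates for suitable $\theta \in (0,1)$; combined with the $L^\beta$ bound $\|z_m\|_{L^\beta}\le C(1+\|z_m\|)^{1/\beta}$ and the restriction $\lambda < 1+\beta$, a Young-inequality absorption argument forces $\|z_m^\pm\| \le C$. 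For the finite-dimensional piece $z_m^0 \in \mathbf{R}^{2n}$, I would use (H2) once more: if $\|z_m^0\|\to\infty$ while $\|z_m^\pm\|$ stay bounded, then $|z_m(t)|\to\infty$ on a set of positive measure, and the coercivity built into (H2)/(H5) contradicts the $L^\beta$ bound (or directly contradicts boundedness of $f_m(z_m)$ from below combined with the tested identity). Hence $\{z_m\}$ is bounded in $E$.

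With boundedness in hand, compactness is routine: up to a subsequence $z_m \rightharpoonup z$ weakly in $E$, hence strongly in every $L^s$ by Lemma \ref{l2.1}. From $\nabla f_m(z_m) = P_m z_m^+ - P_m z_m^- + P_m z_m^0 - P_m K(z_m)\to 0$, where $K$ is the (compact) gradient of $z\mapsto\int_0^\tau H(t,z)\,dt$, one reads off that $z_m^+$ (resp.\ $z_m^-$, $z_m^0$) converges strongly because each equals $\pm P_m K(z_m)$ plus a term tending to $0$, and $K(z_m)\to K(z)$ strongly by the $L^s$-convergence together with (H3). Therefore $z_m \to z$ strongly in $E$. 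The main obstacle is the boundedness step: unlike the classical Ambrosetti–Rabinowitz superquadratic setting, the anisotropic condition (H2) only gives coercivity in the $L^\beta$-norm with $\beta$ possibly close to $1$, so care is needed in choosing the interpolation exponents and verifying that the inequality $\lambda < 1+\beta$ (equivalently, the hypothesis $\max_i\{\cdots\} < 1+\beta$ in Theorem \ref{1.20}) is exactly what makes the absorption of the $\|H'_z(t,z_m)\|$-term against $\|z_m\|_{L^\beta}^\beta$ succeed; I would expect to spend most of the work pinning down these exponents and handling the anisotropy component by component rather than with a single global power of $|z|$.
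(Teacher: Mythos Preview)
Your overall strategy coincides with the paper's: test $\nabla f_m$ against $V_1(z_m)$ and invoke (H2) to control $\|z_m\|_{L^\beta}$, then test against $z_m^\pm$ and use the growth bound from (H3) together with an interpolation to control $\|z_m^\pm\|$, handle $z_m^0$ via the $L^\beta$ bound, and finish by the standard compactness argument (which the paper in fact omits, saying only ``we only need to prove $\{z_m\}$ is bounded'').

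There is, however, one slip and one genuine gap in your execution. The slip: integration by parts gives $\langle Az, V_1(z)\rangle = \tfrac{1}{2}\langle Az, z\rangle$, not $\langle Az, z\rangle$; the clean identity is therefore
\[
f_m(z_m) - \langle \nabla f_m(z_m), V_1(z_m)\rangle \;=\; \int_0^\tau\bigl(H'_z(t,z_m)\cdot V_1(z_m) - H(t,z_m)\bigr)\,dt,
\]
as the paper writes. The gap: you record the right-hand side of this identity as ``$O(1)+o(1)\|z_m\|$'' and conclude only $\|z_m\|_{L^\beta}^\beta \le C(1+\|z_m\|)$. But the Cerami hypothesis says $(1+\|z_m\|)\|\nabla f_m(z_m)\|\to 0$, so $\bigl|\langle\nabla f_m(z_m),V_1(z_m)\rangle\bigr|\le C\|\nabla f_m(z_m)\|\,\|z_m\|\to 0$; hence $\{\|z_m\|_{L^\beta}\}$ is \emph{bounded} outright. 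This matters: with only your weaker bound, the interpolation of $\int|H'_z(t,z_m)||z_m^\pm|\,dt$ against $|H'_z|\le C(|z|^\lambda+1)$ closes only when $\lambda<\beta$, whereas the hypothesis in (H3) is merely $\lambda<1+\beta$. Once $\|z_m\|_{L^\beta}$ is known to be bounded, the paper's explicit choice $p=\frac{2\beta+1}{2\lambda-1}$ yields
\[
\int_0^\tau |H'_z(t,z_m)||z_m^\pm|\,dt \;\le\; c\,\|z_m\|^{\lambda-\beta/p}\|z_m^\pm\| + c\,\|z_m^\pm\|,\qquad \lambda-\tfrac{\beta}{p}=\tfrac{\lambda+\beta}{2\beta+1}<1,
\]
and dividing the resulting inequality for $\pm\langle\nabla f_m(z_m),z_m^\pm\rangle$ by $\|z_m\|\,\|z_m^\pm\|$ gives $\|z_m^\pm\|/\|z_m\|\to 0$; the boundedness of $\|z_m\|_{L^\beta}$ then forces $|z_m^0|/\|z_m\|\to 0$, contradicting $\|z_m^+\|^2+\|z_m^-\|^2+\tau|z_m^0|^2=\|z_m\|^2$. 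No component-by-component anisotropic estimate is needed for this lemma---(H2) and (H3) are phrased in terms of $|z|$ and that is all that is used.
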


\begin{proof}
Let $\{z_{m}\}$ be such a sequence, we only need to prove $\{z_{m}\}$ is bounded.
Otherwise, we may suppose $\|z_{m}\|\rightarrow +\infty$ as $m\rightarrow +\infty$.

Note that for $z\in E_{m}$,  we have
$\nabla f_{m}(z)=P_{m}\nabla f(z)$. By using $\alpha_i+\beta_i=1$ and the integration by parts, there hold
$$f_{m}(z)-\langle \nabla f_{m}(z),V_{1}(z)\rangle=f(z)-\langle \nabla f(z),V_{1}(z)\rangle
=\int_{0}^{\tau}(H'_{z}(t,z)\cdot V_{1}(z)-H(t,z))\text{d}t,$$ where    $V_1(z)$ is defined  in (H2).
By (H2), we have
\begin{eqnarray}\label{3.16}
&&f_{m}(z_{m})-\langle \nabla f_{m}(z_{m}),V_{1}(z_{m})\rangle\nonumber\\
&=&\int_{0}^{\tau}(H'_{z}(t,z_{m})\cdot V_{1}(z_{m})-H(t,z_{m}))\text{d}t\nonumber\\
&\geq&c_{1}\|z_{m}\|_{L^{\beta}}^{\beta}-\tau c_{2}.
\end{eqnarray}
Hence $\{\|z_{m}\|_{L^{\beta}}\}$ is bounded.

By using the constants $\beta,\; \lambda$ defined in (H2), (H3), we set $p=\frac{2\beta+1}{2\lambda-1}$. It is obvious that $p>1$.
Take $q$ such that $\frac 1p+\frac 1q=1$.
It is easy to see  $\lambda-\frac{\beta}{p}=\frac{\lambda+\beta}{2\beta+1}<1$
and $2q(\lambda-\frac{\beta}{p})=\frac{\lambda+\beta}{\beta+1-\lambda}>1$.
By (H3), Lemma \ref{l2.1} and (\ref{3.16}), we have
\begin{eqnarray}\label{3.20}
&&\int_{0}^{\tau}|H'_{z}(t,z_{m})\cdot z^{\pm}_{m}|\text{d}t\nonumber\\
&\leq&c_{3}\int_{0}^{\tau}|z_{m}|^{\lambda}|z^{\pm}_{m}|\text{d}t+c_{4}\|z^{\pm}_{m}\|\nonumber\\
&=&c_{3}\int_{0}^{\tau}|z_{m}|^{\frac{\beta}{p}}|z|^{\lambda-\frac{\beta}{p}}|z^{\pm}_{m}|\text{d}t+c_{4}\|z^{\pm}_{m}\|\nonumber\\
&\leq&c_{3}\left(\int^{\tau}_{0}|z_{m}|^{\beta}\text{d}t\right)^{\frac{1}{p}}
\left(\int^{\tau}_{0}|z_{m}|^{\left(\lambda-\frac{\beta}{p}\right)q}
|z^{\pm}_{m}|^{q}\text{d}t\right)^{\frac{1}{q}}+c_{4}\|z^{\pm}_{m}\|\nonumber\\
&\leq&c_{5}\left(\int^{\tau}_{0}|z_{m}|^{\left(\lambda-\frac{\beta}{p}\right)2q}\text{d}t\right)^{\frac{1}{2q}}
\left(\int^{\tau}_{0}|z^{\pm}_{m}|^{2q}\text{d}t\right)^{\frac{1}{2q}}+c_{4}\|z^{\pm}_{m}\|\nonumber\\
&\leq&c_{6}\|z_{m}\|^{\lambda-\frac{\beta}{p}}\|z^{\pm}_{m}\|+c_{6}\|z^{\pm}_{m}\|,
\end{eqnarray}
where $c_i>0$ are suitable constants.

By (\ref{3.20}), we have
\begin{eqnarray}
\|\nabla f_{m}(z_{m})\|\cdot \|z_m^{\pm}\|
&\geq&\pm \langle\nabla f_{m}(z_{m}), z_{m}^{\pm}\rangle\nonumber\\
&=&\pm\langle Az_{m}, z_{m}^{\pm}\rangle
\mp\int_{0}^{\tau}H'_{z}(t,z_{m})\cdot z^{\pm}_{m}\text{d}t\nonumber\\
&\geq&\frac{2\pi}{\tau}\|z^{\pm}_{m}\|^{2}-
c_{6}\|z_{m}\|^{\lambda-\frac{\beta}{p}}\|z^{\pm}_{m}\|-c_{6}\|z^{\pm}_{m}\|.
\end{eqnarray}
We can suppose that there are only finitely many $z_m^{\pm}=0$. Dividing the two sides by $\|z_m\|\cdot \|z_m^{\pm}\|$, it implies $\frac{\|z^{\pm}_{m}\|}{\|z_{m}\|}\rightarrow 0$ as $m\rightarrow +\infty$.
By (\ref{3.16}), we see $\frac{|z_m^{0}|}{\|z_{m}\|}\rightarrow 0$, $m\rightarrow +\infty$.
From $\frac{\|z^{+}_{m}\|^{2}+\|z^{-}_{m}\|^{2}+\tau|z_m^{0}|^{2}}{\|z_{m}\|^{2}}=1$, we obtain a contradiction.
\end{proof}

We note that if $f$ satisfies (C)$^{*}$ condition on $E$,
then $f_{m}$ satisfies (C) condition on $E_{m}$.

There exists a constant $\eta>0$ such that $\tilde\sigma_{i}=\frac{\eta {\sigma}_{i}}{ {\sigma}_{i}+ {\tau}_{i}}\geq1$
and $\tilde\tau_{i}=\frac{\eta{\tau}_{i}}{ {\sigma}_{i}+ {\tau}_{i}}\geq1$.
For $\rho>0$ and $z=(p_1,\cdots, p_{n},q_1, \cdots, q_{n})\in E$, we set $B_{\rho}(z)=
(\rho^{\tilde\tau_{1}-1}p_{1},\cdots, \rho^{\tilde\tau_{n}-1}p_{n}, \rho^{\tilde\sigma_{1}-1}q_{1}\cdots, \rho^{\tilde\sigma_{n}-1}q_{n}).$
We note that $B_{\rho}$ is a linear  bounded and invertible operator and $\|B_\rho\|\leq1$, if $\rho\leq1$.

For $z=z^{+}+z^{0}+z^{-}\in E$, we have
\begin{equation}\label{3.1}
 \langle AB_{\rho}z,B_{\rho}z\rangle
=\rho^{\eta-2}\langle Az,z\rangle=\frac{2\pi}{\tau}\rho^{\eta-2}(\| z^{+}\|^{2}
-\| z^{-}\|^{2}).
\end{equation}

\begin{lemma}\label{3.12}
There exist $\mu\in (0,1)$ and $\delta>0$ independent of $m$ such that $\inf_{B_{\mu}(S_{m})}f_{m}\geq \delta$,
where $S_{m}=S\cap Y_{m}$ and $S=\{z\in E^{+} | \| z\|=\mu\}$.
\end{lemma}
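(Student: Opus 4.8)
The plan is to show that the rescaled sphere $B_\mu(S_m)$ sits at a uniformly positive level of $f_m$ for a suitably small radius $\mu$, using the quadratic-form identity \eqref{3.1} together with the subquadratic behavior of $H$ near zero encoded in (H4). First I would observe that for $z\in S_m\subset E^+$ we have $z=z^+$, so by \eqref{3.1},
\[
\langle A B_\rho z, B_\rho z\rangle = \frac{2\pi}{\tau}\rho^{\eta-2}\|z\|^2 = \frac{2\pi}{\tau}\rho^{\eta-2}\mu^2,
\]
when $\rho=\mu$. Thus the quadratic part of $f_m(B_\mu z)$ equals $\tfrac{\pi}{\tau}\mu^{\eta}$ (up to the factor $\tfrac12$), a positive quantity of order $\mu^{\eta}$. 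The task is then to dominate the nonlinear term $\int_0^\tau H(t, B_\mu z)\,\mathrm{d}t$ by a smaller quantity.

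The key step is the estimate of $\int_0^\tau H(t,B_\mu z)\,\mathrm{d}t$. By (H4), for every $\varepsilon>0$ there is $r_\varepsilon>0$ with $H(t,w)\le \varepsilon\, w(w)=\varepsilon\sum_{i=1}^n\bigl(|w_{p_i}|^{1+\sigma_i/\tau_i}+|w_{q_i}|^{1+\tau_i/\sigma_i}\bigr)$ for $|w|\le r_\varepsilon$; combined with the $C^2$ bound from (H3) on the region $|w|\ge r_\varepsilon$ (which controls $H$ by a power $|w|^{\lambda+1}$ plus lower order, but near zero after scaling only the leading term matters), one gets a global bound $H(t,w)\le \varepsilon\, w(w) + C_\varepsilon |w|^{\lambda+1}$. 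Now the point of the definition $B_\rho(z)=(\rho^{\tilde\tau_1-1}p_1,\dots,\rho^{\tilde\sigma_n-1}q_n)$ with $\tilde\sigma_i=\eta\sigma_i/(\sigma_i+\tau_i)$, $\tilde\tau_i=\eta\tau_i/(\sigma_i+\tau_i)$ is exactly that each monomial of $w(\cdot)$ scales homogeneously: $|\rho^{\tilde\tau_i-1}p_i|^{1+\sigma_i/\tau_i}=\rho^{(\tilde\tau_i-1)(1+\sigma_i/\tau_i)}|p_i|^{1+\sigma_i/\tau_i}$, and a direct computation gives $(\tilde\tau_i-1)(1+\sigma_i/\tau_i)=\tilde\tau_i+\tilde\sigma_i-1-\sigma_i/\tau_i\cdot\!\!$ — more precisely one checks the exponent equals $\eta-(1+\sigma_i/\tau_i)$, so that $w(B_\rho z)=\sum_i \rho^{\eta-(1+\sigma_i/\tau_i)}|p_i|^{1+\sigma_i/\tau_i}+\cdots$; since each $1+\sigma_i/\tau_i$ and $1+\tau_i/\sigma_i$ can be absorbed, after using $\|B_\rho\|\le1$ and the embedding Lemma \ref{l2.1} one obtains $\int_0^\tau H(t,B_\mu z)\,\mathrm{d}t\le \varepsilon\, c\,\mu^{\eta}\|z\|^{*}+C_\varepsilon c'\mu^{\text{(something)}>\eta}\|z\|^{\lambda+1}$, where the second exponent strictly exceeds $\eta$ because $\lambda<1+\beta$ and the $\tilde\sigma_i,\tilde\tau_i\ge1$ force each scaling factor in the higher-order term to beat $\rho^\eta$ for small $\rho$. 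Hence on $\|z\|=\mu$,
\[
f_m(B_\mu z)\ \ge\ \frac{\pi}{\tau}\mu^{\eta}-\varepsilon c\,\mu^{\eta}-C_\varepsilon c'\,\mu^{\eta+\kappa}
\]
for some $\kappa>0$; choosing $\varepsilon$ small, then $\mu$ small, the right side is $\ge \delta:=\tfrac{\pi}{2\tau}\mu^{\eta}>0$, independent of $m$.

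The main obstacle I anticipate is bookkeeping the scaling exponents: one must verify carefully that under $B_\rho$ the anisotropic monomials in $w(\cdot)$ all acquire the common factor $\rho^{\eta-(1+\sigma_i/\tau_i)}$ (so the dangerous term scales like the quadratic part does, and can be killed by the small constant $\varepsilon$), while the remainder term coming from the $|w|^{\lambda+1}$ bound in (H3) picks up a strictly larger power of $\rho$ — this uses both $\lambda<1+\beta$ and the hypothesis $\max_i\{\sigma_i/\tau_i,\tau_i/\sigma_i\}<\lambda<1+\beta$ from (H3). A secondary point is that all constants ($c,c',C_\varepsilon$, the embedding constants $C_s$) must be chosen independent of $m$, which is immediate since Lemma \ref{l2.1} and \eqref{3.1} hold on all of $E$ and $P_m$ is a norm-one projection; then $\mu,\delta$ depend only on these and on the fixed data in (H1)--(H5).
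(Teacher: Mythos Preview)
Your proposal is correct and follows essentially the same line as the paper's proof: reduce to $\inf_{B_\mu(S)}f\ge\delta$, use (H4) combined with the growth bound from (H3) to get $H(t,w)\le \varepsilon\,\omega(w)+M_\varepsilon\sum_i(|p_i|^{1+\lambda}+|q_i|^{1+\lambda})$, compute the scaling of each term under $B_\mu$ via the identities $\tilde\sigma_i+\tilde\tau_i=\eta$ and $\tilde\tau_i\cdot\sigma_i/\tau_i=\tilde\sigma_i$, and conclude by choosing $\varepsilon$ and then $\mu$ small. The paper carries out the exponent bookkeeping explicitly, obtaining
\[
\int_0^\tau H(t,B_\mu z)\,\mathrm{d}t \le c_7\,\mu^{\eta}\Bigl[2\varepsilon+M_\varepsilon\sum_{i=1}^n\bigl(\mu^{\tilde\tau_i(\lambda-\sigma_i/\tau_i)}+\mu^{\tilde\sigma_i(\lambda-\tau_i/\sigma_i)}\bigr)\Bigr],
\]
so your ``$\kappa>0$'' is $\min_i\{\tilde\tau_i(\lambda-\sigma_i/\tau_i),\,\tilde\sigma_i(\lambda-\tau_i/\sigma_i)\}$, positive precisely by the lower bound $\lambda>\max_i\{\sigma_i/\tau_i,\tau_i/\sigma_i\}$ in (H3); the upper bound $\lambda<1+\beta$ is not actually needed here. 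One cosmetic point: bounding the remainder componentwise as $\sum_i(|p_i|^{1+\lambda}+|q_i|^{1+\lambda})$ rather than $|w|^{\lambda+1}$ makes the scaling under $B_\mu$ transparent, since each monomial then picks up a single power of $\mu$.
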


\begin{proof} It suffices to show $\inf_{B_{\mu}(S)}f\geq \delta$.

By (H3) and (H4), for any $\varepsilon>0$, there exists $M_{\varepsilon}>0$ such that
\begin{equation}\label{3.2}
H(t,z)\leq\varepsilon\sum_{i=1}^{n}
 \left(|p_{i}|^{1+\frac{\sigma_{i}}{\tau_{i}}}+|q_{i}|^{1+\frac{\tau_{i}}{\sigma_{i}}}\right)
 +M_{\varepsilon}\sum_{i=1}^{n}
 \left(|p_{i}|^{1+\lambda}+|q_{i}|^{1+\lambda}\right),\ \ \ \ (t,z)\in \mathbf{R}\times\mathbf{R}^{2n}.
\end{equation}

By (\ref{3.2}), for $z=(p_{1}, \cdots, p_{n}, q_{1}, \cdots, q_{n})\in E$, $\| z\|=\mu$, we have
\begin{eqnarray}\label{3.3}
&&\int_{0}^{\tau}H(t,B_{\mu}z)\text{d}t\nonumber\\
&\leq&\varepsilon\sum_{i=1}^{n}
 \int_{0}^{\tau}\left(|\mu^{\tilde\tau_{i}-1}p_{i}|^{1+\frac{\sigma_{i}}{\tau_{i}}}
 +|\mu^{\tilde\sigma_{i}-1}q_{i}|^{1+\frac{\tau_{i}}{\sigma_{i}}}\right)\text{d}t
 +M_{\varepsilon}\sum_{i=1}^{n}
 \int_{0}^{\tau}\left(|\mu^{\tilde\tau_{i}-1}p_{i}|^{1+\lambda}+|\mu^{\tilde\sigma_{i}-1}q_{i}|^{1+\lambda}\right)\text{d}t\nonumber\\
&\leq&\varepsilon\sum_{i=1}^{n}
 \int_{0}^{\tau}\left(\mu^{(\tilde\tau_{i}-1)(1+\frac{\sigma_{i}}{\tau_{i}})}|z|^{1+\frac{\sigma_{i}}{\tau_{i}}}
 +\mu^{(\tilde\sigma_{i}-1)(1+\frac{\tau_{i}}{\sigma_{i}})}|z|^{1+\frac{\tau_{i}}{\sigma_{i}}}\right)\text{d}t\nonumber\\
&&+M_{\varepsilon}\sum_{i=1}^{n}
 \int_{0}^{\tau}\left(\mu^{(\tilde\tau_{i}-1)(1+\lambda)}|z|^{1+\lambda}
 +\mu^{(\tilde\sigma_{i}-1)(1+\lambda)}|z|^{1+\lambda}\right)\text{d}t\nonumber\\
&\leq&2\varepsilon\mu^{\eta}\sum_{i=1}^{n}C(\sigma_{i},\tau_{i})+
M_{\varepsilon}\mu^{\eta}\sum_{i=1}^{n}C(\lambda)
\left(\mu^{\tilde\tau_{i}(\lambda-\frac{\sigma_{i}}{\tau_{i}})}+\mu^{\tilde\sigma_{i}(\lambda-\frac{\tau_{i}}{\sigma_{i}})}\right)\nonumber\\
&\leq&c_{7}\mu^{\eta}\left[2\varepsilon+M_{\varepsilon}\sum_{i=1}^{n}
\left(\mu^{\tilde\tau_{i}(\lambda-\frac{\sigma_{i}}{\tau_{i}})}+
\mu^{\tilde\sigma_{i}(\lambda-\frac{\tau_{i}}{\sigma_{i}})}\right)\right],
\end{eqnarray}
where $C(\sigma_{i},\tau_{i}),\;C(\lambda)>0$ are the embedding constants.

By (\ref{3.1}) and (\ref{3.3}), for $z\in E^{+}$, $\| z\|=\mu$, we have
\begin{eqnarray}\label{3.4}
f(B_{\mu}z)&=&\frac{1}{2}\langle AB_{\mu}z,B_{\mu}z\rangle-\int_{0}^{\tau}H(t,B_{\mu}z)\text{d}t\nonumber\\
&\geq&\frac{\pi}{\tau}\mu^{\eta}-c_{7}\mu^{\eta}\left[2\varepsilon+M_{\varepsilon}\sum_{i=1}^{n}
\left(\mu^{\tilde\tau_{i}(\lambda-\frac{\sigma_{i}}{\tau_{i}})}+\mu^{\tilde\sigma_{i}(\lambda-\frac{\tau_{i}}{\sigma_{i}})}\right)\right].
\end{eqnarray}

Choose $\varepsilon>0$ and $0<\mu<1$ so small that $f(B_{\mu}z)\geq
\delta:=\frac{\pi}{3\tau}\mu^{\eta}$ for $z\in E^{+}$ and $\| z\|=\mu$.
Thus $\inf_{B_{\mu}(S)}f\geq \delta>0$.
\end{proof}

\begin{lemma}\label{l3.170} Set $Q=[\bar{B}_{\nu}\bigcap (E^{-}\bigoplus E^{0})]\bigoplus [0,\nu]e$ and $Q_{m}=Q\bigcap (X_{m}\bigoplus\text{span}\{e\})$,
where $\bar{B}_{\nu}=\{z\in E | \| z\|\leq \nu\}$. For any $\nu$ with $\nu>\mu>0$, we have
$B_{\nu}(\partial Q_{m})$ and $B_{\mu}(S_{m})$ homologically link.\end{lemma}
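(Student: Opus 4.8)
The plan is to deduce the statement directly from the abstract homological linking criterion of Lemma \ref{2.6}, applied inside the finite–dimensional space $E_m$. The first step is to observe that, although the anisotropic scaling operator $B_\rho$ does \emph{not} respect the orthogonal splitting $E=E^+\oplus E^-\oplus E^0$ (it fails to commute with $J$ and hence mixes the Fourier modes indexed by $k$ and $-k$), it nevertheless maps $E_m$ into $E_m$: on the values $z(t)\in\mathbf R^{2n}$ it acts as a fixed diagonal matrix, so it preserves the truncation $|k|\le m$. Being injective, $B_\rho|_{E_m}$ is therefore a bounded linear invertible operator on $E_m$, with inverse of the same type. Hence I can take, in the notation of Lemma \ref{2.6}, $M=E_m$, $M_1=Y_m=E_m^+$ (which contains $e$ and satisfies $\partial B_\mu\cap M_1=S_m$), $M_2=X_m=E_m^-\oplus E^0$ (so that $(\bar B_\nu\cap M_2)\oplus[0,\nu]e=Q_m$ and $q-1=\dim M_2$), and for the two operators the restrictions $B_\mu|_{E_m}$ and $B_\nu|_{E_m}$; the hypothesis $\nu>\mu>0$ is given.

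It then remains to check the two numerical hypotheses of Lemma \ref{2.6}. For $\nu>\mu\|B_\nu^{-1}B_\mu\|$: the operator $B_\nu^{-1}B_\mu$ acts on $E_m$ as multiplication of the values $z(t)$ by the fixed diagonal matrix $D=\diag\{(\mu/\nu)^{\tilde\tau_1-1},\dots,(\mu/\nu)^{\tilde\tau_n-1},(\mu/\nu)^{\tilde\sigma_1-1},\dots,(\mu/\nu)^{\tilde\sigma_n-1}\}$; since $\nu>\mu>0$ and $\tilde\tau_i\ge1$, $\tilde\sigma_i\ge1$, every entry of $D$ lies in $(0,1]$. A short computation in the Fourier expansion $z(t)=\sum_k\exp(\frac{2k\pi t}{\tau}J)a_k$ — pairing the $\pm k$ modes and using that $J$ is orthogonal, so the mode coupling produced by $D$ does not enlarge the $E$-norm — gives $\|B_\nu^{-1}B_\mu\|\le\max_i\max\{(\mu/\nu)^{\tilde\tau_i-1},(\mu/\nu)^{\tilde\sigma_i-1}\}\le1$, whence $\mu\|B_\nu^{-1}B_\mu\|\le\mu<\nu$.

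For the invertibility of $PB_\mu^{-1}B_\nu:M_2\to M_2$ (with $P:E_m\to M_2$ the orthogonal projection): symmetrically, $B_\mu^{-1}B_\nu$ is multiplication of the values $z(t)$ by the positive diagonal matrix $\tilde D=\diag\{(\nu/\mu)^{\tilde\tau_i-1},(\nu/\mu)^{\tilde\sigma_i-1}\}$, all of whose eigenvalues are $\ge1$ for the same reason. The same Fourier computation, now retaining the least eigenvalue, yields $\langle B_\mu^{-1}B_\nu z,z\rangle\ge\|z\|^2$ for every $z\in E_m$, and $B_\mu^{-1}B_\nu$ is self-adjoint on $E_m$. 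Restricting the quadratic form to $M_2$ we get $\langle PB_\mu^{-1}B_\nu z,z\rangle=\langle B_\mu^{-1}B_\nu z,z\rangle\ge\|z\|^2$ for $z\in M_2$, so $PB_\mu^{-1}B_\nu|_{M_2}$ is self-adjoint and positive definite on the finite-dimensional space $M_2$, hence invertible. With both hypotheses verified, Lemma \ref{2.6} gives that $B_\nu(\partial Q_m)$ and $B_\mu(S_m)$ homologically link, which is exactly the claim.

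The only genuinely non-routine point — and the content of the two "short computations" above — is the interaction of the anisotropic scaling $B_\rho$ with the inner product on $E$: unlike an isotropic dilation, $B_\rho$ does not commute with $J$, so it mixes $E^+$ and $E^-$ and one cannot read off its norm or its positivity mode by mode. The remedy is to expand in the Fourier basis, group the $k$-th and $(-k)$-th modes, and exploit the orthogonality of $J$ to bound (respectively bound from below) the resulting quadratic form by the spectral norm of $D$ (respectively the least eigenvalue of $\tilde D$) times $\|z\|^2$; everything else is just matching $M_1$, $M_2$, $e$, $Q_m$, $S_m$ to the template of Lemma \ref{2.6}.
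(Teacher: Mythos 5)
Your proof is correct and follows the same route as the paper: match $M=E_m$, $M_1=Y_m$, $M_2=X_m$, $B_\mu|_{E_m}$, $B_\nu|_{E_m}$ to the template of Lemma \ref{2.6}, observe that each $B_\rho$ is a pointwise diagonal matrix action so $B_\rho(E_m)\subset E_m$, and verify the two numerical hypotheses. The one place you go further than the paper is the invertibility of $PB_\mu^{-1}B_\nu|_{X_m}$: the paper simply cites references for the corresponding statement on $E^-\oplus E^0$ and then restricts to $E_m$, whereas you prove it directly by showing $B_\mu^{-1}B_\nu=B_{\nu/\mu}$ is self-adjoint for $\langle\cdot,\cdot\rangle$ and that its compression to $X_m$ is positive definite. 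The Fourier-mode computation you invoke does work out as you say: pairing the $k$ and $-k$ coefficients, $B_\rho$ acts on each pair $(a_k,a_{-k})$ by the symmetric block matrix $\left(\begin{smallmatrix}S&A\\A&S\end{smallmatrix}\right)$ with $S=\tfrac12(D_\rho+D_\rho^*)$, $A=\tfrac12(D_\rho-D_\rho^*)$, $D_\rho^*=-JD_\rho J$, and this block is orthogonally conjugate to $\mathrm{diag}(D_\rho,D_\rho^*)$; hence for $\rho=\nu/\mu>1$ all eigenvalues are $\ge1$ (giving your lower bound) and for $\rho=\mu/\nu<1$ all are $\le1$ (giving your norm bound), with the $k=0$ mode handled trivially. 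This self-contained positivity argument is a small but genuine improvement over the paper's citation.
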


\begin{proof} Since $\nu>\mu>0$, then $\nu>\mu\| B_{\nu}^{-1} B_{\mu}\|=\mu\|B_{\frac{\mu}{\nu}}\|$ and
$PB^{-1}_{\mu}B_{\nu}: E^{-}\bigoplus E^{0}\rightarrow E^{-}\bigoplus E^{0}$ is linear bounded and invertible (see \cite{813, 110}),
where $P: E\rightarrow E^{-}\bigoplus E^{0}$ denotes the orthogonal projection.
Furthermore, by noting that $B_{i}(E_{m})\subset E_{m}$ $(i=\mu, \nu)$, and
$B_{i}|_{E_{m}}: E_{m}\rightarrow E_{m}$ is linear  bounded and invertible,
then $\tilde{P}_{m}(B_{\mu}|_{E_{m}})^{-1}B_{\nu}|_{E_{m}}: X_{m}\rightarrow X_{m}$
is linear  bounded and invertible, where
$\tilde{P}_{m}: E_{m}\rightarrow X_{m}$ is the orthogonal projection. From Lemma \ref{2.6}, we complete the proof.
\end{proof}

For $\varepsilon_{1}>0$ as in Lemma \ref{3.130}, we
set $\frac{2\pi}{\tau}\cdot\sqrt{2n}A^{-1}_{1}=\varepsilon_{1}\min_{1\leq i\leq n}
\left\{\left(\frac{\varepsilon_{1}}{\sqrt{2n}}\right)^{1+\frac{\sigma_{i}}{\tau_{i}}},
\left(\frac{\varepsilon_{1}}{\sqrt{2n}}\right)^{1+\frac{\tau_{i}}{\sigma_{i}}}\right\}$,
by (H5), there exists a constant $A_{2}>0$ such that
\begin{equation}\label{3.14}
H(t,z)\geq A_{1}\sum_{i=1}^{n}
 \left(|p_{i}|^{1+\frac{\sigma_{i}}{\tau_{i}}}+|q_{i}|^{1+\frac{\tau_{i}}{\sigma_{i}}}\right),
 \ \ \ \ (t,z)\in \mathbf{R}\times \mathbf{R}^{2n}\ \text{with}\ |z|\geq A_{2}.
\end{equation}

\begin{lemma}\label{3.170}
Choose $\nu>\frac{A_{2}}{\varepsilon_{1}}+1$, then
$f_{m}|_{B_{\nu}(\partial Q_{m})}\leq 0$.
\end{lemma}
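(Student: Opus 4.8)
The plan is to test the inequality $f_m(B_\nu w)\le 0$ face by face on $\partial Q_m$, the decisive mechanism being that the homogeneity of $B_\nu$ inflates the ``kinetic'' part of $f_m$ to size $\nu^\eta$, which is then swamped by the potential part on a set of definite measure via the growth hypothesis in the form (\ref{3.14}). Recall that $\partial Q_m$ is the union of the bottom face $\{v:v\in X_m,\ \|v\|\le\nu\}$ (where the $[0,\nu]e$--coordinate $t$ equals $0$), the top face $\{v+\nu e:v\in X_m,\ \|v\|\le\nu\}$, and the lateral face $\{v+te:v\in X_m,\ \|v\|=\nu,\ 0\le t\le\nu\}$. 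For a generic point $w=v+te$ of $\partial Q_m$, with $v=v^-+v^0\in E_m^-\oplus E^0$ and $w^+=te$, one has $B_\nu w\in E_m$, so $f_m(B_\nu w)=f(B_\nu w)$, and (\ref{3.1}) gives
\[
f_m(B_\nu w)=\frac{\pi}{\tau}\,\nu^{\eta-2}\bigl(t^2-\|v^-\|^2\bigr)-\int_0^\tau H\bigl(s,B_\nu w(s)\bigr)\,\mathrm{d}s .
\]
Since $H\ge 0$ by (H1), the claim is immediate whenever $t^2\le\|v^-\|^2$; in particular it holds on the whole bottom face. So only the region $\{t^2>\|v^-\|^2\}$, a subset of the top and lateral faces, requires an argument.

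On that region I would first check the ``balance'' inequality $\|w^-\|=\|v^-\|\le\|te+v^0\|=\|w^++w^0\|$: on the top face $\|v^-\|\le\|v\|\le\nu=t\le\|te+v^0\|$, while on the lateral face $t^2>\|v^-\|^2$ gives $\|v^-\|<t\le\|te+v^0\|$ directly. Since moreover $\|w\|^2=\|v\|^2+t^2\ge\nu^2$ on both faces, the normalized point $\hat w:=w/\|w\|$ has norm $1\in[1,2]$ and satisfies the balance inequality, hence $\hat w\in W$ in the sense of Lemma \ref{3.130}. That lemma produces a measurable set $T\subset[0,\tau]$ of measure at least $\varepsilon_1$ on which $|\hat w(s)|\ge\varepsilon_1$, so $|w(s)|\ge\|w\|\varepsilon_1\ge\nu\varepsilon_1$ there. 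Because $\tilde\tau_i,\tilde\sigma_i\ge 1$ and $\nu>1$, the operator $B_\nu$ can only enlarge each coordinate, so $|B_\nu w(s)|\ge|w(s)|\ge\nu\varepsilon_1>A_2$ on $T$ (using $\nu>\tfrac{A_2}{\varepsilon_1}+1$); this is precisely the range in which (\ref{3.14}) is available.

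To finish I would substitute into (\ref{3.14}). For $s\in T$ some coordinate of $w(s)$ has modulus at least $\nu\varepsilon_1/\sqrt{2n}$; say it is the $p_{i_0}$--coordinate (the $q$--case is identical). The corresponding coordinate of $B_\nu w(s)$ is $\nu^{\tilde\tau_{i_0}-1}$ times it, and the elementary identity $(\tilde\tau_i-1)\bigl(1+\tfrac{\sigma_i}{\tau_i}\bigr)=\eta-1-\tfrac{\sigma_i}{\tau_i}$ (with the $\sigma\leftrightarrow\tau$ analogue) then yields, from (\ref{3.14}),
\[
H\bigl(s,B_\nu w(s)\bigr)\ \ge\ A_1\,\nu^{\eta}\,\Bigl(\tfrac{\varepsilon_1}{\sqrt{2n}}\Bigr)^{1+\sigma_{i_0}/\tau_{i_0}}\ \ge\ A_1\,\nu^{\eta}\min_{1\le i\le n}\Bigl\{\bigl(\tfrac{\varepsilon_1}{\sqrt{2n}}\bigr)^{1+\sigma_i/\tau_i},\ \bigl(\tfrac{\varepsilon_1}{\sqrt{2n}}\bigr)^{1+\tau_i/\sigma_i}\Bigr\}.
\]
Integrating over $T$ and using the definition of the constant $A_1$ gives $\int_0^\tau H(s,B_\nu w)\,\mathrm{d}s\ge\varepsilon_1A_1\nu^\eta\min_i\{\cdots\}=\tfrac{2\pi}{\tau}\sqrt{2n}\,\nu^\eta\ge\tfrac{\pi}{\tau}\nu^\eta\ge\tfrac{\pi}{\tau}\nu^{\eta-2}(t^2-\|v^-\|^2)$, so $f_m(B_\nu w)\le 0$, as required. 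The only genuinely delicate step is this last one: aligning the powers of $\nu$ manufactured by $B_\nu$ with the exponents buried in the definition of $A_1$, together with the bookkeeping needed to be sure that $\hat w$ really lies in $W$ (which is why the case split on the sign of $t^2-\|v^-\|^2$ has to be made first). Everything else is routine.
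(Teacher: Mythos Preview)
Your proof is correct and follows essentially the same strategy as the paper's: reduce to $f$ on $B_\nu(\partial Q)$, dispose of the case where the quadratic part is already nonpositive, and on the remaining region invoke Lemma~\ref{3.130} to find a set of measure $\ge\varepsilon_1$ where $|w|\ge\nu\varepsilon_1$, then use (\ref{3.14}) together with the homogeneity of $B_\nu$ and the calibration of $A_1$ to make the potential dominate. Your case split (on the sign of $t^2-\|v^-\|^2$) is a mild reorganization of the paper's (which first separates $s=0$ from $s\neq 0$ and then compares $\|se+z^0\|$ with $\|z^-\|$), and your normalization $\hat w=w/\|w\|$ is equivalent to the paper's implicit use of $z/\nu\in W$ since $\nu\le\|z\|\le 2\nu$ in its Case~2.

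The one genuine difference is in the pointwise lower bound for $H(s,B_\nu w(s))$: the paper sums over all coordinates and appeals to an auxiliary inequality (its (\ref{3.140}), citing Remark~1.4 of \cite{110}) to pass from $\sum_i |\cdot|^{1+\sigma_i/\tau_i}$ back to $|z(t)|$, whereas you select by pigeonhole a single coordinate of size $\ge\nu\varepsilon_1/\sqrt{2n}$ and estimate only that term. Your route is slightly more elementary and avoids the external reference; the paper's route gives a marginally sharper constant (it ends up with $\int H\ge\frac{2\pi}{\tau}\nu^\eta$ rather than your $\frac{2\pi}{\tau}\sqrt{2n}\,\nu^\eta$, though both suffice). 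Either way the mechanism and the conclusion are the same.
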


\begin{proof} Since $\partial Q_{m}\subset \partial Q$, we show $f|_{B_{\nu}(\partial Q)}\leq 0$.

For $z\in\partial Q$, $z=se+z^{-}+z^{0}$, then $f(B_{\nu}z)\leq 0$. We show this in two cases.

Case 1. If $s=0$, by (H1) and (\ref{3.1}), we have $f(B_{\nu}z)\leq 0$.

Case 2. If $s\neq 0$, then $s=\nu$ and $\| z^{-}+z^{0}\|\leq \nu$
or $0\leq s\leq \nu$ and $\| z^{-}+z^{0}\|= \nu$.
In the two situations, we always have $\nu\leq\| z\|\leq 2\nu$.
We now consider two subcases.

Subcase 1. If $\| se+z^{0}\|<\| z^{-}\|$, so $\|se\|<\|z^-\|$,
by (H1) and (\ref{3.1}), then $f(B_{\nu}z)\leq 0$.

Subcase 2. If $\| se+z^{0}\|\geq\| z^{-}\|$,
set $\Omega_{z}=\{t\in [0,\tau]| |z(t)|\geq \nu\varepsilon_{1}\}$,
Lemma \ref{3.130} shows that measure $\Omega_{z}\geq\varepsilon_{1}$.
From definition, we have  \begin{equation}\frac{\sqrt{2n}}{\nu\varepsilon_{1}}|z(t)|\geq \sqrt{2n},\quad t\in \Omega_{z} \label{aaa} \end{equation}
  and
\be\label{bbb}|B_{\nu}z(t)|\geq |z(t)|\geq \nu\varepsilon_{1}>A_{2},\quad  t\in \Omega_{z}.\ee

From \eqref{aaa} and  Remark 1.4 of \cite{110}, there hold
\begin{eqnarray}\label{3.140}
&&\sum_{i=1}^{n}\left(\left|\frac{\sqrt{2n}}{\nu\varepsilon_{1}}p_{i}(t)\right|^{1+\frac{\sigma_{i}}{\tau_{i}}}
+\left|\frac{\sqrt{2n}}{\nu\varepsilon_{1}}q_{i}(t)\right|^{1+\frac{\tau_{i}}{\sigma_{i}}}\right)\nonumber\\
&\geq&\frac{1}{2n}\sum_{i=1}^{n}\left(\frac{\sqrt{2n}}{\nu\varepsilon_{1}}|p_{i}(t)|+
\frac{\sqrt{2n}}{\nu\varepsilon_{1}}|q_{i}(t)|\right)
\geq\frac{1}{2n}\frac{\sqrt{2n}}{\nu\varepsilon_{1}}|z(t)|,\quad t\in \Omega_{z} .
\end{eqnarray}

By (\ref{3.14}), \eqref{bbb} and (\ref{3.140}), we have
\begin{eqnarray}\label{3.150}
H(t,B_{\nu}z(t))&\geq& A_{1}\sum_{i=1}^{n}
 \left(|\nu^{\tilde\tau_{i}-1}p_{i}(t)|^{1+\frac{\sigma_{i}}{\tau_{i}}}
+|\nu^{\tilde\sigma_{i}-1}q_{i}(t)|^{1+\frac{\tau_{i}}{\sigma_{i}}}\right)\nonumber\\
&\geq&A_{1}\nu^{\eta}\sum_{i=1}^{n}
 \left(|\nu^{-1}p_{i}(t)|^{1+\frac{\sigma_{i}}{\tau_{i}}}+|\nu^{-1}q_{i}(t)|^{1+\frac{\tau_{i}}{\sigma_{i}}}\right)\nonumber\\
&\geq&A_{1}\nu^{\eta}\min_{1\leq i\leq n}\left\{\left(\frac{\varepsilon_{1}}{\sqrt{2n}}\right)^{1+\frac{\sigma_{i}}{\tau_{i}}},
\left(\frac{\varepsilon_{1}}{\sqrt{2n}}\right)^{1+\frac{\tau_{i}}{\sigma_{i}}}\right\}\cdot\nonumber\\
&&\sum_{i=1}^{n}\left(\left|\frac{\sqrt{2n}}{\nu\varepsilon_{1}}p_{i}(t)\right|^{1+\frac{\sigma_{i}}{\tau_{i}}}
+\left|\frac{\sqrt{2n}}{\nu\varepsilon_{1}}q_{i}(t)\right|^{1+\frac{\tau_{i}}{\sigma_{i}}}\right)\nonumber\\
&\geq&A_{1}\nu^{\eta}\min_{1\leq i\leq n}\left\{\left(\frac{\varepsilon_{1}}{\sqrt{2n}}\right)^{1+\frac{\sigma_{i}}{\tau_{i}}},
\left(\frac{\varepsilon_{1}}{\sqrt{2n}}\right)^{1+\frac{\tau_{i}}{\sigma_{i}}}\right\}
\frac{1}{2n}\frac{\sqrt{2n}}{\nu\varepsilon_{1}}|z(t)|\nonumber\\
&\geq&\frac{2\pi}{\tau\varepsilon_1}\nu^{\eta},\quad t\in \Omega_{z}.
\end{eqnarray}

By (H1), (\ref{3.1}) and (\ref{3.150}), we have
$f(B_{\nu}z)\leq \frac{2\pi}{\tau}\nu^{\eta}-\int_{\Omega_{z}}H(t,B_{\nu}z(t))\text{d}t\leq 0.$
\end{proof}

\begin{theorem}\label{3.160}
If $H$ satisfies (H1)-(H5), then there exists
a nonconstant solution $z$ of the system (\ref{1.1}) satisfying
\begin{equation}\label{3.161}
i_{\tau}(z)\leq n+1\leq i_{\tau}(z)+\nu_{\tau}(z).
\end{equation}
\end{theorem}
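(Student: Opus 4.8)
The plan is to apply the homological linking theorem, Lemma \ref{2.1}, to the restricted functionals $f_m = f|_{E_m}$ on the finite-dimensional spaces $E_m$, using the geometric data prepared in Lemmas \ref{3.11}--\ref{3.170}, and then pass to the limit $m\to\infty$ recovering index information on the limiting critical point. First I would fix $\mu\in(0,1)$ and $\delta>0$ as in Lemma \ref{3.12} and $\nu>\max\{\mu, A_2/\varepsilon_1 + 1\}$ as in Lemma \ref{3.170}, and take $q = \dim(X_m\oplus\mathrm{span}\{e\}) = \dim(E_m^-\oplus E^0) + 1$ on each level $m$. By Lemma \ref{l3.170} the sets $B_\nu(\partial Q_m)$ and $B_\mu(S_m)$ homologically link in $E_m$; by Lemmas \ref{3.170} and \ref{3.12} we have $\sup_{B_\nu(\partial Q_m)} f_m \le 0 < \delta \le \inf_{B_\mu(S_m)} f_m$, which is hypothesis (i). Since $E_m$ is finite-dimensional and, by Lemma \ref{3.11} (and the remark that $f$ satisfying (C)$^*$ forces each $f_m$ to satisfy (C)), $f_m$ satisfies the Cerami condition, Remark \ref{2.01} lets us invoke Lemma \ref{2.1}: we obtain a critical point $z_m\in E_m$ of $f_m$ with critical value $c_m\in[\delta, \sup_{B_\nu(Q_m)} f_m]$ and, crucially, with Morse indices satisfying
\begin{equation*}
m(z_m)\le q \le m^*(z_m) = m(z_m) + \dim\ker D^2 f_m(z_m).
\end{equation*}

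Next I would translate the Morse-index bound into a Maslov-type index bound. Using Lemma \ref{2.2} (with $B(t) = H''_{zz}(t,z_m(t))$ and $d$ chosen small), for $m$ large the negative/null eigenspace dimensions of $P_m(A-B)P_m$ are $\tfrac12\dim(P_m E) + i_\tau(B)$ and $\nu_\tau(B)$ respectively; since $m(z_m)$ and $m^*(z_m)$ are exactly these quantities computed at $z_m$, and $\tfrac12\dim(P_m E) = \dim(E_m^-) + n = \dim(E_m^- \oplus E^0) - n = q - 1 - n$, the inequality $m(z_m)\le q\le m^*(z_m)$ becomes
\begin{equation*}
(q-1-n) + i_\tau(z_m) \le q \le (q-1-n) + i_\tau(z_m) + \nu_\tau(z_m),
\end{equation*}
i.e. $i_\tau(z_m)\le n+1\le i_\tau(z_m)+\nu_\tau(z_m)$. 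The remaining task is the passage to the limit: one needs a uniform bound on $\|z_m\|$ (the upper bound $\sup_{B_\nu(Q_m)}f_m$ on $c_m$ is uniform in $m$ since $B_\nu(Q_m)\subset B_\nu(Q)$ and $f$ is bounded above on the bounded set $B_\nu(Q)$ by (H1)), then a standard argument — using the (C)$^*$-type compactness of Lemma \ref{3.11} together with $\|\nabla f_m(z_m)\|=0$ — to extract a subsequence $z_m\to z$ strongly in $E$, with $z$ a critical point of $f$, hence a $\tau$-periodic solution of (\ref{1.1}). Lower semicontinuity of $i_\tau$ under this convergence (more precisely, the locally constant/jump behavior of the Maslov-type index: $i_\tau(z_m)\to$ something with $i_\tau(z_m)\in[i_\tau(z), i_\tau(z)+\nu_\tau(z)]$ and similarly for $i_\tau+\nu_\tau$ for $m$ large) then yields (\ref{3.161}) for the limit $z$. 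Finally, $z$ is nonconstant because $f(z)=\lim c_m\ge\delta>0$ while constant solutions sit at critical value $-\int_0^\tau H(t,\cdot)\,dt\le 0$ by (H1).

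The main obstacle I anticipate is the limiting step — specifically, ensuring that the Morse-index information survives the passage $m\to\infty$ in the correct form. One has to be careful that the critical point $z_m$ produced by Lemma \ref{2.1} on level $m$ has $\nabla f(z_m)$ (not just $\nabla f_m(z_m)$) small, or else work directly with the minimax value $c = \inf_\xi\sup_{|\xi|} f$ on the whole space and show it is attained with the right index; the standard device here (as in \cite{110, 109}) is that $c_m$ is sandwiched in a fixed interval $[\delta, C]$, so one applies the (C)$^*$ condition of Lemma \ref{3.11} to the sequence $\{z_m\}$ to get strong convergence, and then uses continuity properties of the Maslov-type index (the index $i_\tau$ is "upper semicontinuous" and $i_\tau+\nu_\tau$ is "lower semicontinuous" along such sequences) to transfer $i_\tau(z_m)\le n+1\le i_\tau(z_m)+\nu_\tau(z_m)$ to the limit. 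A secondary technical point is checking that Lemma \ref{2.2} applies uniformly — i.e. that the threshold $d$ can be chosen independent of the relevant $B$ — but this is routine given the a priori bound on $\|z_m\|_{C^1}$ coming from (H3) and elliptic-type estimates for the ODE.
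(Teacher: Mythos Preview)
Your overall architecture is correct, and the dimension arithmetic is right, but you invert the order of the key index computation and this creates a genuine gap. You attempt to derive $i_\tau(z_m)\le n+1\le i_\tau(z_m)+\nu_\tau(z_m)$ \emph{at each finite level} $m$ by applying Lemma~\ref{2.2} with $B(t)=H''_{zz}(t,z_m(t))$, and then pass this inequality to the limit via semicontinuity of the Maslov index. The problem is that Lemma~\ref{2.2} reads: \emph{given} $B$, for $d<\tfrac14\|(A-B)^\sharp\|^{-1}$ and then for $m$ large enough (depending on $B$ and $d$), the dimension formulas hold. With $B=B_m$ varying with $m$, the required threshold on $m$ depends on $m$ itself. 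You flag this but dismiss it as ``routine given the a priori bound on $\|z_m\|_{C^1}$''; it is not. A $C^1$ bound on $z_m$ controls $\|B_m\|$ but \emph{not} $\|(A-B_m)^\sharp\|$: if $\nu_\tau(z)>0$, the operators $A-B_m$ can acquire arbitrarily small nonzero eigenvalues as $B_m\to B$, forcing $\|(A-B_m)^\sharp\|\to\infty$ and any admissible $d_m\to 0$. (Your claim that $m(z_m)$ and $m^*(z_m)$ are ``exactly'' the quantities in Lemma~\ref{2.2} is also an overstatement --- one only gets the inequalities $m(z_m)\ge\dim M_d^-$ and $m^*(z_m)\le\dim M_d^-+\dim M_d^0$ --- though, as it happens, those inequalities still point the right way.)

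The paper avoids both the circularity and the need for any limiting argument on the Maslov index by \emph{reversing the order}: first use (C)$^*$ to pass to a limit $z$ with $\nabla f(z)=0$; then set $B(t)=H''_{zz}(t,z(t))$ for this \emph{fixed} $z$ and choose $d$ once and for all. Because $f\in C^2$ and $z_m\to z$, there is a ball around $z$ on which $\|f''(x)-(A-B)\|<d/3$, hence for $m$ large $\|f''_m(z_m)-P_m(A-B)P_m\|<d/2$ (equations (\ref{3.7})--(\ref{3.8})). This yields the one-sided comparisons $\dim M^-(f''_m(z_m))\ge\dim M_d^-(P_m(A-B)P_m)$ and the analogue for $M_d^+$ (equations (\ref{3.9})--(\ref{3.10})). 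Now Lemma~\ref{2.2}, applied once to the fixed $B$, converts the Morse bound $m(z_m)\le\dim X_m+1\le m^*(z_m)$ directly into (\ref{3.161}) for $z$, with no intermediate $i_\tau(z_m)$ and no semicontinuity step needed.
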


\begin{proof}
We follow the ideas of   \cite{102}. Now
Lemmas \ref{3.11}-\ref{3.170} show that all conditions of Lemma \ref{2.1} are satisfied  for $f_{m}$ (see Remark \ref{2.01}).
By  (\ref{3.1}) and (H1), we have $f|_{B_{\nu}(Q)}\leq \frac{2\pi}{\tau}\nu^{\eta}$. So $f_{m}$ has a critical point $z_{m}$ satisfying
\begin{equation}\label{3.162}
\delta\leq f_{m}(z_{m})\leq \frac{2\pi}{\tau}\nu^{\eta}\ \ \text{and}\ \
m(z_{m})\leq \ \text{dim} X_{m}+1\leq m^{*}(z_{m}).
\end{equation}

By Lemma \ref{3.11}, we may assume $z_{m}\rightarrow z\in E$
with $\delta\leq f(z)\leq \frac{2\pi}{\tau}\nu^{\eta}$ and $\nabla f(z)=0$.
By (H1), we see $z$ is a nonconstant solution of the system (\ref{1.1}).
Now we show that the critical point $z$ satisfies (\ref{3.161}).

Let $B$ be the operator for $B(t)=H_{zz}''(t,z(t))$ defined in Section 2, then we have
\begin{equation}\label{3.7}
\| f''(x)-f''(z)\|=\| f''(x)-(A-B)\|\rightarrow 0,\ \ \ \ \| x-z\|\rightarrow 0.
\end{equation}

Let $0<d<\| (A-B)^{\sharp}\|^{-1}$, by (\ref{3.7}), there exists a constant $\kappa>0$ such that
$$\| f''(x)-(A-B)\|<\frac{d}{3},\ \ \ \ x\in \bar{B}(z,\kappa):=\{z\in E| \| x-z\|\leq \kappa\}.$$

Then for $m$ large enough, we have
\begin{equation}\label{3.8}
\| f_{m}''(x)-P_{m}(A-B)P_{m}\|<\frac{d}{2},\ \ \ \ x\in \bar{B}(z,\kappa)\bigcap E_{m}.
\end{equation}

For $x\in \bar{B}(z,\kappa)\bigcap E_{m}$, Eq. (\ref{3.8}) implies that
\begin{eqnarray*}
\langle f_{m}''(x)u,u\rangle&\leq&\langle P_{m}(A-B)P_{m}u,u\rangle
+\| f_{m}''(x)-P_{m}(A-B)P_{m}\|\| u\|^{2}\\
&\leq& -\frac{d}{2}\| u\|^{2}<0,\ \ \ \ u\in M_{d}^{-}(P_{m}(A-B)P_{m})\backslash \{0\}.
\end{eqnarray*}

Thus,
\begin{equation}\label{3.9}
\text{dim}M^{-}(f_{m}''(x))\geq \text{dim}M_{d}^{-}(P_{m}(A-B)P_{m}),\ \ \ \ x\in \bar{B}(z,\kappa)\bigcap E_{m}.
\end{equation}

Similarly, we have
\begin{equation}\label{3.10}
\text{dim}M^{+}(f_{m}''(x))\geq \text{dim}M_{d}^{+}(P_{m}(A-B)P_{m}),\ \ \ \ x\in \bar{B}(z,\kappa)\bigcap E_{m}.
\end{equation}

By (\ref{3.162}), (\ref{3.9}) and (\ref{3.10}), for $m$ large enough, Lemma \ref{2.2} shows that
\begin{eqnarray*}
\frac{1}{2}\text{dim}E_{m}+n+1&=&\text{dim}X_{m}+1\geq m(z_{m})\\
&\geq&\text{dim}M_{d}^{-}(P_{m}(A-B)P_{m})\\
&=&\frac{1}{2}\text{dim}E_{m}+i_{\tau}(z)
\end{eqnarray*}
and
\begin{eqnarray*}
\frac{1}{2}\text{dim}E_{m}+n+1&=&\text{dim}X_{m}+1\leq m^{*}(z_{m})\\
&\leq&\text{dim}M_{d}^{-}(P_{m}(A-B)P_{m})+M_{d}^{0}(P_{m}(A-B)P_{m})\\
&=&\frac{1}{2}\text{dim}E_{m}+i_{\tau}(z)+\upsilon_{T}(z).
\end{eqnarray*}

The above two estimates show that (\ref{3.161}) holds.
\end{proof}

\begin{remark}\label{7.1}
Under either the conditions of Theorem \ref{1.20} or Theorem \ref{1.00},
the conclusion of Theorem \ref{3.160} still holds (see Remarks below).
\end{remark}

\textbf{Proof of {Corollary \ref{1.000}}.}
The proof is the same as that in \cite{102}. For readers convenience' we give the details here.

Since $H$ is $k\tau$-periodic, by Theorem \ref{3.160},
the system \ref{1.1} possesses a nonconstant $k\tau$-periodic solution $z_{k}$ satisfying
\begin{equation}\label{3.169}
i_{k\tau}(z_{k})\leq n+1\leq i_{k\tau}(z_{k})+\nu_{k\tau}(z_{k}).
\end{equation}

If $z_{k}$ and $z_{pk}$ are not geometrically distinct, by definition, there exist integers
$l$ and $m$ such that $l\ast z_{k}=m\ast z_{pk}$. By Proposition \ref{2.11}, we have
$i_{k\tau}(l\ast z_{k})=i_{k\tau}(z_{k})$, $\nu_{k\tau}(l\ast z_{k})=\nu_{k\tau}(z_{k})$ and
$i_{pkT}(m\ast z_{pk})=i_{pkT}(z_{pk})$, $\nu_{pkT}(m\ast z_{pk})=\nu_{pkT}(z_{pk})$.

Eq. (\ref{3.169}) shows that $i_{pkT}(z_{pk})\leq n+1$ and $i_{k\tau}(z_{k})+\nu_{k\tau}(z_{k})\geq n+1$.
Proposition \ref{2.10} shows that $p-n\leq n+1$ contradicting with the assumption $p>2n+1$.
Hence if $p>2n+1$, then
$z_{k}$ and $z_{pk}$ are geometrically distinct.

If all $z_{k}$ are non-degenerate, then $\nu_{k\tau}(z_{k})=0$ and $i_{k\tau}(z_{k})= n+1$ for $k\in \mathbf N$.
Proposition \ref{2.12} shows that $p+n\leq n+1$, so we get $p=1$.
Hence
$z_{k}$ and $z_{pk}$ are geometrically distinct when $p>1$. We complete the proof of Theorem \ref{1.20}. \hfill$\Box$

From Remark \ref{7.1}, we see the proofs of Theorem \ref{1.20}
and Theorem \ref{1.00} are similar to the proof of Corollary \ref{1.000}.

\begin{remark}
Under the conditions of Theorem \ref{1.20},
the conclusion of Theorem \ref{3.160} still holds.
\end{remark}

Indeed, for any $K>0$, we take a cut-off function
defined by
\[
   \chi(s)=\left\{
   \begin{array}{cc}
   \begin{aligned}
   1,\ \ \ &\mbox{$0\leq s\leq K$,}\\
   0,\ \ \ &\mbox{$s\geq K+1$,}
   \end{aligned}
  \ \ \ \  \mbox{and}\ \  \chi'|_{(K,K+1)}<0.
   \end{array}
   \right.
\]
We set $\gamma=\max_{1\leq i\leq n} \{\frac{\alpha_{i}}{\beta_{i}}, \frac{\beta_{i}}{\alpha_{i}},
\frac{\xi_{i}}{\eta_{i}}, \frac{\eta_{i}}{\xi_{i}},
\frac{{\sigma}_{i}}{{\tau}_{i}}, \frac{{\tau}_{i}}{{\sigma}_{i}}, \beta-1\}$.  Choosing $\lambda_{0}\in  (\gamma, 1+\beta)$
 and
$$C_{K}\geq \max \bigg\{
\max\limits_{^{\ \ \ \ t\in \textbf{R}}_{K\leq |z|\leq K+1}}{\frac{H(t,z)}{|z|^{\lambda_{0}+1}}},
\ \ \frac{c_{1}}{\min_{1\leq i\leq n}\{\alpha_{i}\lambda_{0}-\beta_{i},
\beta_{i}\lambda_{0}-\alpha_{i}\}},\ \
A_{1}\bigg\},$$
where $A_{1}$ is defined in (\ref{3.14}), $c_1,\;\alpha_i,\;\beta_i$ are defined in (H2). For $(t,z)\in \mathbf{R}\times \mathbf{R}^{2n}$, we set
$$H_{K}(t,z)=\chi(|z|)H(t,z)+(1-\chi(|z|))C_{K}|z|^{\lambda_{0}+1}.$$

If $K>0$ is large enough,
it is easy to show that $H_{K}$ satisfies (H2) and (H3)$'$ with the constants independent of $K$ (see \cite{109}).
The modified function $H_{K}$ also satisfies (H1), (H3)-(H5).

Let $f_{K}(z)=\frac{1}{2}\langle Az,z\rangle-\int^{\tau}_{0}H_{K}(t,z)\text{d}t$,
then $f_{K}\in C^{2}(E,\textbf{R})$.

By the choice of $\lambda_{0}$, there exists a constant $A_{2}>0$ such that
$$|z|^{\lambda_{0}+1}\geq\sum_{i=1}^{n}
 \left(|p_{i}|^{1+\frac{\sigma_{i}}{\tau_{i}}}+|q_{i}|^{1+\frac{\tau_{i}}{\sigma_{i}}}\right),\ \ \ |z|\geq A_{2},$$
then we have
\begin{equation*}
H_{K}(t,z)\geq A_{1}\sum_{i=1}^{n}
 \left(|p_{i}|^{1+\frac{\sigma_{i}}{\tau_{i}}}+|q_{i}|^{1+\frac{\tau_{i}}{\sigma_{i}}}\right),
 \ \ \ \ (t,z)\in \mathbf{R}\times \mathbf{R}^{2n}\ \text{with}\ |z|\geq A_{2}.
\end{equation*}

In all the arguments before, we replace $H$, $\lambda$ and $f$  by $H_{K}$, $\lambda_{0}$ and $f_{K}$ respectively,
we see $f_{K}$ possesses a critical point $z_{K}$ satisfying $0<\delta_K<f_{K}(z_{K})\leq \frac{2\pi}{\tau}\nu^{\eta}$ and
$i_{\tau}(z_{K})\leq n+1\leq i_{\tau}(z_{K})+\nu_{\tau}(z_{K}).$

By (H3)$'$, it is easy to prove that $z=z_{K}$ is independent of $K$ and a $\tau$-periodic nonconstant
solution of the system (\ref{1.1}) for $K$  large enough (see \cite{109}).

\begin{remark}
Under the conditions of Theorem \ref{1.00},
the conclusion of Theorem \ref{3.160} still holds.
\end{remark}

In fact, we also take the cut-off function $\chi\in C^{\infty}([0,+\infty), \mathbf{R})$   as before.

For $R$ defined in (C3), we set
 $R(K)\geq\max\limits_{^{\ \ \ \ t\in \textbf{R}}_{K\leq |z|\leq K+1}}
\frac{H(t,z)}{\sum_{i=1}^{n}(|p_{i}|^
{\varphi_{i}}+|q_{i}|^{\psi_{i}})}$ $(K>R)$ and define
$$H_{K}(t,z)=\chi(|z|)H(t,z)+(1-\chi(|z|))R(K)
\sum_{i=1}^{n}\left(|p_{i}|^{\theta^{-1}\varphi_{i}}+|q_{i}|^{\theta^{-1}\psi_{i}}\right),\;\;(t,z)\in {\mathbf R}\times {\mathbf R}^{2n}.$$
Then $H_{K}$ satisfies (C2) and (C3) with the constants independent of $K$ (see \cite{813})
if $R(K)$ and $R$ are large enough.

Let $f_{K}(z)=\frac{1}{2}\langle Az,z\rangle-\int^{\tau}_{0}H_{K}(t,z)\text{d}t$,
then $f_{K}\in C^{2}(E,\textbf{R})$.
It is easy to show that $f_{K}$ satisfies (PS)$^{*}$ condition (see \cite{813}). By the definition of $H_{K}$,
we can choose $\lambda_1>\max_{1\leq i\leq n}\{\frac{\varphi_{i}}{\psi_{i}}, \frac{\psi_{i}}{\varphi_{i}}\}$ such that $H_{K}$
satisfies (H4), then $f_{K}|_{B_{\mu(K)}(S)}\geq\delta_{K}>0$ (see Lemma \ref{3.12}).

From \cite{813}, we know that there exist constants $d_{1}>0$ and $d_{2}>0$ such that
\begin{eqnarray*}
H_{K}(t,z)&\geq& d_{1}\sum_{i=1}^{n}
\left(|p_{i}|^{\theta^{-1}\varphi_{i}}+|q_{i}|^{\theta^{-1}\psi_{i}}\right)-d_{2}\\
&=&d_{1}\sum_{i=1}^{n}
\left(|p_{i}|^{\theta^{-1}(1+\frac{\varphi_{i}}{\psi_{i}})}
+|q_{i}|^{\theta^{-1}(1+\frac{\psi_{i}}{\varphi_{i}})}\right)-d_{2},
 \ \ \ \ (t,z)\in \mathbf{R}\times \mathbf{R}^{2n},
\end{eqnarray*}
which indicates an inequality similar to (\ref{3.14}).

\section{The case: $H$ contains a quadratic term}
\setcounter{equation}{0}

Now we consider the case where $H(t,z)=\frac{1}{2}(\hat{B}(t)z,z)+\hat{H}(t,z)$. The proof of the following results are
 similar to that of {Theorems \ref{1.20}, \ref{1.00} and Corollary \ref{1.000}}, we only state the  results.

We set $\omega=\max_{t\in \mathbf{R}}|\hat{B}(t)|$ and suppose $H(t,z)\geq 0,\;(t,z)\in \mathbf{R}\times\mathbf{R}^{2n}$.

\begin{theorem}\label{t5.1}
{Suppose $\hat{H}$ satisfies (H1), (H2), (H3)$'$, (H4), (H5) and $\hat{B}(t)$ satisfies
$(\hat{B}(t)z,z)=2(\hat{B}(t)z,V_{1}(z)), (t,z)\in \mathbf{R}\times\mathbf{R}^{2n}$} and

(H6)  $\hat{B}(t)$ is a $\tau$-periodic, symmetric and continuous matrix function and satisfies
$$(\hat{B}(t)z,z)=2(\hat{B}(t)z,V_{2}(z)),\ \ \ \ (t,z)\in \mathbf{R}\times\mathbf{R}^{2n}.$$
We also require there exists an unbounded sequence $\{\varrho_{m}\}\subset (0,+\infty)$ with $\inf_{m}\varrho_{m}=0$ such that
\begin{equation*}\label{1.0}
(\hat{B}(t)B_{\varrho}z,B_{\varrho}z)=\varrho^{\eta-2}(\hat{B}(t)z,z),\ \ (t,z)\in \mathbf{R}\times\mathbf{R}^{2n}
\end{equation*}
holds for $\varrho\in\{\varrho_{m}\}$,
where $B_{\varrho}z=(\varrho^{\tilde\tau_{1}-1}p_{1}, \cdots, \varrho^{\tilde\tau_{n}-1}p_{n},
\varrho^{\tilde\sigma_{1}-1}q_{1}, \cdots, \varrho^{\tilde\sigma_{n}-1}q_{n})$, $\varrho>0$
with $\eta, \tilde\sigma_{i}, \tilde\tau_{i}$ defined as in Section 3.

\noindent Then for each integer $k\geq 1$ and $k<\frac{2\pi}{\omega \tau}$,
the system (\ref{1.1}) possesses a $kT$-periodic nonconstant solution $z_{k}$ such that
$z_{k}$ and $z_{pk}$ are geometrically distinct provided $p>2n+1$ and $pk<\frac{2\pi}{\omega \tau}$.
If all $z_{k}$ are non-degenerate, then
$z_{k}$ and $z_{pk}$ $(p>1)$ are geometrically distinct.
\end{theorem}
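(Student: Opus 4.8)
The plan is to adapt the proof of Corollary~\ref{1.000} to the situation where $H(t,z)=\frac12(\hat B(t)z,z)+\hat H(t,z)$. The functional becomes $f(z)=\frac12\langle Az,z\rangle-\frac12\langle Bz,z\rangle-\int_0^\tau\hat H(t,z)\,\mathrm{d}t$, where $B$ is the operator associated to $\hat B(t)$; equivalently $f(z)=\frac12\langle(A-\hat B)z,z\rangle-\int_0^\tau\hat H(t,z)\,\mathrm{d}t$. First I would re-examine the (C)$^*$ argument of Lemma~\ref{3.11}: using $(\hat B(t)z,z)=2(\hat B(t)z,V_1(z))$ and $(\hat B(t)z,z)=2(\hat B(t)z,V_2(z))$, the quadratic term contributes nothing to $f(z)-\langle\nabla f(z),V_1(z)\rangle$ (resp.\ with $V_2$), so (H2) and (H3)$'$ on $\hat H$ still give the $L^\beta$-bound and hence boundedness of Cerami sequences exactly as before, while the bound $|z|^{\lambda-1}+1$ on $\hat H''$ plus $\|\hat B\|\le\omega$ keeps $f\in C^2$ and controls the $z_m^\pm$ estimates. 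The splitting $E=E^+\oplus E^0\oplus E^-$ must be replaced by the spectral splitting of $A-\hat B$; Lemma~\ref{2.2} describes its positive/negative/null subspaces in terms of $i_\tau(\hat B),\nu_\tau(\hat B)$, and since $\hat B$ is bounded, $A-\hat B$ is still Fredholm with the same essential spectrum structure, so the dimension counts needed for the link go through with a finite shift.

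Next I would redo the link geometry (Lemmas~\ref{3.12}--\ref{3.170}) with the operator $B_\varrho$ restricted to the sequence $\{\varrho_m\}$. The key identity replacing \eqref{3.1} is
$$\langle(A-\hat B)B_\varrho z,B_\varrho z\rangle=\varrho^{\eta-2}\langle(A-\hat B)z,z\rangle,$$
which is exactly why the hypothesis $(\hat B(t)B_\varrho z,B_\varrho z)=\varrho^{\eta-2}(\hat B(t)z,z)$ is imposed for $\varrho\in\{\varrho_m\}$, combined with $\langle AB_\varrho z,B_\varrho z\rangle=\varrho^{\eta-2}\langle Az,z\rangle$. Because $\inf_m\varrho_m=0$ we can choose $\mu=\varrho_m$ as small as needed for the local positivity estimate of Lemma~\ref{3.12} (working now with the positive eigenspace $M^+(A-\hat B)$ in place of $E^+$), and because $\{\varrho_m\}$ is unbounded we can choose $\nu=\varrho_{m'}>\mu$ as large as needed for Lemma~\ref{3.170} so that $f|_{B_\nu(\partial Q)}\le0$; the condition $|z|\ge A_2$ part of the argument is unchanged since $\hat H$ still satisfies (H5) and $\hat B$ only contributes $\le\omega|z|^2$, which is absorbed. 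The homological link itself (Lemma~\ref{l3.170} via Lemma~\ref{2.6}) only uses that $B_\mu,B_\nu$ are bounded invertible with $\nu>\mu\|B_\nu^{-1}B_\mu\|$ and $PB_\mu^{-1}B_\nu$ invertible on the negative-plus-null part — all of which persist — so Lemma~\ref{2.1} yields a critical point $z$ of $f$ with $m(z)\le\tfrac12\dim E_m+n+1\le m^*(z)$, and then Lemma~\ref{2.2} translates this into $i_\tau(z)\le n+1\le i_\tau(z)+\nu_\tau(z)$ for $B(t)=\hat B(t)+\hat H''_{zz}(t,z(t))$, i.e.\ the analogue of Theorem~\ref{3.160}.

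Having the index estimate $i_{k\tau}(z_k)\le n+1\le i_{k\tau}(z_k)+\nu_{k\tau}(z_k)$ for each admissible $k$, the geometric-distinctness conclusion follows verbatim from the proof of Corollary~\ref{1.000}: if $l\ast z_k=m\ast z_{pk}$ then Proposition~\ref{2.11} and the iteration inequalities Propositions~\ref{2.10} and~\ref{2.12} force $p\le2n+1$ (resp.\ $p=1$ in the nondegenerate case), a contradiction. The restriction $k<\frac{2\pi}{\omega\tau}$ (and $pk<\frac{2\pi}{\omega\tau}$) enters precisely to guarantee that on the $k\tau$-problem the operator $A_{k\tau}-\hat B$ still has the splitting needed for the construction and that the periodic solutions produced are nonconstant — since $\|\hat B\|\le\omega$ and the smallest positive eigenvalue of $A$ on the $k\tau$-space is $\frac{2\pi}{k\tau}$, the condition $k\omega\tau<2\pi$ ensures $A_{k\tau}-\hat B$ is not degenerate in the relevant way and the zero mode is isolated; I would verify this spectral gap condition carefully. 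The main obstacle I anticipate is exactly this last point together with checking that $B_{\varrho}$ along the prescribed sequence $\{\varrho_m\}$ interacts correctly with $\hat B$ on every $k\tau$-space simultaneously (not just the $\tau$-space), since the scaling hypothesis in (H6) is stated on $\mathbf R\times\mathbf R^{2n}$ and must be reconciled with the $k\tau$-periodic functional framework; everything else is a routine transcription of Section~3 with $A$ replaced by $A-\hat B$.
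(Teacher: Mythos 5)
Your outline has the right raw ingredients---the scaling identity $\langle(A-B)B_\varrho z,B_\varrho z\rangle=\varrho^{\eta-2}\langle(A-B)z,z\rangle$ coming from (H6), the role of $\inf_m\varrho_m=0$ and unboundedness of $\{\varrho_m\}$ in choosing admissible $\mu$ and $\nu$, the $(C)^*$ verification being unchanged because $V_1,V_2$ annihilate the quadratic term, and the observation that $k\omega\tau<2\pi$ is a spectral-gap condition. But there is a genuine gap in the proposal, and it is exactly the step you flag but do not resolve: you declare that "the splitting $E=E^+\oplus E^0\oplus E^-$ must be replaced by the spectral splitting of $A-\hat B$" and propose to work with $M^+(A-\hat B)$ in place of $E^+$.

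This is both unnecessary and, as written, internally inconsistent. It is unnecessary because the restriction $k<\frac{2\pi}{\omega\tau}$ is precisely what makes the \emph{original} splitting work: on $E^+$ of the $k\tau$-space, $\langle(A-B)z,z\rangle\geq\bigl(\frac{2\pi}{k\tau}-\omega\bigr)\|z\|^2>0$, so Lemma~\ref{3.12} goes through with $S\subset E^+$ exactly as in Section~3, while Lemma~\ref{3.170} only costs replacing the constant $\frac{\pi}{k\tau}$ by $\frac{\pi}{k\tau}+2\omega$, which is absorbed by a larger $A_1$ via (H5). That is the route the paper takes (``we can define $B_\mu$ for small $\mu\in\{\varrho_m\}$ and $B_\nu$ for large $\nu\in\{\varrho_m\}$ as in Section~3''). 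It is inconsistent because if you genuinely take $M_2=M^-(A-\hat B)\oplus M^0(A-\hat B)$, then by Lemma~\ref{2.2} applied to $\hat B$ itself one has $\dim M_2=\frac12\dim E_m+i_\tau(\hat B)+\nu_\tau(\hat B)$, not $\frac12\dim E_m+n$; yet you still write $m(z)\le\frac12\dim E_m+n+1$. These agree only when $i_\tau(\hat B)+\nu_\tau(\hat B)=n$, which is the semi-positive-definite case the paper is explicitly not assuming (Remark~\ref{5.4}). Moreover the operators $B_\rho$ are componentwise scalings commuting with neither $J$ nor $A-\hat B$, so the required invertibility of $PB_\mu^{-1}B_\nu$ on the negative-plus-null subspace (Lemma~\ref{2.6}) is a new claim once $M_2$ is changed; it is not the fact cited from the references for $E^-\oplus E^0$, and you assert "all of which persist" without argument. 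Keeping $X_m=E_m^-\oplus E^0$ avoids all of this: the link geometry is literally the one of Section~3 (with $\mu,\nu\in\{\varrho_m\}$), the index bound $m(z_m)\le\dim X_m+1\le m^*(z_m)$ is unchanged, and only in the final Morse-index translation does the full $B(t)=\hat B(t)+\hat H''_{zz}(t,z(t))$ enter through Lemma~\ref{2.2}.
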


Note that (H6) is satisfied if $b_{ij}(t)=0$ whenever $|i-j|\neq n$.
If $\alpha_{i}=\beta_{i}=2$,
$\sigma_{i}=\tau_{i}=1$ $(i=1, 2, \cdots, n)$, then
$\hat{B}(t)$ is just a $\tau$-periodic, symmetric and continuous matrix function.

Similarly we have the following results.

\begin{corollary}\label{5.2}
{Replace (H3)$'$ with (H3)}, then we have the same results as in Theorem \ref{t5.1}.
\end{corollary}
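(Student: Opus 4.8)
The plan is to reduce Corollary \ref{5.2} to Theorem \ref{t5.1} in exactly the way Corollary \ref{1.000} was reduced to Theorem \ref{1.20} earlier in the paper. First I would observe that the only place (H3)$'$ enters the proof of Theorem \ref{t5.1} is in the verification of the (C)$^*$ (or (PS)$^*$) compactness condition for the functional $f(z)=\frac12\langle Az,z\rangle-\frac12\int_0^\tau(\hat B(t)z,z)\,\mathrm dt-\int_0^\tau\hat H(t,z)\,\mathrm dt$ on the finite-dimensional spaces $E_m$, and in the last step where geometric distinctness of the subharmonics is deduced from the iteration inequalities of Propositions \ref{2.10} and \ref{2.12}. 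So the content of the corollary is precisely: if $\hat H$ satisfies the stronger hypothesis (H3) in place of (H3)$'$, the same conclusions persist.

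Next I would invoke the observation already made right after Theorem \ref{1.20}, namely that (H3) implies (H3)$'$. Concretely, if $\hat H'_z(t,z)\cdot V_1(z)-\hat H(t,z)\ge c_1|z|^\beta-c_2$ holds with $V_1=V(\alpha,\beta)$, then taking $V_2=V(\xi,\eta)$ with $\xi_i=\alpha_i$, $\eta_i=\beta_i$ and using the growth bound (H3) on $\hat H''_{zz}$ (hence a polynomial bound $|\hat H'_z(t,z)|\le c(|z|^\lambda+1)$ with $\lambda<1+\beta$), one gets $|\hat H'_z(t,z)|\le c_1'|z|^\beta+c_2'\le c_1''\,(\hat H'_z(t,z)\cdot V_2(z)-\hat H(t,z))+c_2''$ for suitable constants, since $\beta<\lambda+$(margin) is not the issue here — rather $|z|^\lambda$ is controlled by $|z|^\beta$ only up to the cut-off region, and this is exactly why the cut-off argument in the Remarks following Theorem \ref{3.160} is needed. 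Thus strictly speaking the reduction runs: (H3) $\Rightarrow$ (together with the other hypotheses) the modified Hamiltonian $H_K$ satisfies (H1), (H2), (H3)$'$, (H4), (H5), uniformly in $K$; apply Theorem \ref{t5.1} to $H_K$; then use (H3) again to show the critical point $z_K$ is independent of $K$ for large $K$ and solves \eqref{1.1}.

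Carrying this out, I would: (1) state that under (H3) one may replace $\hat H$ by the cut-off $\hat H_K$ exactly as in the Remark after Theorem \ref{3.160}, noting that the quadratic term $\frac12(\hat B(t)z,z)$ is untouched by the cut-off and the structural identities on $\hat B$ required by Theorem \ref{t5.1} (the $V_1$-identity, (H6), and the scaling identity under $B_\varrho$) do not involve $\hat H$ at all, so they continue to hold; (2) check that $\hat H_K$ satisfies (H1), (H2), (H4), (H5) and (H3)$'$ with constants independent of $K$, which is the same computation referenced from \cite{109}; (3) apply Theorem \ref{t5.1} to $\hat H_K$ to get, for each $k\ge1$ with $k<\frac{2\pi}{\omega\tau}$, a nonconstant $k\tau$-periodic solution $z_k^{(K)}$ with the index bound $i_{k\tau}(z_k^{(K)})\le n+1\le i_{k\tau}(z_k^{(K)})+\nu_{k\tau}(z_k^{(K)})$ and the energy bound $0<\delta_K<f_K(z_k^{(K)})\le\frac{2\pi}{\tau}\nu^\eta$; (4) use (H3) to show $z_k^{(K)}$ is independent of $K$ for $K$ large and hence solves the original system \eqref{1.1}; (5) run the geometric-distinctness argument verbatim from the proof of Corollary \ref{1.000} — Propositions \ref{2.11}, \ref{2.10} force $p-n\le n+1$ unless $z_k$ and $z_{pk}$ are geometrically distinct, contradicting $p>2n+1$, and in the non-degenerate case Proposition \ref{2.12} forces $p+n\le n+1$ hence $p=1$.

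The main obstacle, such as it is, is item (4): verifying a priori that the homologically-linked critical point produced for the cut-off Hamiltonian stays in the region $|z|\le K$ for all large $K$, so that it is genuinely a solution of the uncut problem. This is where (H3) (not merely (H3)$'$) is used, via an $L^\infty$ bound on $z_K$ obtained from the $L^\beta$ bound of Lemma \ref{3.11}-type estimates together with the polynomial control of $\hat H'_z$; the argument is the one sketched in \cite{109} and reproduced in the Remark after Theorem \ref{3.160}, and it carries over unchanged because the extra quadratic term $\frac12(\hat B(t)z,z)$ only contributes a linearly-growing gradient that is absorbed into the same estimates (using $k<\frac{2\pi}{\omega\tau}$ so that $A-\hat B$ retains the needed spectral gap). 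Everything else is a routine transcription of the already-given proofs, so the corollary follows.
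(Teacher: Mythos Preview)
Your approach differs from the paper's and has the logic of the cut-off inverted. The paper (see the unnumbered Remark following Remark~\ref{5.4}) proves Corollary~\ref{5.2} \emph{directly}, mirroring the direct proof of Corollary~\ref{1.000} in Section~3: since $\hat H$ satisfies (H3), so does $H=\tfrac12(\hat B(t)z,z)+\hat H$, hence $f\in C^2(E,\mathbf R)$; the $V_1$-identity on $\hat B$ yields $f(z)-\langle\nabla f(z),V_1(z)\rangle=\int_0^\tau(\hat H'_z\cdot V_1(z)-\hat H)\,\mathrm dt$, and the scaling identity in (H6) yields $\langle BB_\varrho z,B_\varrho z\rangle=\varrho^{\eta-2}\langle Bz,z\rangle$; with these two ingredients Lemmas~\ref{3.11}--\ref{3.170} and Theorem~\ref{3.160} go through verbatim, taking $\mu,\nu\in\{\varrho_m\}$. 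No cut-off is involved. In the paper the cut-off appears only when proving Theorem~\ref{1.20} under the \emph{weaker} (H3)$'$, where $f$ need not be $C^2$: one truncates to manufacture (H3), runs the Section~3 machinery, and then uses (H3)$'$ (not (H3), contrary to your step~(4)) to show $z_K$ stabilises. Corollary~\ref{1.000} was never ``reduced to Theorem~\ref{1.20}'' by a cut-off --- it was proved first and directly.

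Your step~(2) is moreover not justified by the Remark you cite. That Remark asserts $H_K$ satisfies (H3)$'$ with $K$-independent constants \emph{when the original function already satisfies (H3)$'$}; under (H3) alone this need not follow, since on $\{|z|\le K\}$ one has $\hat H_K=\hat H$, so $K$-uniform constants for (H3)$'$ there amount to (H3)$'$ for $\hat H$ globally --- precisely the implication you yourself flag as problematic when $\lambda>\beta$. Either accept the paper's assertion that (H3) is stronger than (H3)$'$, in which case Corollary~\ref{5.2} is an immediate one-line consequence of Theorem~\ref{t5.1} with no cut-off whatsoever, or follow the paper's direct route under (H3); your hybrid does not close the gap it sets out to avoid.
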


\begin{theorem}\label{5.3}
Suppose $\hat{H}$ satisfies (C1)-(C4) and $\hat{B}(t)$ satisfies

(C5)  $\;\hat{B}(t)$ is a $\tau$-periodic, symmetric and continuous matrix function and satisfies
$$(\hat{B}(t)z,z)=2(\hat{B}(t)z,V_{3}(z)),\ \ \ \ (t,z)\in \mathbf{R}\times\mathbf{R}^{2n}.$$
Moreover,
set $\hat{\eta}=\max_{1\leq i\leq n}\{\varphi_{i}+\psi_{i}\}$,
$\hat{\sigma}_{i}=\frac{\hat{\eta}}{\varphi_{i}+\psi_{i}}\varphi_{i}$ and
$\hat{\tau}_{i}=\frac{\hat{\eta}}{\varphi_{i}+\psi_{i}}\psi_{i}$,
we require that there exists an unbounded sequence $\{\varrho_{m}\}\subset (0,+\infty)$ with $\inf_{m}\varrho_{m}=0$ such that
\begin{equation*}\label{1.02}
(\hat{B}(t)B_{\varrho}z,B_{\varrho}z)=\varrho^{\hat{\eta}-2}(\hat{B}(t)z,z),\ \ (t,z)\in \mathbf{R}\times\mathbf{R}^{2n},
\end{equation*}
holds for $\varrho\in\{\varrho_{m}\}$,
where $B_{\varrho}z=(\varrho^{\hat\tau_{1}-1}p_{1}, \cdots, \varrho^{\hat\tau_{n}-1}p_{n},
\varrho^{\hat\sigma_{1}-1}q_{1}, \cdots, \varrho^{\hat\sigma_{n}-1}q_{n})$, $\varrho>0$. Then we have the same results as in Theorem \ref{t5.1}.
\end{theorem}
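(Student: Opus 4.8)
The plan is to reduce Theorem~\ref{5.3} to the already-established machinery by mimicking, step by step, the proof of Theorem~\ref{3.160} (together with its variant under conditions (C1)--(C4)), but now working with the functional
\[
f(z)=\frac12\langle Az,z\rangle-\frac12\int_0^\tau(\hat B(t)z,z)\,\mathrm dt-\int_0^\tau\hat H(t,z)\,\mathrm dt,
\qquad z\in E .
\]
The first step is to record that, by (C5) and the hypothesis $H(t,z)\ge0$, the operator $\tilde A:=A-\hat B$ (where $\hat B$ is the bounded self-adjoint operator on $E$ associated with $\hat B(t)$) plays the role that $A$ plays in Section~3: its spectral decomposition furnishes subspaces $\tilde E^{\pm},\tilde E^0$ with the analogue of Lemma~\ref{2.2}, and the index information at a critical point $z$ will come out in terms of $i_\tau(\hat B+\hat H''_{zz}(\cdot,z(\cdot)))$ rather than $i_\tau(H''_{zz})$. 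I would then transport the (C)$^*$/(PS)$^*$-type compactness (Lemma~\ref{3.11} and the (C2)--(C3) version from Section~3), the linking lemmas (Lemmas~\ref{3.12}, \ref{l3.170}, \ref{3.170}) and the cut-off argument verbatim, the only genuinely new inputs being (i) the identity $(\hat B(t)z,z)=2(\hat B(t)z,V_3(z))$, which guarantees that the quadratic term behaves under the Euler-type vector field $V_3$ exactly like a homogeneous term of the correct degree, so that the manipulations leading to the analogues of (\ref{3.16}) and (\ref{3.20}) go through; and (ii) the scaling identity $(\hat B(t)B_\varrho z,B_\varrho z)=\varrho^{\hat\eta-2}(\hat B(t)z,z)$ for $\varrho\in\{\varrho_m\}$, which makes the quadratic term scale under $B_\varrho$ with the same exponent $\varrho^{\hat\eta-2}$ as $\langle Az,z\rangle$ does in (\ref{3.1}); this is what lets the linking geometry on $B_\mu(S_m)$ and $B_\nu(\partial Q_m)$ survive. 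Here the sequence $\{\varrho_m\}$ with $\inf_m\varrho_m=0$ and $\sup_m\varrho_m=\infty$ is used precisely so that one may pick $\mu=\varrho_{m_1}$ small and $\nu=\varrho_{m_2}$ large along this sequence, which is all the linking argument requires.

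The second step is to run Lemma~\ref{2.1} (in the finite-dimensional, (C)-condition form of Remark~\ref{2.01}) on $f_m:=f|_{E_m}$ and pass to the limit, obtaining a nonconstant $\tau$-periodic solution $z$ of (\ref{1.1}) with
\[
i_\tau(z)\le n+1\le i_\tau(z)+\nu_\tau(z),
\]
where now $i_\tau(z),\nu_\tau(z)$ denote the Maslov-type indices of $B(t)=\hat B(t)+\hat H''_{zz}(t,z(t))=H''_{zz}(t,z(t))$, so the meaning of (\ref{3.161}) is unchanged. The replacement of $A$ by $A-\hat B$ throughout the Morse-index comparison (\ref{3.9})--(\ref{3.10}) and the application of Lemma~\ref{2.2} is routine once the spectral setup of the first step is in place. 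I then invoke the $k\tau$-periodicity of $H$ and re-run the argument on $S_{k\tau}$, obtaining for each admissible $k$ a nonconstant $k\tau$-periodic solution $z_k$ with $i_{k\tau}(z_k)\le n+1\le i_{k\tau}(z_k)+\nu_{k\tau}(z_k)$, and finish exactly as in the proof of Corollary~\ref{1.000}: Propositions~\ref{2.11}, \ref{2.10}, \ref{2.12} turn the assumption that $z_k$ and $z_{pk}$ are geometrically non-distinct into $p-n\le n+1$ (hence the condition $p>2n+1$), and, in the non-degenerate case, into $p+n\le n+1$ (hence $p=1$). The only place where the $\omega$-restrictions $k<\frac{2\pi}{\omega\tau}$ and $pk<\frac{2\pi}{\omega\tau}$ enter is in ensuring that on $E^0=\mathbf R^{2n}$ and on the low-frequency part of $E$ the form $\langle(A-\hat B)z,z\rangle$ still has the sign pattern (positive on $E^+$, negative on $E^-$, controlled on $E^0$) needed for Lemmas~\ref{3.12} and \ref{3.170}; since $\|\hat B\|\le\omega$ and $A z^{\pm}=\pm\frac{2\pi k}{\tau}z^{\pm}$ on the $k$-fold problem, $k<\frac{2\pi}{\omega\tau}$ is precisely the threshold below which $\hat B$ is a small perturbation of $A$ in the relevant directions.

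The main obstacle I anticipate is not any single estimate but the bookkeeping around the two algebraic identities for $\hat B$: one must check that the identity $(\hat B z,z)=2(\hat B z,V_3(z))$ is exactly the condition that keeps the Euler-vector-field computations (both the (C)$^*$ argument and the degree $\lambda_0$ cut-off modification) compatible with the presence of the quadratic term, and that the scaling identity for $B_\varrho$ is exactly what is needed at the two scales $\mu$ and $\nu$ so that (\ref{3.1}) survives with $\eta$ replaced by $\hat\eta$. Verifying that $\hat\sigma_i,\hat\tau_i$ as defined (via $\hat\eta=\max_i\{\varphi_i+\psi_i\}$) satisfy $\tilde\sigma_i=\frac{\hat\eta\,\hat\sigma_i}{\hat\sigma_i+\hat\tau_i}\ge1$, i.e. the normalization compatible with $B_\varrho$ having $\|B_\varrho\|\le1$ for $\varrho\le1$, is a short computation but must be done with care; once it is in hand, every remaining lemma from Section~3 applies with only cosmetic changes, and the proof of Theorem~\ref{5.3} is complete. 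I would therefore present the argument as: ``set up $A-\hat B$ and its spectrum; observe (C5) and the scaling identity make Lemmas~\ref{3.11}--\ref{3.170} go through mutatis mutandis; conclude as in Theorem~\ref{3.160} and Corollary~\ref{1.000}.''
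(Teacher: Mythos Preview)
Your overall outline is correct and matches the paper's own approach, which is merely sketched: the paper states that the proofs in Section~4 are ``similar to'' those of Section~3 and, in the remark following Corollary~\ref{5.2}, records precisely the two algebraic inputs you isolate---the identity $(\hat B(t)z,z)=2(\hat B(t)z,V_3(z))$ (so the quadratic term drops out of $f(z)-\langle\nabla f(z),V_3(z)\rangle$) and the scaling law $\langle \hat B B_\varrho z,B_\varrho z\rangle=\varrho^{\hat\eta-2}\langle \hat B z,z\rangle$ for $\varrho\in\{\varrho_m\}$ (so one may take $\mu$ small and $\nu$ large along that sequence and the linking geometry survives).

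Two corrections. First, drop the passage through $\tilde A=A-\hat B$ and its spectral decomposition $\tilde E^{\pm},\tilde E^0$: the paper keeps the original $A$-splitting $E^+\oplus E^0\oplus E^-$ throughout. This is not merely a stylistic choice---the anisotropic dilations $B_\varrho$ act diagonally on the Fourier decomposition and hence preserve $E^\pm$, but there is no reason for them to preserve the $\tilde A$-eigenspaces, so Lemmas~\ref{l3.170} and~\ref{3.170} would not transfer to a $\tilde E^\pm$-based linking. Moreover, since $H''_{zz}=\hat B+\hat H''_{zz}$, Lemma~\ref{2.2} already yields $i_\tau(z)=i_\tau(H''_{zz})$ directly from $f''(z)=A-B$ with $B$ associated to $H''_{zz}$; there is nothing to gain by writing $A-B=(A-\hat B)-\hat H''_{zz}$. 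Second, on the $k\tau$-problem the eigenvalues are $Az^\pm=\pm\frac{2\pi}{k\tau}z^\pm$, not $\pm\frac{2\pi k}{\tau}z^\pm$; with the correct sign the requirement $\omega<\frac{2\pi}{k\tau}$, i.e.\ $k<\frac{2\pi}{\omega\tau}$, is exactly what forces $\langle(A-\hat B)z,z\rangle$ to remain positive on $E^+$ and negative on $E^-$, which is all that Lemmas~\ref{3.12} and~\ref{3.170} need.
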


\begin{remark}\label{5.4}
If $\varphi_{i}=\psi_{i}$ $(i=1, 2, \cdots, n)$, then
(C1)-(C5) are the conditions in \cite{102} with the difference that
we require $H(t,z)=\frac{1}{2}(\hat{B}(t)z,z)+\hat{H}(t,z)\geq 0$ instead of
$\Hat{B}(t)$ being semi-positive-definite. Thus
Theorem \ref{5.3} generalizes the theorems in \cite{102} in the semi-positive-definite case.
\end{remark}

For $z, \zeta\in E$,
define $\langle Bz, \zeta\rangle=\int^{\tau}_{0}(\hat{B}(t)z,\zeta)\text{d}t$,
then $B$ is a linear bounded and self-adjoint operator on $E$ and
$|\langle Bz, z\rangle|\leq\omega \|z\|^{2}.$

\begin{remark}
The key point of the proof of {Corollary \ref{5.2}} is that if (H6) holds, then we have $f(z)-f'(z)V_{1}(z)
=\int_{0}^{\tau}(\hat{H}'_{z}(t,z)\cdot V_{1}(z)-\hat{H}(t,z))\text{d}t$ and
$\langle BB_{\rho}z,B_{\rho}z\rangle=\rho^{\eta-2}\langle Bz,z\rangle$,
where $z\in E$, and $f$, $\eta$ and $B_{\rho}$ $(\rho>0)$ are defined in Section 3.
Note that $H$ satisfies (H3) if $\hat{H}$ does. Then the proof of (C)$^{*}$ condition is the same as that of Lemma \ref{3.11}.
We can define $B_{\mu}$ for small $\mu\in \{\varrho_{m}\}$ and $B_{\nu}$ for large $\nu\in \{\varrho_{m}\}$
as in Section 3. So the arguments can be applied to the current case.
\end{remark}

The first equation in (H6) implies that
$\hat{B}(t)=\hat{B}(t)V_{1}+V_{1}\hat{B}(t)$, $t\in \mathbf{R}$. For $\beta$ in (H2), we require that
$\beta\geq2$, so there exists $c_{\beta}>0$ such that
$\|z\|_{L^{\beta}}\geq c_{\beta}\|z\|_{L^{2}}$, $z\in L^{\beta}(S_{\tau},\mathbf{R}^{2n})$. So similarly we have the following result.

\begin{theorem}\label{5.10}
Suppose $\hat{H}$ satisfies (H1)-(H5) and $\hat{B}(t)$ satisfies

(H6)$'$  $\hat{B}(t)$ is a $\tau$-periodic, symmetric and continuous matrix function with
$\max_{t\in \mathbf{R}}|\hat{B}(t)-\hat{B}(t)V_{1}-V_{1}\hat{B}(t)|<c_{1}c_{\beta}$
and satisfies
$$\displaystyle\limsup_{\varrho\rightarrow0^{+}}(\hat{B}(t)B_{\varrho}z,B_{\varrho}z)\leq\omega_{1}\varrho^{\eta-2}\ \ and\ \
\displaystyle\liminf_{\varrho\rightarrow+\infty}(\hat{B}(t)B_{\varrho}z,B_{\varrho}z)\geq\omega_{2}\varrho^{\eta-2}$$
hold uniformly for $(t,z)\in \mathbf{R}\times\mathbf{R}^{2n}$ and $|z|=1$,
where $c_{1}$ is as in (H2) and $\omega_{1}, \omega_{2}\geq0$.

\noindent Then we have the same results as in Theorem \ref{t5.1}.
\end{theorem}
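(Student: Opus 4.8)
The plan is to reduce Theorem \ref{5.10} to the case already treated in Theorem \ref{3.160}, working with the functional $f(z)=\frac12\langle Az,z\rangle-\frac12\langle Bz,z\rangle-\int_0^\tau \hat H(t,z)\,dt$ on $E$, where $\langle Bz,\zeta\rangle=\int_0^\tau(\hat B(t)z,\zeta)\,dt$ is bounded with $|\langle Bz,z\rangle|\le\omega\|z\|^2$. Since $\hat B$ is continuous and symmetric, $f\in C^2(E,\mathbf R)$ once $\hat H$ satisfies (H3). First I would re-examine the four ingredients feeding Lemma \ref{2.1}: the (C)$^*$ condition (Lemma \ref{3.11}), the positivity estimate $\inf_{B_\mu(S_m)}f_m\ge\delta>0$ (Lemma \ref{3.12}), the homological linking (Lemma \ref{l3.170}), and the estimate $f_m|_{B_\nu(\partial Q_m)}\le 0$ (Lemma \ref{3.170}). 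The linking lemma is purely geometric and is untouched by the added quadratic term, so it carries over verbatim.

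For the (C)$^*$ condition, the key identity is that $f(z)-\langle f'(z),V_1(z)\rangle=\frac12\langle Bz,z\rangle-\langle Bz,V_1(z)\rangle+\int_0^\tau(\hat H'_z(t,z)\cdot V_1(z)-\hat H(t,z))\,dt$; the bound $\max_{t}|\hat B(t)-\hat B(t)V_1-V_1\hat B(t)|<c_1c_\beta$ together with $\|z\|_{L^\beta}\ge c_\beta\|z\|_{L^2}$ is precisely what is needed so that the quadratic contamination $\langle Bz,z\rangle-2\langle Bz,V_1(z)\rangle$ can be absorbed into the coercive term $c_1\|z\|_{L^\beta}^\beta$ coming from (H2) (using $\beta\ge2$). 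After that, the splitting argument estimating $\|z_m^\pm\|/\|z_m\|\to0$ and $|z_m^0|/\|z_m\|\to 0$ runs as in Lemma \ref{3.11}, since $B$ is a compact-type perturbation controlled by $\omega\|z\|^2$ and $\|z_m^\pm\|^2$ dominates via $\langle Az_m,z_m^\pm\rangle=\pm\frac{2\pi}{\tau}\|z_m^\pm\|^2$ as soon as $\frac{2\pi}{\tau}>\omega$, which is guaranteed because we only need $k\tau$-periodic solutions with $k<\frac{2\pi}{\omega\tau}$.

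For the two size estimates, I would use the asymptotic hypotheses in (H6)$'$. Near $0$: since $\limsup_{\varrho\to0^+}(\hat B(t)B_\varrho z,B_\varrho z)\le\omega_1\varrho^{\eta-2}$ uniformly on $|z|=1$, one gets $\langle BB_\mu z,B_\mu z\rangle\le(\omega_1+o(1))\mu^{\eta-2}\cdot O(\mu^{2})$-type control, so that $f(B_\mu z)\ge\frac{\pi}{\tau}\mu^\eta-(\text{small})\mu^\eta-(\text{lower order})$ for small $\mu$ in a suitable sequence $\{\varrho_m\}$ — the same computation as Lemma \ref{3.12} with the extra $-\frac12\langle BB_\mu z,B_\mu z\rangle$ term made negligible by choosing $\mu$ small. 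Near $\infty$: the $\liminf$ condition gives $\langle BB_\nu z,B_\nu z\rangle\ge(\omega_2-o(1))\nu^{\eta-2}\|z\|_{L^2}^2\ge0$, so the quadratic term only helps the inequality $f(B_\nu z)\le0$, and Lemma \ref{3.170}'s use of (H5) through \eqref{3.14} goes through unchanged. Having re-established (i) and (ii) of Lemma \ref{2.1} (with (C) replacing (PS) via Remark \ref{2.01}), the critical point $z_m$ with $m(z_m)\le\dim X_m+1\le m^*(z_m)$ exists, converges to a nonconstant solution $z$ by the (C)$^*$ condition, and Lemma \ref{2.2} applied to $A-B-B(t)$ with $B(t)=\hat H''_{zz}(t,z)+\hat B(t)=H''_{zz}(t,z)$ yields $i_\tau(z)\le n+1\le i_\tau(z)+\nu_\tau(z)$; the iteration inequalities of Propositions \ref{2.10} and \ref{2.12} then finish exactly as in the proof of Corollary \ref{1.000}, with the periods restricted by $k<\frac{2\pi}{\omega\tau}$ to keep $A-B$ invertible on the relevant subspaces.

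The main obstacle is the (C)$^*$ estimate: unlike the case (H6) where $(\hat Bz,z)=2(\hat Bz,V_1(z))$ holds identically and the quadratic term simply drops out, here one only has the norm bound $\max_t|\hat B-\hat BV_1-V_1\hat B|<c_1c_\beta$, so one must show that the residual quadratic term $\langle(B - BV_1 - V_1 B)z,z\rangle$ is genuinely dominated by $c_1\|z\|_{L^\beta}^\beta - c_2$ along a (C)$^*$ sequence. This needs $\beta\ge2$ (so that $\|z\|_{L^\beta}^\beta\ge c_\beta^\beta\|z\|_{L^2}^\beta\ge c_\beta^\beta\|z\|_{L^2}^2$ once $\|z\|_{L^2}$ is large, or is otherwise bounded) and a careful tracking of the strict inequality constant; it is the only place where the hypotheses of Theorem \ref{5.10} are strictly weaker than those of Corollary \ref{5.2} and must be exploited quantitatively rather than by an algebraic identity.
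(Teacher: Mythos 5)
Your proposal follows the same route the paper sketches for Section 4 (the paper gives no explicit proof of Theorem~\ref{5.10}, only the remarks preceding it): work with $f(z)=\frac12\langle (A-B)z,z\rangle-\int_0^\tau\hat H\,dt$, re-verify the (C)$^*$ condition, the two size estimates and the linking geometry for Lemma~\ref{2.1}, and then port the index bound from Theorem~\ref{3.160} and the iteration argument from Corollary~\ref{1.000}. You also correctly identify the two places where (H6)$'$ replaces the algebraic identity of (H6): the residual $-\frac12\int_0^\tau\bigl((\hat B-\hat BV_1-V_1\hat B)z,z\bigr)dt$ must be absorbed into (H2) using $\beta\ge2$ and $\|z\|_{L^\beta}\ge c_\beta\|z\|_{L^2}$, and the $\limsup/\liminf$ conditions replace the exact scaling $\langle BB_\rho z,B_\rho z\rangle=\rho^{\eta-2}\langle Bz,z\rangle$.

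Two of your steps, however, contain gaps. First, in the near-zero estimate the term $\frac12\langle BB_\mu z,B_\mu z\rangle$ is \emph{not} negligible for small $\mu$: on $\{z\in E^+:\|z\|=\mu\}$ one only gets $\frac12\langle BB_\mu z,B_\mu z\rangle\le\frac12(\omega_1+o(1))\mu^{\eta-2}\|z\|_{L^2}^2\le\frac12(\omega_1+o(1))\mu^\eta$, which is of the \emph{same} order $\mu^\eta$ as the leading term $\frac{\pi}{\tau}\mu^\eta$. The positivity $\inf_{B_\mu(S)}f>0$ therefore hinges on comparing the constants $\frac{\pi}{k\tau}$ and $\frac12\omega_1$ (this is precisely where the restriction $k<\frac{2\pi}{\omega\tau}$ from Theorem~\ref{t5.1} is used), not on shrinking $\mu$. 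Second, in the splitting part of (C)$^*$ the crude bound $|\langle Bz_m,z_m^\pm\rangle|\le\omega\|z_m\|\,\|z_m^\pm\|$ only yields $\frac{\|z_m^\pm\|}{\|z_m\|}\le\frac{\tau\omega}{2\pi}+o(1)$, a constant less than $1$; this does not contradict $\frac{\|z_m^+\|^2+\|z_m^-\|^2+\tau|z_m^0|^2}{\|z_m\|^2}=1$ unless $\omega$ is considerably smaller. The correct estimate uses the fact already obtained from (H2) (after absorbing the residual with $c_\beta$ and $\beta\ge2$) that $\{\|z_m\|_{L^\beta}\}$ and hence $\{\|z_m\|_{L^2}\}$ are bounded, so $|\langle Bz_m,z_m^\pm\rangle|\le\omega\|z_m\|_{L^2}\|z_m^\pm\|_{L^2}\le C\|z_m^\pm\|$ with $C$ independent of $m$; dividing by $\|z_m\|\,\|z_m^\pm\|$ then really gives $\frac{\|z_m^\pm\|}{\|z_m\|}\to0$. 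Once these two estimates are tightened, the rest of your argument goes through as you describe.
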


For $\theta, \varphi_{i}, \psi_{i}$ $(i=1, 2, \cdots, n)$ in (C2), we require that
$\gamma\geq2\theta$, so there exists $c_{\gamma}>0$ such that
$\|z\|_{L^{\gamma}}\geq c_{\gamma}\|z\|_{L^{2}}$, $z\in L^{\gamma}(S_{\tau},\mathbf{R}^{2n})$,
where $\gamma=\varphi_{i}, \psi_{i}$ $(i=1, 2, \cdots, n)$. So similarly we also have the following result.

\begin{theorem}\label{5.11}
Suppose $H$ satisfies a condition similar to (C3),
$\hat{H}$ satisfies (C1)-(C5) and $\hat{B}(t)$ satisfies

(C5)$'$  $\hat{B}(t)$ is a $\tau$-periodic, symmetric and continuous matrix function with
$\max_{t\in \mathbf{R}}|\hat{B}(t)-\hat{B}(t)V_{3}-V_{3}\hat{B}(t)|<c_{1}\min_{1\leq i\leq n}\{c_{\varphi_{i}}, c_{\psi_{i}}\}$
and satisfies
$$\displaystyle\limsup_{\varrho\rightarrow0^{+}}(\hat{B}(t)B_{\varrho}z,B_{\varrho}z)\leq\omega_{3}\varrho^{\hat{\eta}-2}\ \ and\ \
\displaystyle\liminf_{\varrho\rightarrow+\infty}(\hat{B}(t)B_{\varrho}z,B_{\varrho}z)\geq\omega_{4}\varrho^{\hat{\eta}-2}$$
hold uniformly for $(t,z)\in \mathbf{R}\times\mathbf{R}^{2n}$ and $|z|=1$,
where $\hat{\eta}$ and $B_{\varrho}$ are as in Theorem \ref{5.3} and $\omega_{3}, \omega_{4}\geq0$.

\noindent Then we have the same results as in Theorem \ref{t5.1}.
\end{theorem}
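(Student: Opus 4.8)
The proof follows the same route as Theorems \ref{1.20}, \ref{1.00} and \ref{t5.1}, the only novelty being the bookkeeping forced by the fact that $\hat B$ no longer scales exactly. Fix an integer $k$ with $k<2\pi/(\omega\tau)$ and work on $E=W^{1/2,2}(S_{k\tau},\mathbf{R}^{2n})$ with the functional $f(z)=\frac{1}{2}\langle Az,z\rangle-\frac{1}{2}\langle Bz,z\rangle-\int_0^{k\tau}\hat H(t,z)\,\text{d}t$, where $B$ is the bounded self-adjoint operator attached to $\hat B$, so that $|\langle Bz,z\rangle|\le\omega\|z\|^2$ and, since $k<2\pi/(\omega\tau)$, $\langle(A-B)z^\pm,z^\pm\rangle$ is positive on $E^+$ and negative on $E^-$. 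The plan is: produce a critical point $z$ of $f$ carrying Morse-index information by the homological linking Lemma \ref{2.1} in the Galerkin form of Remark \ref{2.01}, convert that information into $i_{k\tau}(z)\le n+1\le i_{k\tau}(z)+\nu_{k\tau}(z)$ via Lemma \ref{2.2}, and then run the iteration inequalities of Propositions \ref{2.11}, \ref{2.10} and \ref{2.12} exactly as in the proof of Corollary \ref{1.000} to obtain the geometric-distinctness statements (the condition $pk<2\pi/(\omega\tau)$ being precisely what keeps the linking geometry available at the $pk$-level). As in Section 3, one first replaces $H$ by a cut-off Hamiltonian $H_K$ (cf.\ the remarks following Theorem \ref{3.160} and \cite{813}) so that the compactness condition holds, obtains a critical point $z_K$, and then uses the (C3)-type growth to see that for $K$ large $z_K$ is a nonconstant $k\tau$-periodic solution of (\ref{1.1}) independent of $K$, the nonconstancy coming from the critical value being $\ge\delta>0$ while $H\ge0$.

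First I would verify the (C)$^*$ condition with respect to $\{E_m\}$, following Lemma \ref{3.11} and \cite{813}. Using $V_3=V(\varphi^{-1},\psi^{-1})$ with $\varphi_i^{-1}+\psi_i^{-1}=1$, integration by parts gives $\langle Az,V_3(z)\rangle=\frac{1}{2}\langle Az,z\rangle$, hence
$f_m(z)-\langle\nabla f_m(z),V_3(z)\rangle=\int_0^{k\tau}(\hat H'_z(t,z)\cdot V_3(z)-\hat H(t,z))\,\text{d}t+\frac{1}{2}\int_0^{k\tau}((\hat BV_3+V_3\hat B-\hat B)z,z)\,\text{d}t.$
Relative to Theorem \ref{5.3} the new term is the residual $\hat B$-integral; by (C5)$'$ it is bounded by $\frac{1}{2}c_1\min_{1\le i\le n}\{c_{\varphi_i},c_{\psi_i}\}\|z\|_{L^2}^2$, and since $\gamma\ge2\theta$ gives $\|z\|_{L^2}\le c_\gamma^{-1}\|z\|_{L^\gamma}$ for $\gamma\in\{\varphi_i,\psi_i\}$, it is absorbed by the coercive part of $\int_0^{k\tau}(\hat H'_z\cdot V_3(z)-\hat H)\,\text{d}t$ produced by (C2). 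Thus the $L^{\theta^{-1}\varphi_i}$, $L^{\theta^{-1}\psi_i}$ norms of $z_m$ stay bounded; the assumed (C3)-type bound on $H'_z$ then controls $\int_0^{k\tau}|H'_z(t,z_m)\cdot z_m^\pm|\,\text{d}t$, and the splitting $z_m=z_m^++z_m^0+z_m^-$ gives $\|z_m^\pm\|/\|z_m\|\to0$ and $|z_m^0|/\|z_m\|\to0$, contradicting $\|z_m\|\to\infty$. Boundedness of $\{z_m\}$ and the compact embedding Lemma \ref{l2.1} then yield a convergent subsequence.

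Next I would install the homological link, using the two one-sided bounds of (C5)$'$ for its two ends. With the anisotropic dilations $B_\varrho z=(\varrho^{\hat\tau_1-1}p_1,\dots,\varrho^{\hat\sigma_n-1}q_n)$ (with $\hat\eta,\hat\sigma_i,\hat\tau_i$ as in Theorem \ref{5.3}) one has $\langle AB_\varrho z,B_\varrho z\rangle=\varrho^{\hat\eta-2}\langle Az,z\rangle$, the analogue of (\ref{3.1}). Using (C4), the $\limsup_{\varrho\to0^+}$ bound for $\hat B$ and the positivity of $\langle(A-B)\cdot,\cdot\rangle$ on $E^+$, I would choose $\mu\in\{\varrho_m\}$ small and $\delta>0$, independent of $m$, with $\inf_{B_\mu(S_m)}f_m\ge\delta$, just as in Lemma \ref{3.12}; using Lemma \ref{3.130}, the growth lower bound of (C2) (the analogue of (\ref{3.14}) recorded at the end of Section 3) and the $\liminf_{\varrho\to+\infty}$ bound for $\hat B$, I would choose $\nu\in\{\varrho_m\}$ large with $f_m|_{B_\nu(\partial Q_m)}\le0$, just as in Lemmas \ref{l3.170} and \ref{3.170}; that $B_\nu(\partial Q_m)$ and $B_\mu(S_m)$ homologically link is then exactly Lemma \ref{l3.170}. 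Lemma \ref{2.1} now produces a critical point $z_m$ of $f_m$ with $\delta\le f_m(z_m)\le C\nu^{\hat\eta}$ and $m(z_m)\le\dim X_m+1\le m^*(z_m)$; passing to the limit (using (C)$^*$) gives a critical point $z$ of $f$, and Lemma \ref{2.2} turns the Morse-index sandwich into $i_{k\tau}(z)\le n+1\le i_{k\tau}(z)+\nu_{k\tau}(z)$, exactly as in Theorem \ref{3.160}. The geometric-distinctness conclusions then follow verbatim from the proof of Corollary \ref{1.000}.

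The one genuinely new difficulty — the main obstacle — is precisely the loss of the exact identities $(\hat Bz,z)=2(\hat Bz,V_3(z))$ and $(\hat BB_\varrho z,B_\varrho z)=\varrho^{\hat\eta-2}(\hat Bz,z)$ that made $\frac{1}{2}\langle Bz,z\rangle$ behave like an exact anisotropically homogeneous term in Theorems \ref{t5.1} and \ref{5.3}. Everything comes down to checking that the three error terms created by this do not destroy the variational structure: in the (C)$^*$ argument (controlled by the smallness of $|\hat B-\hat BV_3-V_3\hat B|$ against $c_1\min_i\{c_{\varphi_i},c_{\psi_i}\}$, together with $\gamma\ge2\theta$), in the lower bound on the small sphere $B_\mu(S_m)$ (controlled by the $\limsup_{\varrho\to0^+}$ bound), and in the upper bound on the large boundary $B_\nu(\partial Q_m)$ (controlled by the $\liminf_{\varrho\to+\infty}$ bound). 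Once these three quantitative thresholds are in place, the index computation, the limit passage and the iteration argument are identical to the already-established cases, and the period restrictions $k,\,pk<2\pi/(\omega\tau)$ are exactly what keeps $A-B$ ``hyperbolic'' on $E^\pm$ throughout.
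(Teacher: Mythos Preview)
Your proposal is essentially correct and follows exactly the route the paper intends: the paper does not give a separate proof of this theorem but simply declares at the start of Section~4 that ``the proof of the following results are similar to that of Theorems \ref{1.20}, \ref{1.00} and Corollary \ref{1.000}, we only state the results,'' and the paragraph immediately preceding Theorem~\ref{5.11} (introducing the requirement $\gamma\ge 2\theta$ and the constants $c_\gamma$) already signals that the residual $\hat B$-term in the (C)$^*$ argument is to be absorbed precisely as you describe. One small slip: you write ``choose $\mu\in\{\varrho_m\}$'' and ``choose $\nu\in\{\varrho_m\}$'', but condition (C5)$'$ contains no such sequence --- that is the device of (H6) and (C5) --- and here the $\limsup$/$\liminf$ bounds hold for continuous $\varrho\to 0^+$ and $\varrho\to+\infty$, so $\mu$ and $\nu$ may be chosen freely (which only makes the linking construction easier).
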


\section{Minimal periodic solutions for the autonomous Hamiltonian systems}
\setcounter{equation}{0}

In this section, we consider the minimal periodic problem of the following autonomous Hamiltonian systems
\begin{equation}\label{5.1}
\left\{\begin{array}{ll}\dot z=JH'(z),\ \ \ \  z\in \mathbf{R}^{2n},\\
z(\tau)=z(0).\end{array}\right.
\end{equation}

We say that $(z,\tau)$ is a minimal periodic solution of \eqref{5.1} if $z$ solves the problem \eqref{5.1} with $\tau$ being the minimal period of $z$.

As shown in \cite{112}, we can also obtain minimal periodic solutions for the autonomous Hamiltonian systems \eqref{5.1}.

From Theorem \ref{3.160} and Remark \ref{7.1}, we see
that for any $\tau>0$ the Hamiltonian system (\ref{5.1}) possesses a nontrivial $\tau$-periodic solution $(z, \tau)$ satisfying $i_{\tau}(z)\leq n+1$
 provided the function $H$ satisfying the conditions in one of the {Theorems \ref{1.20}, \ref{1.00} and Corollary \ref{1.000}}
with some necessary modifications in an obvious way (since the function $H$ does not depend on time $t$). In the following result we understand the statements of the conditions in this sense.

\begin{theorem}\label{5.010}
Suppose the autonomous Hamiltonian function $H(z)$ satisfies the conditions
in one of the {Theorems \ref{1.20}, \ref{1.00} and Corollary \ref{1.000}} and

{(H7) $H''_{zz}(z)$ is strictly positive for every $z\in \mathbf{R}^{2n}\setminus \{0\}$.}

\noindent Then $(z, \tau)$ is a minimal periodic  solution of the nonlinear Hamiltonian  system (\ref{5.1}).
\end{theorem}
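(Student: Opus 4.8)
The plan is to argue by contradiction: suppose $(z,\tau)$ is the critical point produced by Theorem \ref{3.160} (applied to the autonomous system with the obvious modifications), but that $\tau$ is not the minimal period of $z$. Then $z$ has minimal period $\tau/k$ for some integer $k\geq 2$, so that $z$ is in fact a $\tau/k$-periodic solution. Writing $\bar z$ for $z$ regarded as a $(\tau/k)$-periodic function, we have $z = \bar z^{k}$ in the iteration sense, and the whole point is to compare the Maslov-type indices at the two periods using the iteration inequalities recalled in Section 2.

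The key steps, in order, would be: First, record what Theorem \ref{3.160} (via Remark \ref{7.1}) gives us at period $\tau$, namely $i_{\tau}(z)\leq n+1\leq i_{\tau}(z)+\nu_{\tau}(z)$; in particular $\nu_{\tau}(z)\geq 1$, so $z$ is a degenerate $\tau$-periodic solution. Second, use hypothesis (H7): since $B(t)=H''_{zz}(z(t))$ is strictly positive for all $t$ (the orbit avoids $0$ because the solution is nonconstant and $0$ is an equilibrium), Lemma \ref{2.121} applied at the minimal period $\tau/k$ yields $i_{\tau/k}(\bar z)\geq n$. Third, invoke Proposition \ref{2.10} (the basic iteration inequality) with the role of the base period played by $\tau/k$ and $k$ iterates giving period $\tau$: this gives $i_{\tau}(z)\geq k\big(i_{\tau/k}(\bar z)+\nu_{\tau/k}(\bar z)-n\big)-n$. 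Fourth, combine: if additionally $\nu_{\tau/k}(\bar z)\geq 1$ we would get $i_{\tau}(z)\geq k\cdot n - n = (k-1)n$, and since $i_{\tau}(z)\leq n+1$ this forces $(k-1)n\leq n+1$, hence $k\leq 2 + 1/n$, so $k\leq 2$ (for $n\geq 1$); one then needs to rule out $k=2$ separately, which is exactly the content of Lemma \ref{2.120}: from $i_{\tau}(z)\leq n+1$, $i_{\tau/k}(\bar z)\geq n$ and $\nu_{\tau/k}(\bar z)\geq 1$ one concludes $k=1$, a contradiction. So the cleanest route is to check the hypotheses of Lemma \ref{2.120} directly with base period $\tau/k$: $i_{k\cdot(\tau/k)}(\bar z)=i_{\tau}(z)\leq n+1$, $i_{\tau/k}(\bar z)\geq n$ by Lemma \ref{2.121}, and $\nu_{\tau/k}(\bar z)\geq 1$.

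The main obstacle is establishing $\nu_{\tau/k}(\bar z)\geq 1$, i.e. that the solution is degenerate already at its minimal period. We only know $\nu_{\tau}(z)\geq 1$ for free from Theorem \ref{3.160}. In the autonomous case the standard fact is that the generator $\dot z(t)$ of the $S^1$-action is itself a $\tau/k$-periodic solution of the linearized equation $-J\dot y = B(t)y$ (differentiate the equation $\dot z = JH'(z)$ in $t$), and $\dot z\not\equiv 0$ because $z$ is nonconstant; this shows $\nu_{\tau/k}(\bar z)\geq 1$ directly, without needing to pass through period $\tau$. I would make this the central lemma of the section. Once that is in hand, Lemma \ref{2.120} applies verbatim and yields $k=1$, contradicting $k\geq 2$; hence $\tau$ is the minimal period and $(z,\tau)$ is a minimal periodic solution. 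A small technical point to dispatch along the way: one must confirm that $z(t)\neq 0$ for all $t$ so that (H7) can be applied pointwise to $B(t)=H''_{zz}(z(t))$ on $[0,\tau/k]$ with strict positivity, which holds since any solution touching the equilibrium $0$ would be identically $0$ by uniqueness, contradicting nonconstancy.
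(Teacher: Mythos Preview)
Your proposal is correct and follows essentially the same route as the paper: assume the minimal period is $\tau/k$, use autonomy to get $\nu_{\tau/k}(z)\geq 1$ (via $\dot z$ in the kernel of the linearization), use (H7) together with Lemma \ref{2.121} to get $i_{\tau/k}(z)\geq n$, and then invoke Lemma \ref{2.120} with the bound $i_{\tau}(z)\leq n+1$ from Theorem \ref{3.160} to force $k=1$. Your write-up is in fact more careful than the paper's, which compresses the autonomy and strict-positivity verifications into a single sentence.
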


{Note that the Hamiltonian function $H$ of (\ref{1.2}) satisfies (H7).}

\textbf{Proof of Theorem \ref{5.010}.} The proof is almost the same as that in \cite{112}.
For readers' convenience, we estimate the iteration number of the solution $(z,\tau)$ now.

Assume $(z,\tau)$ has minimal period $\frac{\tau}{k}$, i.e., its iteration number is $k\in \mathbf Z$. Since the nonlinear Hamiltonian system in \eqref{5.1} is autonomous and (H7) holds,
we have $\nu_{\frac{\tau}{k}}(z)\geq 1$ and $i_{\frac{\tau}{k}}(z)\geq n$ by Lemma \ref{2.121}.
From Lemma \ref{2.120}, we see $k=1$, that is, the solution $(z,\tau)$ has minimal period $\tau$.\hfill$\Box$

In his pioneer work \cite{Rab1}, P. Rabinowitz proposed a conjecture on whether a superquadratic Hamiltonian system possesses a periodic solution with a prescribed minimal period. This conjecture has been deeply studied by many mathematicians. We refer to \cite{121, 2, FE1, FEQQ1, FKW, 112, 105} for the original Rabinowitz's conjecture under some further conditions (for example the convex case).  For the minimal periodic problem of brake solution of Hamiltonian systems, we refer to \cite{Liu, LC2, Zh2014}. For the minimal periodic problem of $P$-symmetric solution of Hamiltonian systems, we refer to \cite{Liu1, LT2}. Up to our knowledge, Theorem \ref{5.010} is the first result on the minimal periodic problem of nonlinear Hamiltonian systems with anisotropic growth.

\end{document}